\newcommand{\ie}{\textit{i.e.},~} 
\newcommand{\etc}{\textit{etc.}} 
\newcommand{\tick}{\ding{51}}
\newcommand{\cross}{\ding{55}}
\newcommand{\cC}{\mathcal{C}}
\newcommand{\cE}{\mathcal{E}}
\newcommand{\cF}{\mathcal{F}}
\newcommand{\cG}{\mathcal{G}}
\newcommand{\cM}{\mathcal{M}}
\newcommand{\cN}{\mathcal{N}}
\newcommand{\cO}{\mathcal{O}}
\newcommand{\cP}{\mathcal{P}}
\newcommand{\cS}{\mathcal{S}}
\newcommand{\cT}{\mathcal{T}}
\newcommand{\cV}{\mathcal{V}}
\newcommand{\RR}{\mathbb{R}}
\newcommand{\gs}{\sigma}
\newcommand{\ep}{\epsilon}
\newcommand\fF{\cT} 
\DeclareMathOperator{\sew}{Sew}
\def\rd{s}
\def\rr{r}
\newcommand{\Gale}[1]{{#1}^{\star}} 
\newcommand{\pGale}[1]{\left( {#1}\right)^{\star}} 
\def\ci{\cC} 
\def\co{\Gale\ci} 
\def\ve{\cV} 
\def\cov{\Gale\ve} 
\def\IG{\mathrm{IG}}
\newcommand{\veczero}{\boldsymbol{0}}
\newcommand{\conv}{\mathrm{conv}}
\newcommand{\verts}{\mathrm{vert}}
\newcommand{\rank}{\mathrm{rank}}
\newcommand{\sprod}[2]{\langle {#1} , {#2} \rangle}
\newcommand{\defn}[1]{\emph{#1}}
\newcommand{\set}[2]{\ensuremath{\left\{#1\,\middle|\,#2\right\}}} 
\newcommand{\ffloor}[2]{\left\lfloor{\frac{#1}{#2}}\right\rfloor}
\newcommand{\fceil}[2]{\left\lceil {\frac{#1}{#2}} \right\rceil}
\newcommand{\rst}[1]{\ensuremath{{\mathbin\upharpoonright}\raise-.5ex\hbox{$#1$}}} 
\def\restriction#1#2{\mathchoice
              {\setbox1\hbox{${\displaystyle #1}_{\scriptstyle #2}$}
              \restrictionaux{#1}{#2}}
              {\setbox1\hbox{${\textstyle #1}_{\scriptstyle #2}$}
              \restrictionaux{#1}{#2}}
              {\setbox1\hbox{${\scriptstyle #1}_{\scriptscriptstyle #2}$}
              \restrictionaux{#1}{#2}}
              {\setbox1\hbox{${\scriptscriptstyle #1}_{\scriptscriptstyle #2}$}
              \restrictionaux{#1}{#2}}}
\def\restrictionaux#1#2{{#1\,\smash{\vrule height .8\ht1 depth .85\dp1}}_{\,#2}} 
\newcommand{\cyc}[2]{C_{#1}({#2})} 
\newcommand{\lnei}[2]{\operatorname{nb}_l({#1,#2})}
\newcommand{\lpol}[2]{\operatorname{p}_l({#1,#2})}
\newcommand{\nnei}[2]{\operatorname{nb}({#1,#2})}
\newcommand{\lle}[2]{\ell_l({#1,#2})}
\newcommand{\lnr}[2]{\operatorname{nr}_l({#1,#2})}
\def\e{\mathrm{e}}
\newtheorem{theorem}{Theorem}[section] 
\newtheorem{proposition}[theorem]{Proposition} 
\newtheorem{lemma}[theorem]{Lemma} 
\newtheorem{corollary}[theorem]{Corollary}
\newtheorem{question}{Question}
\theoremstyle{remark}
\newtheorem{remark}[theorem]{Remark}
\newtheorem{observation}[theorem]{Observation}
\newtheorem{example}[theorem]{Example}
\theoremstyle{definition}
\newtheorem{definition}[theorem]{Definition} 
\newtheorem{constr}{Construction}
\begin{document}

\title{Many neighborly polytopes and oriented matroids}

\author{Arnau Padrol}

\address{Institut f\" ur Mathematik, FU Berlin, Arnimallee 2, 14195 Berlin, Germany}
\email{arnau.padrol@fu-berlin.de}
\thanks{
This research was supported by the DFG Collaborative Research Center SFB/TR~109 ``Discretization in Geometry and Dynamics'' as well as by AGAUR grant 2009 SGR 1040 and FI-DGR grant from Catalunya's government and the ESF 
}

\begin{abstract}
In this paper we present a new technique to construct neighborly polytopes, and use it to prove a lower bound of $\left.{\left(\left( r+d \right) ^{\left( \frac{r}{2}+\frac{d}{2} \right) ^{2}}\right)}\middle/{\left({r}^{{(\frac{r}{2})}^{2}}{d}^{{(\frac{d}{2})}^{2}}{\e^{3\frac{r}{2}\frac{d}{2}}}\right)}\right.$ for the number of combinatorial types of vertex-labeled neighborly polytopes in even dimension $d$ with $r+d+1$ vertices.
This improves current bounds on the number of combinatorial types of polytopes.

The previous best lower bounds for the number of neighborly polytopes were found by Shemer in 1982 using a technique called the \emph{Sewing Construction}. We provide a simpler proof that sewing works, and generalize it to oriented matroids in two ways: to \emph{Extended Sewing} and to \emph{Gale Sewing}. Our lower bound is obtained by estimating the number of polytopes that can be constructed via Gale Sewing. Combining both new techniques, we are also able to construct many non-realizable neighborly oriented matroids. \keywords{neighborly polytope, oriented matroid, Sewing Construction, lexicographic extension}
\end{abstract}

\maketitle

\section{Introduction}
\label{intro}
A polytope is said to be \emph{$k$-neighborly} if every subset of vertices of size at most~$k$ is the set of vertices of one of its faces. It is easy to see that if a $d$-polytope is $k$-neighborly for any $k>\ffloor{d}{2}$, then it must be the $d$-dimensional simplex $\Delta_d$. This is why a $d$-polytope is called \emph{neighborly} if it is $\ffloor{d}{2}$-neighborly. Analogously, an (acyclic) oriented matroid of rank $r$ is called \emph{neighborly} if every $\ffloor{r-1}{2}$ elements form a face (see~\cite[Chapter~9]{OrientedMatroids1993}). 

Neighborly polytopes form a very interesting family of polytopes because of their extremal properties. In particular, McMullen's {Upper Bound Theorem}~\cite{McMullen1970} states that the number of $i$-dimensional faces of a $d$-polytope $P$ with $n$ vertices is maximal for simplicial neighborly polytopes, for all $i$.
Any set of $n$ points on the \defn{moment curve} in $\RR^d$, $\{(t,t^2,\dots,t^d):t\in \RR\}$, is the set of vertices of a neighborly polytope.
Since the combinatorial type of this polytope does not depend on the particular choice of points (see~\cite[Section~4.7]{GruenbaumEtal2003}), we denote it as $\cyc{d}{n}$\index{$\cyc{d}{n}$}, the \defn{cyclic polytope} with $n$ vertices in $\RR^d$. 

The first examples of non-cyclic neighborly polytopes were found in 1967 by Gr\"unbaum~\cite[Section 7.2]{GruenbaumEtal2003}.
In 1981, Barnette introduced the \defn{facet splitting} technique~\cite{Barnette1981}, that allowed him to construct infinitely many neighborly polytopes, and to prove that \defn{$\nnei{n}{d}$}, the number of (combinatorial types of) neighborly $d$-polytopes with $n$ vertices, is bigger than \[\nnei{n}{d}\geq\frac{(2n-4)!}{n!(n-2)!\binom{n}{d-3}}\sim4^{n(1+o(1))}.\]
(Here and below, the asymptotic notation $o(1)$ refers to fixed~$d$ and $n\rightarrow \infty$.)
  
This bound was improved by Shemer in~\cite{Shemer1982}, where he introduced the \defn{Sewing Construction} to build an infinite family of neighborly polytopes in any even dimension. 
Given a neighborly $d$-polytope with $n$ vertices and a suitable flag of faces, one can ``sew'' a new vertex onto it to get a new neighborly $d$-polytope with $n+1$ vertices. With this construction, Shemer proved that $\nnei{n}{d}$ is greater than \[\nnei{n}{d}\geq\frac{1}{2}\left(\left(\frac{d}{2}-1\right)\ffloor{n-2}{d+1}\right)!\sim n^{c_d n(1+o(1))},\] where $c_d\rightarrow \frac{1}{2}$ when $d\rightarrow\infty$.

The main result of this paper is the following theorem, proved in Section~\ref{sec:counting}, that provides a new lower bound for $\lnei{n}{d}$, the number of \emph{vertex-labeled} combinatorial types of neighborly polytopes with $n$ vertices and dimension~$d$.

\medskip
\noindent\textbf{Theorem \ref{thm:lblnei}}
\emph{The number of labeled neighborly polytopes in even dimension $d$ with $r+d+1$ vertices fulfills}
\begin{equation}\label{eq:thebound}
\tag{$\bigstar$}
\lnei{r+d+1}{d}\geq \frac{\left( r+d \right) ^{\left( \frac{r}{2}+\frac{d}{2} \right) ^{2}}}{{r}^{{(\frac{r}{2})}^{2}}{d}^{{(\frac{d}{2})}^{2}}{{\e}^{3\frac{r}{2}\frac{d}{2}}}}. \end{equation}

\medskip
This bound is always greater than 
\[
    \lnei{n}{d}\geq  \left( \frac{n-1}{\e^{3/2}}\right)^{\frac12 d(n-d-1)}\sim n^{\frac{dn}{2}(1+o(1))},
\]
and dividing by $n!$  easily shows this to improve Shemer's bound also in the unlabeled case. Moreover, when $d$ is odd we can use the bound $\lnei{r+d+1}{d}\geq \lnei{r+d}{d-1}$, which follows by taking pyramids (cf. Corollary~\ref{cor:oddneighpoly}).

Of course, \eqref{eq:thebound} is also a lower bound for \defn{$\lpol{n}{d}$}\index{$\lpol{n}{d}$}, the number of combinatorial types of vertex-labeled $d$-polytopes with $n$ vertices, and is even greater than
\[\lpol{n}{d}\geq \left(\frac{n-d}{d}\right)^{\frac{nd}{4}},\]
which is, as far as the author knows, the current best lower bound for $\lpol{n}{d}$ (valid only for $n\geq 2d$). This bound was found by Alon in 1986~\cite{Alon1986}.

\begin{remark}
To the best of the author's knowledge, the only known upper bounds for $\lnei{n}{d}$ are the upper bounds for $\lpol{n}{d}$. 
Alon proved in~\cite{Alon1986} that 
\[\lpol{n}{d}\leq \left(\frac{n}{d} \right)^{d^2n(1+o(1))}\text{ when }\tfrac{n}{d}\rightarrow \infty.\]
improving a similar bound for simplicial polytopes due to
Goodman and Pollack~\cite{GoodmanPollack1986}
\end{remark}

\medskip
We can summarize the main contributions of this paper as follows.

\begin{enumerate}
\item First, we show that Shemer's Sewing Construction can be very transparently explained (and generalized) in terms of \emph{lexicographic extensions} of oriented matroids (Section~\ref{sec:shemer}). In fact, the same framework also explains Lee \& Menzel's related construction of $A$-sewing for non-simplicial polytopes~\cite{LeeMenzel2010} (Observation~\ref{obs:Asewing}), and the results in~\cite{TrelfordVigh2011} on faces of sewn polytopes. Moreover, it naturally applies also to odd dimension just like Bistriczky's version of the Sewing Theorem~\cite{Bisztriczky2000}.

\item Next, we introduce two new construction techniques for polytopes. The first, \defn{Extended Sewing} (Construction~\ref{constr:cE}) is based on our Extended Sewing Theorem~\ref{thm:extshemersewing}. 
It is a generalization of Shemer's sewing to oriented matroids that is valid for any rank and works for a large family of flags of faces (suggested in \cite[Remark 7.4]{Shemer1982}), including the ones obtained by Barnette's facet splitting~\cite{Barnette1981}. 
Moreover, Extended Sewing is optimal in the sense that in odd ranks, the flags of faces constructed in this way are the only ones that yield neighborly polytopes (Proposition~\ref{prop:uniqueflags}).

\item Our second (and most important) new technique is \emph{Gale Sewing} (Construction~\ref{constr:cG}), whose key ingredient is the Double Extension Theorem~\ref{thm:thethm}. It lexicographically extends \emph{duals} of neighborly polytopes and oriented matroids.  With it, we construct a large family of polytopes called $\cG$.
This family contains all the neighborly polytopes constructed in~\cite{Devyatov2011}, which arise as a special case of Gale Sewing for polytopes of corank~$3$.

\item Using Extended Sewing, we construct three families of neighborly polytopes --- $\cS$, $\cE$ and $\cO$ --- the largest of which is $\cO$. In Section~\ref{sec:comparing}, we show that $\cO\subseteq \cG$ (Corollary~\ref{cor:cOsubsetcG}), and in this sense, Gale Sewing is a generalization of Extended Sewing. However, it is not true that the Double Extension Theorem~\ref{thm:thethm} generalizes the Extended Sewing Theorem~\ref{thm:extshemersewing} (cf. Remark~\ref{rmk:doesnotgeneralize}).

\item The bound \eqref{eq:thebound} is obtained in Theorem~\ref{thm:lblnei} by estimating the number of different polytopes in $\cG$. 
\item To tie our constructions together, we show that combining Extended Sewing and Gale Sewing yields non-realizable neighborly oriented matroids with $n$~vertices and rank~$\rd$ for any $\rd\geq 5$ and $n\geq \rd+5$ (Theorem~\ref{thm:nonrealizable}). Even more, in Theorem~\ref{thm:nonrealizablebound} we show that lower bounds proportional to \eqref{eq:thebound} also hold for the number of labeled non-realizable neighborly oriented matroids.
\end{enumerate}

\begin{observation}
Sanyal and Ziegler proved  that the number 
of neighborly simplicial $(d - 2)$-polytopes on $n - 1$ vertices is a lower bound for the number of $d$-dimensional neighborly cubical polytopes with $2^n$ vertices~\cite[Corollary 3.8]{SanyalZiegler2010}. Hence, \eqref{eq:thebound} also yields lower bounds the number of neighborly cubical polytopes.
\end{observation}

\begin{observation}
It can be proven that all the polytopes that belong to $\cG$ are inscribable, that is, that they can be realized with all their vertices on a sphere~\cite{GonskaPadrol}. Hence, \eqref{eq:thebound} is also valid as a lower bound for the number of inscribable neighborly polytopes and for the number of neighborly Delaunay triangulations 
(see also Remark~\ref{rmk:inscribable}).
\end{observation}

We present our results after the introductory Section~\ref{sec:defs}, which may be skimmed with the exception of the statement of Proposition~\ref{prop:allquotientsofle}.
The proof of this and some smaller results are relegated to Appendix~\ref{sec:appendix} so as not to interrupt the flow of reading. The presentation of Extended Sewing and Gale Sewing is mostly independent, and hence a reader interested only in the the proof of the lower bound~\eqref{eq:thebound} can skip Sections~\ref{sec:shemer} and~\ref{sec:comparing} and concentrate on Sections~\ref{sec:thethm} and~\ref{sec:counting}.

\section{Neighborly and balanced Oriented Matroids}
\label{sec:defs}
We assume that the reader has some familiarity with the basics of oriented matroid theory; we refer to~\cite{OrientedMatroids1993} for a comprehensive reference. 
\subsection{Preliminaries}

As for notation, $\cM$ will be an oriented matroid of rank $\rd$ on a ground set~$E$, with circuits $\ci(\cM)$, cocircuits $\co(\cM)$, vectors $\ve(\cM)$ and covectors $\cov(\cM)$. Its dual $\Gale\cM$ has rank $r=n-\rd$. $\cM$ is \defn{uniform} if the underlying matroid $\underline \cM$ is uniform, that is, every subset of size $\rd$ is a basis.

We view every vector/covector $X$ of $\cM$ as a function from $E$ to $\{+,-,0\}$ (or to $\{\pm 1,0\}$). Hence, we will say $X(e)=+$ or $X(e)>0$. The \defn{support} $\underline X\subset E$ of a vector/covector $X$ is $\underline X=\set{e\in E}{X(e)\neq 0}$, and we say that a vector $X$ is \defn{positive} if $X(e)\geq 0$ for all $e\in E$. 

We say that two oriented matroids $\cM_1$ and $\cM_2$ on respective ground sets $E_1$ and $E_2$ are \defn{isomorphic}, $\cM_1\simeq\cM_2$, when there is a bijection between $E_1$ and $E_2$ that sends circuits of $\cM_1$ to circuits of $\cM_2$ (and equivalently for vectors, cocircuits or covectors) in such a way that the signs are preserved.

A matroid 
$\cM$ is \emph{acyclic} if the whole ground set is the support of a positive covector. Its \emph{facets} are the complements of the supports of its positive cocircuits, and its \emph{faces} the complements of its positive covectors. Faces of rank $1$ are called \emph{vertices} of $\cM$. In particular, every $d$-polytope is an acyclic matroid of rank $d+1$.
Similarly, a matroid is \emph{totally cyclic} if the whole ground set is the support of a positive vector. 

We will need some constructions to deal with an oriented matroid $\cM$, in particular the \emph{deletion} $\cM\setminus e$ and the \emph{contraction} $\cM/e$ of an element $e$. They are defined by their covectors (by $\restriction{C}{E\setminus\{e\}}$ we denote the restriction of $C$ to $E\setminus\{e\}$):
\begin{align*}
 \cov(\cM\setminus e)&=\set{\restriction{C}{E\setminus\{e\}}}{C\in \cov(\cM)},\\
 \cov(\cM\,/\,e)&=\set{\restriction{C}{E\setminus\{e\}}}{C\in \cov(\cM)\text{ such that }C(e)=0}.
\end{align*}

Deletion and contraction are dual operations ---\, $\pGale{\cM\setminus e}=\left(\Gale{\cM}/e\right)$ \,--- that commute ---\, $\left(\cM\setminus p\right)/q= \left(\cM/q\right)\setminus p$ \,--- and naturally extend to subsets $S\subseteq E$ by iteratively deleting (resp. contracting) every element in $S$.\\

To illustrate our results, we use \emph{affine Gale diagrams}, which are described in detail in~\cite[Chapter~6]{LecturesOnPolytopes1995} or~\cite{Sturmfels1988b}. They turn a labeled vector configuration $V=\{v_1,\dots,v_n\}\subset\RR^r$ (for simplicity we assume that no $v_i$ is $\veczero$) into a labeled affine point configuration $A=\{a_1,\dots,a_n\}\subset\RR^{r-1}$. For this, take a vector $c\in\RR^{r}$ such that $\sprod{v_i}{c}$ is not $0$ for any $v_i$ (here $\sprod{}{}$ denotes the standard scalar product). Then $A$ is the point configuration in the hyperplane with equation $\sprod{x}{c}=1$ consisting of the points $a_i:=\frac{v_i}{\sprod{v_i}{c}}$ for $v_i\in V$. We call $a_i$ a \defn{positive point} if $\sprod{v_i}{c}>0$, and a \defn{negative point} if $\sprod{a_i}{c}<0$. In our figures, positive points are depicted as full circles and negative points are empty circles. See the example of Figure~\ref{fig:affineGale}.

\begin{figure}[htpb]
\centering
\includegraphics[width=.45\linewidth]{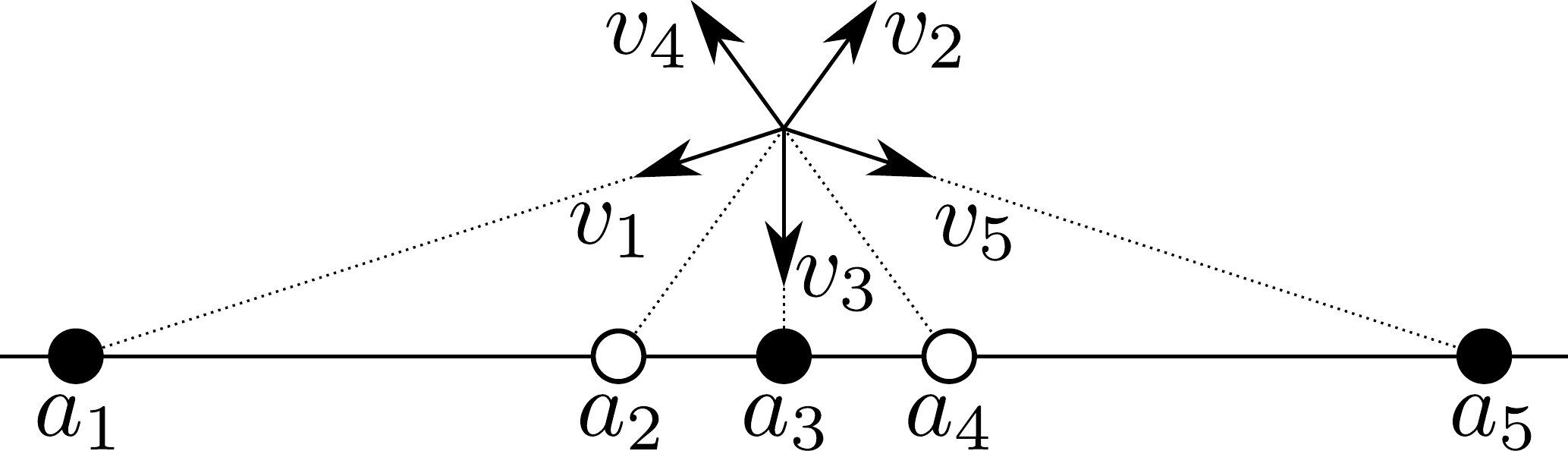}
\caption{An affine Gale diagram in $\RR^1$ from a vector configuration in $\RR^2$.}
\label{fig:affineGale}
\end{figure}

\subsection{Neighborly and balanced oriented matroids}

As we have already mentioned, neighborliness is a purely combinatorial concept that can be easily defined in terms of oriented matroids. 
\begin{definition}\label{def:neigh}
An oriented matroid $\cM$ of rank $\rd$ on a ground set~$E$ is \emph{neighborly} if
every subset $S\subset E$ of size at most $\ffloor{\rd-1}{2}$ is a face of~$\cM$. That is, there exists a covector $C\in\co(\cM)$ with $C(e)=0$ for $e\in S$ and $C(e)=+$ otherwise. 
\end{definition}

Thus, realizable neighborly oriented matroids correspond to neighborly polytopes. However, not all neighborly oriented matroids are realizable (see Section~\ref{sec:nonrealizable}).
Nevertheless, several properties of neighborly polytopes extend to all neighborly oriented matroids (cf.~\cite{CordovilDuchet2000} and~\cite{Sturmfels1988}).

An important property of neighborly matroids of odd rank (in the realizable case, neighborly polytopes of even dimension) is that they are rigid. We call an oriented matroid \emph{rigid} if there is no other oriented matroid that has its face lattice; equivalently, if the face lattice determines its whole set of covectors. This result was first discovered by Shemer for neighborly polytopes~\cite{Shemer1982} and later extended to all neighborly oriented matroids  by Sturmfels~\cite{Sturmfels1988}.

\begin{theorem}[{\cite[Theorem 4.2]{Sturmfels1988}}]\label{thm:neigharerigid}
 Every neighborly oriented matroid of odd rank is rigid.
\end{theorem}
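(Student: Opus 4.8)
The plan is to show that the labelled face lattice of a neighborly oriented matroid $\cM$ of rank $2m+1$ determines all of its cocircuits; since an oriented matroid is determined by its set of cocircuits, this is exactly rigidity. A first observation is that such an $\cM$ must be \emph{uniform}, with every circuit balanced ($|Z^{+}|=|Z^{-}|=m+1$): if a circuit $Z$ had $|Z^{+}|\le m$ then $Z^{+}$ would be a face by neighborliness, contradicting the fact that the positive part of a circuit of an acyclic matroid is never a face (apply the criterion for faces to the circuit $Z$ itself, using $Z^{-}\neq\emptyset$); hence $|Z^{+}|,|Z^{-}|\ge m+1$, and since $|\underline Z|\le\rank\cM+1=2m+2$ equality holds throughout, forcing uniformity. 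Consequently the underlying matroid, and with it the family of hyperplanes (all $2m$-subsets) and of circuits (all $(2m+2)$-subsets), is common to every oriented matroid sharing the face lattice of $\cM$; and since each hyperplane $H$ is the zero set of a unique cocircuit $C_{H}$ up to sign, the task reduces to recovering the partition $\{C_{H}^{+},C_{H}^{-}\}$ of $E\setminus H$ for each $2m$-subset $H$.

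Two elementary facts turn this into a question about circuits and faces alone. First, by circuit--cocircuit orthogonality the support of $C_{H}$ meets the circuit $W:=H\cup\{p,q\}$ (for distinct $p,q\notin H$) exactly in $\{p,q\}$, so $C_{H}(p)=C_{H}(q)$ if and only if $p$ and $q$ lie in opposite blocks of the Radon partition of $W$; thus recovering all cocircuits is equivalent to recovering, for every $(2m+2)$-subset $W$, its Radon partition $\{W^{+},W^{-}\}$. Second, the criterion for faces together with $|Z^{+}|=m+1$ shows that an $(m+1)$-subset fails to be a face of $\cM$ exactly when it is one of the two Radon blocks of some circuit; in particular, for a single circuit $W$ the restriction $\cM|_{W}$ is a uniform acyclic matroid whose only $(m+1)$-subsets that are not faces are $W^{+}$ and $W^{-}$. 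So it would be enough to show that the labelled face lattice of every single-element deletion $\cM\setminus p$ is recoverable from that of $\cM$, and then induct on $|E|$: when $|E|>2m+2$ each circuit $W$ avoids some $p$, remains a circuit with the same Radon partition in the neighborly rank-$2m+1$ matroid $\cM\setminus p$, and so its Radon partition is determined by induction. The base case $|E|=2m+2$ is immediate, since then $\cM$ has a single circuit and the unique complementary pair of $(m+1)$-subsets that are not faces is precisely the signed circuit up to sign.

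The main obstacle is this deletion step, because removing a vertex genuinely creates new faces (already for polygons, a short diagonal becomes an edge), so the faces of $\cM\setminus p$ are not just the faces of $\cM$ avoiding $p$. The plan is to run the beneath-and-beyond description in reverse -- this is exactly the inverse of the sewing-type constructions treated in the rest of the paper: the face lattice of the contraction $\cM/p$, the link of $p$, is the upper interval above $p$ in the face lattice of $\cM$, and together with the faces of $\cM$ not containing $p$ it should pin down which faces of $\cM\setminus p$ disappear when $p$ is re-added, hence the whole face lattice of $\cM\setminus p$. Neighborliness enters essentially here: every subset of size $\le m$ is a face of both $\cM$ and $\cM\setminus p$, so all the faces that disappear have dimension $\ge m$, and combined with uniformity (all circuits balanced of type $(m+1,m+1)$) this is what forces the reconstruction; it is also the precise point where the \emph{oddness} of the rank is used, through the balance $|Z^{+}|=|Z^{-}|$. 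Carrying out this last step without circularity is the delicate part; granting it, the induction closes and recovers every circuit of $\cM$, hence every cocircuit, hence $\cM$.
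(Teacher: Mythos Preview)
The paper does not prove this theorem; it is quoted from \cite{Sturmfels1988} and used as a black box, so there is no in-paper argument to compare against.

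Your preliminary reductions are correct: uniformity, $|X^{+}|=|X^{-}|=m+1$ for every circuit, the equivalence between recovering all cocircuits and recovering the Radon partition of every $(2m{+}2)$-subset, and the base case $|E|=2m+2$. But the deletion step you isolate is not a lemma on the way to rigidity --- together with your base case and induction it is \emph{equivalent} to rigidity, so you have reformulated the theorem rather than reduced it. Concretely, the facets of $\cM\setminus p$ that are not already facets of $\cM$ are exactly the hyperplanes $F\subseteq E\setminus\{p\}$ for which $p$ sits alone on one side; recognizing such an $F$ amounts to checking that every missing face $S\subseteq F$ has the property ``every circuit $X$ with $X^{+}=S$ satisfies $p\in X^{-}$'', and that is precisely the Radon-partition information you are trying to extract. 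Your beneath--beyond reversal pins down only the \emph{boundary} of the ball of new facets (namely the link of $p$), not its interior, and you give no argument that neighborliness forces a unique filling --- a priori, two oriented matroids with the same boundary complex could fill that ball differently. As written, the proposal is a plan whose hardest step is explicitly left open, and that step is not visibly easier than the theorem itself.
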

\smallskip

Definition~\ref{def:neigh} is based on the presentation by cocircuits, but neighborly matroids can also be characterized by their circuits. Said differently, one can characterize dual-to-neighborly matroids in terms of cocircuits. These are balanced matroids.

\begin{definition}\label{def:balanced}
An oriented matroid $\cM$ of rank $\rr$ and $n$ elements is \defn{balanced} if every cocircuit~$C$ of $\cM$ is balanced; and a cocircuit~$C\in\co(\cM)$ is \defn{balanced} when 
 \[\ffloor{n-\rr+1}{2}\leq |C^+|\leq\fceil{n-\rr+1}{2}.\]
where $C^+=\set{e\in E}{C(e)=+}$.
\end{definition}
These cocircuits (and matroids) are called balanced because of the fact that in a uniform oriented matroid, a cocircuit is balanced if and only if it has the same number of positive and negative elements ($\pm 1$ if the corank is odd).

That neighborliness and balancedness are dual concepts is already implicit in the work of Gale~\cite{Gale1963} for the case of polytopes, and one can find a proof for oriented matroids by Sturmfels in~\cite{Sturmfels1988}.

\begin{proposition}[{\cite[Proposition 3.2]{Sturmfels1988}}]
 An oriented matroid $\cM$ is neighborly if and only if its dual matroid $\Gale{\cM}$ is balanced.
\end{proposition}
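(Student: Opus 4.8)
The plan is to translate each of the two properties into a single condition on the \emph{circuits} of $\cM$, which are exactly the cocircuits of $\cM^{*}$, and then to check that the two translations are literally the same. Write $d=\rank(\cM)$, so that $\rank(\cM^{*})=n-d$ and hence $n-\rank(\cM^{*})+1=d+1$. Unwinding the definition, ``$\cM^{*}$ is balanced'' says exactly that every circuit $X$ of $\cM$ satisfies $\ffloor{d+1}{2}\le|X^{+}|\le\fceil{d+1}{2}$; since $-X$ is again a circuit, this is the same as requiring every circuit $X$ to satisfy $\ffloor{d+1}{2}\le|X^{+}|\le\fceil{d+1}{2}$ and $\ffloor{d+1}{2}\le|X^{-}|\le\fceil{d+1}{2}$. (If $\cM$ has no circuits at all then $\underline\cM$ is free, $\cM$ is a simplex, $\cM^{*}$ has no cocircuits, and both properties hold trivially; so we assume henceforth that $\cM$ has a circuit and that $d\ge1$.)

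On the neighborly side I would invoke the Farkas-type description of faces of an acyclic oriented matroid (a standard fact from oriented matroid programming, see~\cite{OrientedMatroids1993}): for $R\subseteq E$, the set $R$ is a face of $\cM$ --- \ie some covector $C$ has $C^{0}=R$ and $C^{-}=\emptyset$ --- if and only if $\cM$ has no circuit $X$ with $X^{-}\subseteq R$ and $X^{+}\not\subseteq R$. One direction is immediate from circuit--cocircuit orthogonality: a nonnegative covector $C$ with $C^{0}=R$ cannot be orthogonal to a circuit $X$ with $X^{-}\subseteq R=C^{0}$ and $X^{+}\not\subseteq R$, since such an $X$ is nonnegative on $\underline C=E\setminus R$ and meets it. The other direction is the substantive Farkas step, obtained by extracting a conformal circuit from a hypothetical certificate.

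Using this, I claim that $\cM$ is neighborly if and only if $\cM$ has no circuit $X$ with $|X^{-}|\le\ffloor{d-1}{2}$. Indeed, if $\cM$ is neighborly then $\emptyset$ is a face, so $\cM$ is acyclic and every circuit has $X^{+}\ne\emptyset$; were there a circuit $X$ with $|X^{-}|\le\ffloor{d-1}{2}$, then $R:=X^{-}$ would be a set of size $\le\ffloor{d-1}{2}$ that is not a face (the circuit $X$ witnesses this, as $X^{+}\ne\emptyset$ is disjoint from $R$), contradicting neighborliness. Conversely, if $\cM$ has no circuit with $|X^{-}|\le\ffloor{d-1}{2}$ then in particular it has no positive circuit, hence is acyclic, and every $R$ with $|R|\le\ffloor{d-1}{2}$ is a face, because a witnessing circuit would have $|X^{-}|\le|R|\le\ffloor{d-1}{2}$. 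Since the set of circuits is closed under negation and $\ffloor{d-1}{2}+1=\ffloor{d+1}{2}$, this condition is equivalent to: every circuit $X$ of $\cM$ has $|X^{+}|\ge\ffloor{d+1}{2}$ and $|X^{-}|\ge\ffloor{d+1}{2}$.

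It remains to close the loop. A circuit of the rank-$d$ matroid $\underline\cM$ has at most $d+1$ elements, so the two lower bounds above automatically give $|X^{+}|=|\underline X|-|X^{-}|\le(d+1)-\ffloor{d+1}{2}=\fceil{d+1}{2}$, and likewise for $|X^{-}|$. Hence the circuit condition equivalent to neighborliness is exactly the one equivalent to balancedness of $\cM^{*}$ isolated in the first paragraph, which proves the proposition. (When starting from the assumption that $\cM^{*}$ is balanced, one first notes that this already forces $\cM$ to be acyclic --- a positive circuit $X$ of $\cM$ would give a cocircuit $-X$ of $\cM^{*}$ with $|(-X)^{+}|=0<\ffloor{d+1}{2}$, violating the halving bound for $d\ge1$ --- so that ``face'' makes sense on the $\cM$-side.) The main obstacle is the hard direction of the Farkas-type face characterization; a minor point to phrase carefully is that ``$R$ is a face'' means $R$ is \emph{exactly} the zero set of a nonnegative covector, not merely contained in a face, but this is precisely what the cited characterization provides for every $R$.
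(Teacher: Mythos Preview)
The paper does not prove this proposition; it is quoted as \cite[Proposition 3.2]{Sturmfels1988} and used as a black box. So there is no ``paper's own proof'' to compare against --- you have supplied an argument where the paper gives none.

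Your argument is correct and is essentially the standard one (indeed, it is close to Sturmfels's original). The translation of ``$\cM^{*}$ balanced'' into the circuit bounds $\ffloor{d+1}{2}\le|X^{\pm}|\le\fceil{d+1}{2}$ is right, the identity $\ffloor{d-1}{2}+1=\ffloor{d+1}{2}$ and the circuit-size bound $|\underline X|\le d+1$ close the loop as you say, and your handling of acyclicity on both sides is clean. The one substantive external input is the Las~Vergnas/Farkas characterization of faces (a subset $R$ is a face of an acyclic oriented matroid iff no circuit $X$ has $X^{-}\subseteq R$ and $X^{+}\not\subseteq R$); this is exactly \cite[Proposition 9.1.2]{OrientedMatroids1993}, so your citation is appropriate. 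One small remark worth making explicit: that characterization already forces $R$ to be a flat (if $e\in\operatorname{cl}(R)\setminus R$, the circuit through $e$ supported in $R\cup\{e\}$ violates the condition), so your concern in the last sentence about ``exactly the zero set'' is automatically taken care of and need not be flagged as a subtlety.
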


\subsection{Single element extensions}

Let $\cM$ be an oriented matroid on a ground set $E$. A \emph{single element extension} of~$\cM$ by an element $p$ is an oriented matroid $\tilde \cM$ on the ground set $E\cup \{p\}$ for some $p\notin E$, such that $\cM$ is the deletion $\tilde\cM\setminus p$.
We will only consider extensions that do not increase the rank, \ie $\rank(\tilde \cM)=\rank(\cM)$.

A concept crucial to understanding a single element extension of $\cM$ is its signature, which we define in the following proposition (cf.~\cite[Proposition 7.1.4]{OrientedMatroids1993}). 

\begin{proposition}(\cite[Proposition 7.1.4]{OrientedMatroids1993},\cite{LasVergnas1978})
Let $\tilde \cM$ be a single element extension of~$\cM$ by $p$. Then, for every cocircuit $C\in\co(\cM)$, there is a unique way to extend~$C$ to a cocircuit of~$\tilde \cM$. 

That is, there is a unique function $\gs$ from $\co(\cM)\rightarrow \{+,-,0\}$ such that for each $C\in \co(\cM)$ there is a cocircuit $C'\in\co(\tilde\cM)$ with $C'(p)=\gs(C)$ and $C'(e)=C(e)$ for $e\in E$. The function $\gs$ is called the \emph{signature} of the extension.

Moreover, the signature $\gs$ uniquely determines the oriented matroid $\tilde\cM$.
\end{proposition}

Although not every map from $\co(\cM)$ to $\{0,+,-\}$ corresponds to the signature of an extension (see~\cite[Proposition 7.1.8]{OrientedMatroids1993}), we will only work with one specific  family of single element extensions called \emph{lexicographic extensions}.

\begin{definition}\label{def:le}
Let $\cM$ be a rank~$\rr$ oriented matroid on a ground set~$E$. Let $(a_1,a_2,\dots, a_k)$ be an ordered subset of $E$ and let $(\ep_1,\ep_2,\dots,\ep_k)\in\{+,-\}^k$ be a sign vector. The \emph{lexicographic extension} $\cM[p]$ of $\cM$ by $p=[a_1^{\ep_1},a_2^{\ep_2},\dots,a_k^{\ep_k}]$ is the oriented matroid on the ground set $E\cup \{p\}$ which is the single element extension of $\cM$ whose signature $\sigma:\co(\cM)\rightarrow \{+,-,0\}$ maps $C\in \co(\cM)$ to
\begin{equation*}
\gs(C)\mapsto 
\begin{cases} \ep_iC({a_i})& \text{if $i$ is minimal with $C({a_i})\neq 0$,}
\\
0&\text{if $C({a_i})=0$ for $i=1,\dots,k$.}
\end{cases}
\end{equation*}
We will also use $\cM[a_1^{\ep_1},\dots,a_k^{\ep_k}]$ to denote the lexicographic extension $\cM[p]$ of $\cM$ by $p=[a_1^{\ep_1},\dots,a_k^{\ep_k}]$.
\end{definition}

\begin{remark}
If $\cM$ is a uniform matroid of rank $\rr$, then $\cM[a_1^{\ep_1},\dots,a_k^{\ep_k}]$ is uniform if and only if $k\geq \rr$. In this situation, the $a_i^{\ep_i}$ with $i>\rr$ are irrelevant, so we can assume that $k=\rr$.
This is the most interesting case for us.
\end{remark}

An important property is that lexicographic extensions preserve realizability (cf. \cite[Section 7.2]{OrientedMatroids1993}).

\begin{lemma}\label{lem:realizablele}
$\cM[p]$ is realizable if and only if $\cM$ is realizable.
\end{lemma}
In the setting of a vector configuration $V$, the lexicographic extension by $p=[a_1^{\ep_1},a_2^{\ep_2},\dots,a_k^{\ep_k}]$ is very easy to understand.  For every hyperplane ${ H}$ spanned by vectors in $V\setminus\{a_1\}$, the new vector $p$ must lie on the same side as $\ep_1a_1$; for hyperplanes containing $a_1$ but not $a_2$, $p$ must lie on the same side as $\ep_2a_2$; \etc\  
This is clearly achieved by the vector $p=\ep_1a_1+\delta \ep_2a_2+\delta^2 \ep_3a_3+\dots+\delta^{k-1} \ep_ka_k$ for some $\delta>0$ small enough. Equivalently, a suitable $p$ can be found by placing a new vector on top of $\ep_1 a_1$, then perturbing it slightly towards $\ep_2 a_2$, then towards $\ep_3a_3$ and so on. See Figure~\ref{fig:le} for an example of this procedure on an affine diagram. 

\begin{figure}[htpb]
\begin{center}
\includegraphics[width=.5\linewidth]{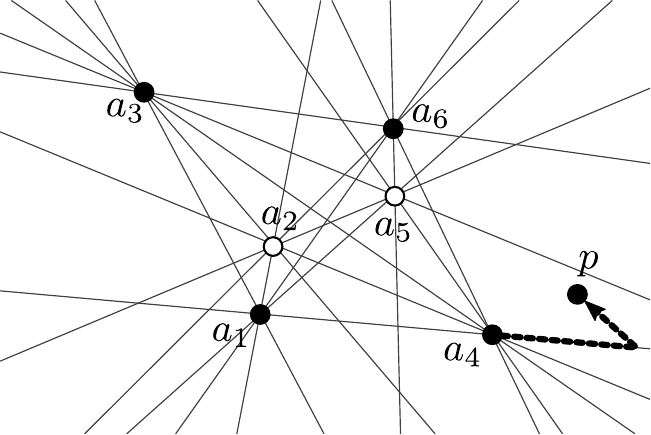}
\end{center}
\caption{An affine Gale diagram, and its lexicographic extension by $p=[a_4^+,a_1^-,a_6^+]$.}\label{fig:le}
\end{figure}

Lexicographic extensions on uniform matroids behave well with respect to contractions. 
The upcoming Proposition~\ref{prop:allquotientsofle} can be used to iteratively explain all cocircuits of a lexicographic extension, and hence can be seen as the restriction of~\cite[Proposition 7.1.4]{OrientedMatroids1993} to lexicographic extensions. It is a very useful tool that will be used extensively. Its proof is not complicated and can be found in Appendix~\ref{sec:appendix}.

\begin{proposition}\label{prop:allquotientsofle}
Let $\cM$ be a uniform oriented matroid of rank $\rr$ on a ground set~$E$, and let 
$\cM[p]$ be the lexicographic extension of $\cM$ by $p=[a_1^{\ep_1},a_2^{\ep_2},\dots,a_\rr^{\ep_\rr}]$.
Then
\begin{align}
\cM[p]/p\ &\stackrel{\varphi}{\simeq}\ (\cM/a_1)[a_2^{-\ep_1\ep_2},\dots,a_\rr^{-\ep_1\ep_\rr}],\label{eq:Mmodp}\\
\cM[p]/a_i\ &=\ (\cM/a_i)[a_1^{\ep_1},\dots,a_{i-1}^{\ep_{i-1}},a_{i+1}^{\ep_{i+1}},\dots,a_\rr^{\ep_\rr}], \text{ and }\label{eq:Mmoda}\\
\cM[p]/e\ &=\ (\cM/e)[a_1^{\ep_1},a_2^{\ep_2},\dots,a_{\rr-1}^{\ep_{\rr-1}}];\label{eq:Mmode}
\end{align}
where $e\in E$ is any element different from $p$ and any $a_i$. The isomorphism $\varphi$ in~\eqref{eq:Mmodp} is $\varphi(e)=e$ for all $e\in E\setminus \{p,a_1\}$ and $\varphi(a_1)=[a_2^{-\ep_1\ep_2},\dots,a_\rr^{-\ep_1\ep_\rr}]$; where the latter is the extending element.
\end{proposition}

The most interesting case is \eqref{eq:Mmodp}. If $\cM$ is realized by~$V$ and $V\cup \{p\}$ realizes the lexicographic extension of $\cM$ by $p=[a_1^{\ep_1},a_2^{\ep_2},\dots,a_\rr^{\ep_\rr}]$, then the intuition behind the isomorphism $\cM[p]/p\simeq \cM/a_1[a_2^{-\ep_1\ep_2},\dots,a_\rr^{-\ep_1\ep_\rr}]$ is that every hyperplane spanned by $V$ that goes through $p$ and not through $a_1$ looks very much like some hyperplane that goes through~$a_1$ and not through $p$. 
If $\ep_1=+$, then $a_1$ and~$p$ are very close, which means that when we perturb a hyperplane ${H}$ with $p$ in ${H}^+$ that is spanned by $a_1\cup S$ to its analogue ${H}'$ spanned by $p\cup S$, then $a_1$ lies in~${{H}'}^-$ and the remaining elements are on the same side of ${H}'$ as they were of  ${H}$. 
On the other hand, if $\ep_1=-$, then $a_1$ and~$-p$ are very close, and to perturb ${H}$ to ${H}'$, one must also switch the sign of $a_1$. Hence if $p$ was in ${H}^+$, then $a_1$ is in ${{H}'}^-$.

\section{The Sewing Construction}\label{sec:shemer}

This section is devoted to explaining the Sewing Construction, introduced by Shemer in~\cite{Shemer1982}, that allows to construct an infinite class of neighborly polytopes.
Even if Shemer described it in terms of Gr\"unbaum's \emph{beneath-beyond} technique, it is in fact a lexicographic extension, and we will explain it in these terms. In this section, we use the letter $\cP$ for oriented matroids to reinforce the idea that all the following results translate directly to polytopes.

\subsection{Sewing a point onto a flag}

Let $\cP$ be an acyclic oriented matroid on a ground set $E$, and let $F\subset E$ be a facet of~$\cP$. That is, there exists a cocircuit $C_{F}$ of $\cP$ such that $C_{ F}(e)=0$ if $e\in F$ and $C_{F}(e)=+$ otherwise. Consider a single element extension of $\cP$ by $p$ with signature~$\gs_p$. We say that $p$ is 
\emph{beneath}~$ F$ if $\gs_p(C_{ F})=+$, that $p$ is \emph{beyond}~$ F$ when $\gs_p(C_{ F})=-$, and that $p$ is \emph{on} $ F$ if $\gs_p(C_{ F})=0$.
We say that $p$ lies \emph{exactly beyond} a set of facets $\fF$ if it lies beyond all facets in $\fF$ and beneath all facets not in $\fF$. 

\begin{lemma}[{\cite[Proposition 9.2.2]{OrientedMatroids1993}}]\label{lem:benbey} Let $\tilde \cP$ be a single element extension of $\cP$ with signature $\sigma$. Then the values of $\sigma$ on the facet cocircuits of $\cP$ determine the whole face lattice of $\tilde\cP$.
\end{lemma}

A \emph{flag} of $\cP$ is a strictly increasing sequence of proper faces $F_1\subset F_2 \subset \dots \subset F_k$. We say that a flag $\cF$ is a \emph{subflag} of $\cF'$ if each face~$F$ that belongs to $\cF$ also belongs to $\cF'$.
 Given a flag $\cF=\{F_j\}_{j=1}^k$ of $\cP$, let $\fF_j$ be the set of facets of $\cP$ that contain $F_j$, and let $\sew(\cF):=\fF_1\setminus(\fF_2\setminus(\dots\setminus\fF_k)\dots)$, so that
\begin{equation*}
\sew(\cF)= 
\begin{cases} (\fF_1\setminus\fF_2) \cup (\fF_3\setminus\fF_4)\cup \dots \cup (\fF_{k-1}\setminus\fF_k)& \text{if $k$ is even,}
\\
(\fF_1\setminus\fF_2) \cup (\fF_3\setminus\fF_4)\cup \dots \cup \fF_k &\text{if $k$ is odd.}
\end{cases}
\end{equation*}

Given a polytope $P$ with a flag of faces $\cF=F_1\subset F_2 \subset \dots \subset F_k$, Shemer proved that there always exists an extension exactly beyond $\sew(\cF)$ (\cite[Lemma 4.4]{Shemer1982}), and called this extension sewing onto the flag. We will show that there is a lexicographic extension that realizes the desired signature.

\begin{definition}[Sewing onto a flag]\label{def:sewing}
Let $\cF=\{F_j\}_{j=1}^k$ be a flag of an acyclic matroid $\cP$ on a ground set $E$. We extend the flag with $F_{k+1}=E$ and define $U_j=F_j\setminus F_{j-1}$. 
We say that $p$ is \emph{sewn} onto $\cP$ through~$\cF$, if $\cP[p]$ is a lexicographic extension of $\cP$ by
\[p=[F_1^+,U_2^-,U_3^+,\dots,U_{k+1}^{(-1)^{k}}],\]
where these sets represent their elements in any order. 
Put differently, the lexicographic extension by $p$ is defined by $p=[a_1^{\ep_1},a_2^{\ep_2},\dots,a_{n}^{\ep_{n}}]$, where $a_1, \dots, a_{n}$ are the elements in $F_{k+1}=E$ sorted such that
\begin{itemize}
\item if there is some $m$ such that $a_i\in F_m$ and $a_j\notin F_m$, then $i<j$;
\item if the smallest $m$ such that $a_j\in F_m$ is odd, then $\ep_j=+$; and $\ep_j=-$ otherwise.
\end{itemize}
We use the notation $\cP[\cF]$ to designate the extension $\cP[p]$ when $p$ is sewn onto~$\cP$ through~$\cF$.
\end{definition}

For example, if $\cP$ has $6$ elements and rank $5$, and $F_1=\{a_1,a_2\}$ and $F_2=\{a_1,a_2,a_3,a_4\}$ are the elements of two faces of $\cP$, then the lexicographic extensions by $[a_1^+,a_2^+,a_3^-,a_4^-,a_5^+]$, $[a_2^+,a_1^+,a_3^-,a_4^-,a_6^+]$ or $[a_2^+,a_1^+,a_4^-,a_3^-,a_6^+]$ are extensions by an element sewn through~the flag $F_1\subset F_2$ (note how the orders in the faces and last element of the extension can be chosen arbitrarily). Another example is shown in Figure~\ref{fig:sewingonflags}.

\begin{figure}[htpb]
\centering

\begin{tabular}{m{.16\textwidth}m{.16\textwidth}m{.16\textwidth}m{.16\textwidth}m{.16\textwidth}}
 \includegraphics[width=.16\textwidth]{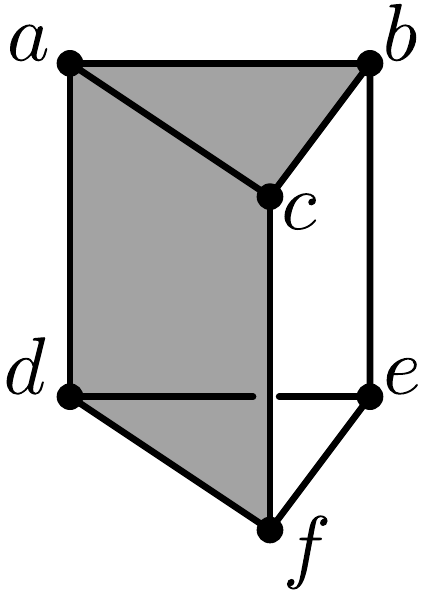}&
 \includegraphics[width=.16\textwidth]{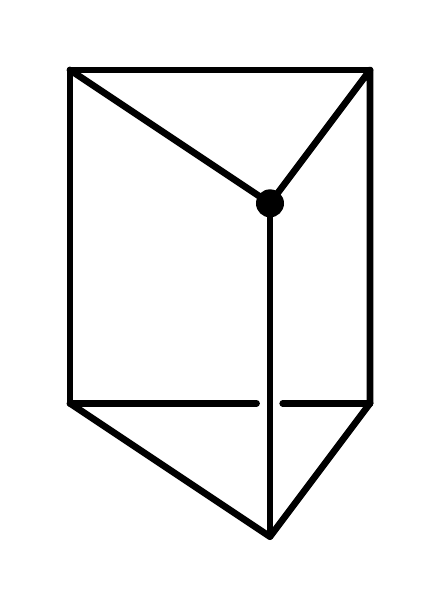}&
 \includegraphics[width=.16\textwidth]{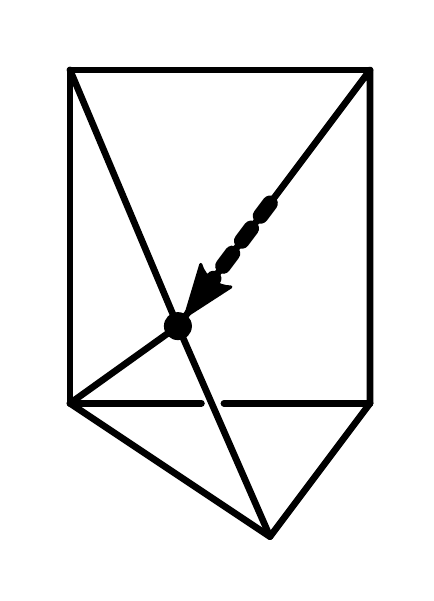}&
 \includegraphics[width=.16\textwidth]{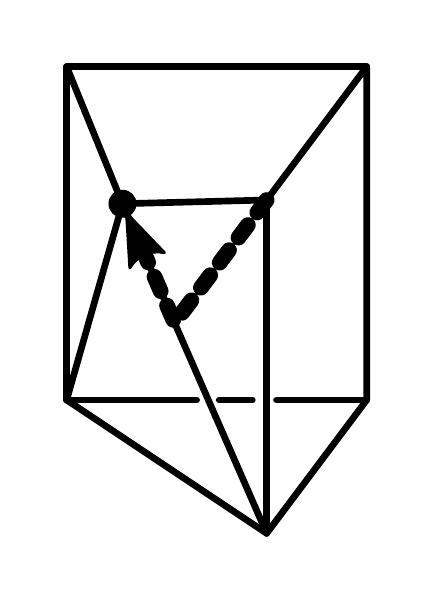}&
 \includegraphics[width=.16\textwidth]{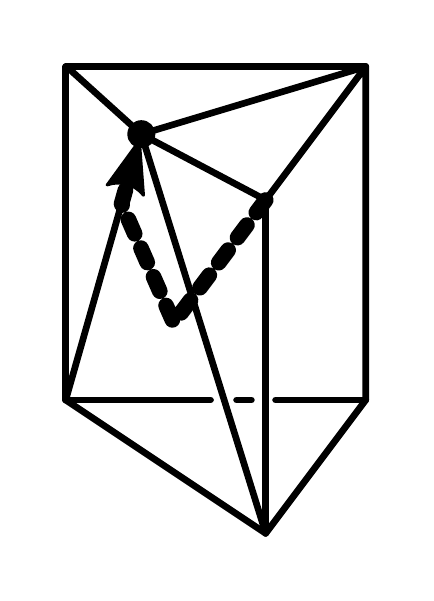}\\
\centering$P\centering$&\centering$P[c^+]\centering$&\centering$P[c^+,b^-]\centering$&\centering$P[c^+,b^-,a^+]\centering$&\centering$P[c^+,b^-,a^+,d^-]\centering$
\end{tabular}
\caption{A polytope $P=\conv\{a,b,c,d,e\}$. Sewing onto the flag $\cF=\{c\}\subseteq\{c,b\}\subseteq \{c,b,a\}$. Shaded facets in $P$ correspond to $\sew(\cF)$.}
 \label{fig:sewingonflags}
\end{figure}

In terms of oriented matroids, the definition of $\cP[\cF]$ is ambiguous, since it can represent different oriented matroids. However, the following proposition (together with Lemma~\ref{lem:benbey}) shows that all the extensions $\cP[\cF]$ have the same face lattice. In particular, this implies that there is no ambiguity when $\cP[\cF]$ is neighborly of odd rank, because these are rigid (Theorem~\ref{thm:neigharerigid}).

\begin{proposition}\label{prop:exactlybeyondle}
Let $\cF=\{F_j\}_{j=1}^k$ be a flag of an acyclic oriented matroid~$\cP$. If $\cP[p]$ is the lexicographic extension $\cP[\cF]$, then $p$ lies exactly beyond $\sew(\cF)$.
\end{proposition}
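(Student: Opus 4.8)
The plan is to show that for every facet cocircuit $C_F$ of $\cP$, the lexicographic signature $\gs_p(C_F)$ computed from $p=[a_1^{s_1},\dots,a_n^{s_n}]$ equals $-$ precisely when $F\in\fF(\cT)$. By definition of the lexicographic extension, $\gs_p(C_F)=s_i C_F(a_i)$ where $i$ is the smallest index with $C_F(a_i)\neq 0$; and since $F$ is a facet, $C_F(e)=0$ for $e\in F$ and $C_F(e)=+$ for $e\notin F$, so $\gs_p(C_F)=s_i$ where $a_i$ is the \emph{first element of the ordered sequence that does not lie in $F$}. Thus the whole statement reduces to a bookkeeping lemma about this first-element-outside-$F$ and the sign rule $s_j=(-1)^{m_j+1}$, where $m_j$ is the smallest index with $a_j\in T_{m_j}$ (and $m_j=k+1$ if $a_j$ lies in no $T_s$).

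First I would fix a facet $F$ and determine which $U_\ell$ the element $a_i$ belongs to, where $a_i$ is the first element outside $F$. Recall $\cF_\ell$ is the set of facets containing $T_\ell$, and that the $T_\ell$ form a chain, so $\cF_1\supseteq\cF_2\supseteq\dots\supseteq\cF_k$; hence there is a well-defined threshold $\ell^*=\ell^*(F)$, namely the largest $\ell$ with $F\in\cF_\ell$ (set $\ell^*=0$ if $F\notin\cF_1$). Because the ordering of the $a_j$'s places all elements of $T_1$ first, then the rest of $T_2$, and so on, with elements in no $T_s$ last, the first element $a_i\notin F$ is exactly the first element of the first block $U_\ell$ that is \emph{not entirely contained in $F$}. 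I claim that block is $U_{\ell^*+1}$: indeed $T_{\ell^*}\subseteq F$ since $F\in\cF_{\ell^*}$, so all of $U_1,\dots,U_{\ell^*}$ lie in $F$; and $T_{\ell^*+1}\not\subseteq F$ since $F\notin\cF_{\ell^*+1}$, so $U_{\ell^*+1}=T_{\ell^*+1}\setminus T_{\ell^*}$ contains an element outside $F$. (When $\ell^*=k$, the block $U_{k+1}=E\setminus T_k$ works, using $F\neq E$.) Therefore $\gs_p(C_F)=s_i$ with $a_i\in U_{\ell^*+1}$, and by the sign rule this equals $(-1)^{(\ell^*+1)+1}=(-1)^{\ell^*}$.

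It then remains to check that $(-1)^{\ell^*(F)}=-$ is equivalent to $F\in\fF(\cT)$. Unwinding the definition $\fF(\cT)=\cF_1\setminus(\cF_2\setminus(\dots\setminus\cF_k)\dots)$ with the chain $\cF_1\supseteq\cF_2\supseteq\dots$, one sees that $F\in\fF(\cT)$ iff $\ell^*(F)$ is odd: if $\ell^*$ is odd, $F$ lies in $\cF_{\ell^*}\setminus\cF_{\ell^*+1}$ (or in $\cF_k$ with $k=\ell^*$ odd), which is one of the ``$+$'' summands in the explicit formula for $\fF(\cT)$; if $\ell^*$ is even (including $\ell^*=0$, i.e.\ $F\notin\cF_1$), $F$ is removed by the alternating differences. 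This matches $(-1)^{\ell^*}=-$ exactly when $\ell^*$ is odd, completing the proof.

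**The main obstacle** is purely combinatorial: getting the alternating-difference bookkeeping for $\fF(\cT)$ exactly right, including the boundary cases ($\ell^*=0$, $\ell^*=k$, $k$ even vs.\ odd, and the role of the appended $T_{k+1}=\cP$ which makes $U_{k+1}$ nonempty so that the ``first element outside $F$'' always exists). Once the identification $\gs_p(C_F)=(-1)^{\ell^*(F)}$ is pinned down and reconciled with the case split in the definition of $\fF(\cT)$, the rest is immediate.
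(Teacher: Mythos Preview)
Your proof is correct and follows essentially the same approach as the paper's own proof: identify, for each facet $F$, the index $j$ (your $\ell^*$) with $F\in\cF_j\setminus\cF_{j+1}$, observe that the first $a_i$ outside $F$ lies in $U_{j+1}$, and read off $\gs_p(C_F)=s_i=(-1)^j$. Your write-up is more explicit about the threshold $\ell^*$ and the boundary cases, but the underlying argument is identical.
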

\begin{proof}
Let the lexicographic extension be by $p=[a_1^{\ep_1},a_2^{\ep_2},\dots,a_{n}^{\ep_{n}}]$ with the elements and signs as in Definition~\ref{def:sewing}.
We have to see that, for $1\leq j\leq k$, $p$ lies beneath any facet in $\fF_{j}\setminus\fF_{j+1}$ if $j$ is even, and beyond any facet in $\fF_{j}\setminus\fF_{j+1}$ if $j$ is odd (with the convention $\fF_{k+1}=\emptyset$).

That is, if $\gs$ is the signature of the lexicographic extension and $F$ a facet of $\cP$ defined by a cocircuit $C_F$, we want to see that 
\[\gs(C_F)=\begin{cases}
            +&\text{ if there is an even $j$ such that $F_{j}\subseteq F$ but $F_{j+1}\not\subseteq F$,}\\
            -&\text{ if there is an odd $j$ such that $F_{j}\subseteq F$ but $F_{j+1}\not\subseteq F$;}
           \end{cases}\]
where $F_{k+1}=E$, the ground set of $\cP$.
 
In our case, if $F$ is in $\fF_{j}\setminus\fF_{j+1}$ then the first $a_i$ with $C_F(a_i)\neq 0$ belongs to~$F_{j+1}$ and thus $\ep_i=+$ if $j$ is even and $\ep_i=-$ if $j$ is odd. Therefore, since by definition of lexicographic extension $\gs(C_F)=\ep_iC_F(a_i)=\ep_i$, then $\gs(C_F)=+$ (\ie $p$ is beneath $F$) when $j$ is even while $\gs(C_F)=-$ (\ie $p$ is beyond $F$) when $j$ is odd.
\end{proof}


\begin{observation}[$A$-sewing]\label{obs:Asewing}
In~\cite{LeeMenzel2010}, Lee and Menzel proposed the operation of \emph{$A$-sewing}. Given a flag $\cF=\{F_j\}_{j=1}^k$ of a polytope $P$, it allows to find a point on the facets in $\fF_k$, beyond the facets in $\sew(\cF)\setminus\fF_k$, and beneath the remaining facets. 
In our setting, one can analogously see that the process of $A$-sewing corresponds to a lexicographic extension by $[F_1^+,U_2^-,U_3^+,\dots,U_{k}^{(-1)^{k-1}}]$. 
In the example of Figure~\ref{fig:sewingonflags}, the polytopes $P[c^+,b^-]$ and $P[c^+,b^-,a^+]$ correspond to $A$-sewing through~the flags $\{c\}\subseteq\{c,b\}$ and $\{c\}\subseteq\{c,b\}\subseteq\{c,b,a\}$, respectively.
\end{observation}

\subsection{Sewing onto universal flags}

Shemer's Sewing Construction starts with a neighborly oriented matroid $\cP$ of rank~$\rd$ with $n$ elements and gives a neighborly oriented matroid $\tilde \cP$ of rank~$\rd$ with $n+1$ elements, provided that $\cP$ has a \emph{universal flag}.

\begin{definition}
Let $\cP$ be a uniform acyclic oriented matroid of rank~$\rd$, and let $m=\ffloor{\rd-1}{2}$.
\begin{enumerate}[label={(\roman*)}, leftmargin=*]
 \item A face $F$ of $\cP$ is a \emph{universal face} if the contraction $\cP/F$ is neighborly.
 \item A flag $\cF$ of $\cP$ is a \emph{universal flag} if $\cF=\{F_j\}_{j=1}^m$ where each $F_j$ is a universal face with $2j$ vertices.
\end{enumerate}
\end{definition}

The most basic example of neighborly polytopes with universal flags are cyclic polytopes, (cf. {\cite[Theorem 3.4]{Shemer1982}} and \cite[Theorem 1.1]{CordovilDuchet1990}).

\begin{proposition}[{\cite[Theorem 3.4]{Shemer1982}}]\label{prop:universalflagsofcyclic}
Let $\cyc{2m}{n}$ be a cyclic polytope of dimension $2m$, with vertices $a_1,\dots,a_n$ labeled in cyclic order. Then $\{a_i,a_{i+1}\}$ for $1\leq i<n$ and $\{a_1,a_{n}\}$ are universal edges of $\cyc{2m}{n}$. If moreover $n>2m+2$, then these are all the universal edges of~$\cyc{2m}{n}$.
\end{proposition}

\begin{remark}\label{rmk:universalflagsofcyclic}
It is not hard to prove that, for any universal edge $E$ of $\cyc{2m}{n}$,
\(\cyc{2m}{n}/E\simeq \cyc{2m-2}{n-2}\) 
where the isomorphism is such that the cyclic order is preserved. This observation, combined with Proposition~\ref{prop:universalflagsofcyclic}, provides a recursive method to compute universal flags of $\cyc{2m}{n}$ using universal faces that are the union of a universal edge of $\cyc{2m}{n}$ with a (possibly empty) universal face of $\cyc{2m-2}{n-2}$. 
\end{remark}

With these notions, we are ready to present Shemer's Sewing Theorem.
\begin{theorem}[The Sewing Theorem]\label{thm:shemersewing} \textup{\cite[Theorem 4.6]{Shemer1982}}
Let $P$ be a neighborly $2m$-polytope with a universal flag $\cF=\{F_j\}_{j=1}^m$, where $F_j=\bigcup_{i=1}^{j}\{x_i,y_i\}$.
Let $P[\cF]$ be the polytope obtained by sewing $p$ onto $P$ through~$\cF$.
 Then,
\begin{enumerate}
\item\label{it:newisneigh} $P[\cF]$ is a neighborly polytope with vertices $\verts (P[\cF])=\verts (P)\cup \{p\}$.
\item\label{it:newunifaces} For all $1\leq j \leq m$, $F_{j-1}\cup\{x_j,p\}$ and $F_{j-1}\cup\{y_j,p\}$ are universal faces of $P[\cF]$. If moreover $j$ is even, then $F_j$ is also a universal face of $P[\cF]$.
\end{enumerate} 
\end{theorem}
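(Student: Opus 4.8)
The plan is to reduce everything to the behavior of lexicographic extensions under contraction, exploiting the duality between neighborly and balanced matroids and, above all, Proposition~\ref{prop:allquotientsofle}. First I would set up notation: write $p=[a_1^{s_1},\dots,a_{2m+1}^{s_{2m+1}}]$ for the sewing extension as in Definition~\ref{def:sewing}, so that the first $|T_j|=2j$ entries (with appropriate signs coming from the parity pattern $+,-,+,-,\dots$) correspond to the nested faces $T_1\subset\dots\subset T_m$. The key structural fact I will use repeatedly is that contracting $\tilde P$ at a vertex $a_i$ yields, by~\eqref{eq:Mmoda}, a lexicographic extension of $P/a_i$ by the \emph{same} list with $a_i$ deleted, and contracting at $p$ yields, by~\eqref{eq:Mmodp}, a lexicographic extension of $P/a_1$ (with the sign pattern shifted and possibly globally flipped). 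Iterating this along the elements of a face $T_{j-1}\cup\{x_j\}$ of $\tilde P$ will let me identify $\tilde P/(T_{j-1}\cup\{x_j,p\})$ with a lexicographic extension of $(P/T_j)$, which is neighborly since $T_j$ is universal.

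For part~\eqref{it:newisneigh} I would argue that $\tilde P$ is neighborly by checking the dual condition: $\tilde P^*$ is balanced. Actually it is cleaner to argue directly on faces. A subset $R\subseteq \vertices(\tilde P)$ of size $\le m$ either avoids $p$, in which case $R$ is a face of $P$ (faces of $P$ survive in the extension, since $p$ lies beneath or beyond facets, never destroying a face that doesn't contain $p$), or $R=R'\cup\{p\}$ with $|R'|\le m-1$. In the latter case I want to show $R'\cup\{p\}$ is a face of $\tilde P$. The way to see this is to observe that a set $S$ is a face of $\tilde P$ iff $\tilde P/S$ is acyclic, equivalently iff $p$ is not in the interior of $\mathrm{conv}(S)$ relative to $\tilde P/R'$; and using the contraction formulas above, $\tilde P/R'$ is again a sewing-type lexicographic extension of the neighborly matroid $P/R'$ (neighborly because $P$ is neighborly and $|R'|\le m-1$ — every $\le m$-subset is a face, and contracting a face of a neighborly matroid by few vertices keeps it neighborly in the relevant range). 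Then $p$ is a vertex of that extension by the inseparability remark ($p$ and $a_1$ are $(-s_1)$-inseparable, so $p$ is never a loop or in a positive circuit alone), giving $R'\cup\{p\}$ as a face. That $p$ itself is a new vertex (not equal to or parallel to an old one) follows because $\tilde P$ is uniform of the same rank and $n+1>d$.

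For part~\eqref{it:newunifaces}: to show $T_{j-1}\cup\{x_j,p\}$ is a universal face of $\tilde P$ I must show $\tilde P/(T_{j-1}\cup\{x_j,p\})$ is neighborly. Using~\eqref{eq:Mmoda} to contract the vertices of $T_{j-1}\cup\{x_j\}$ one at a time, and~\eqref{eq:Mmodp} for the final contraction at $p$ (or contract $p$ first), I reduce to a lexicographic extension of $P/T_j$ by the list of the remaining $a_i$'s, which are exactly the vertices \emph{outside} $T_j$ with their signs. Since $T_j$ is universal, $P/T_j$ is neighborly of rank $2m+1-2j=2(m-j)+1$; I then need the lemma that such a lexicographic extension is again neighborly — this is precisely a single-vertex instance of the sewing construction applied to the empty flag (or, dually, that adding a lexicographic element keeps the dual balanced), so it follows from Proposition~\ref{prop:exactlybeyondle} together with the balanced/neighborly duality, or can be checked directly: a lexicographic extension of a uniform neighborly matroid by a full-length list has its dual obtained by a controlled modification that preserves the halving property of cocircuits. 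The symmetric statement for $y_j$ is identical. For the last clause, when $j$ is even $T_j$ is a face of $\tilde P$ already (it was a face of $P$ not containing $p$), and $\tilde P/T_j$ is the lexicographic extension of the neighborly $P/T_j$ just described, hence neighborly; so $T_j$ is universal in $\tilde P$.

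The main obstacle I anticipate is the clean bookkeeping in part~\eqref{it:newunifaces}: tracking exactly which $a_i$ survive with which sign after contracting a face, and handling the global sign flip and index shift produced by~\eqref{eq:Mmodp} when $p$ is among the contracted elements. One must verify that the resulting list on $P/T_j$ is again of ``sewing type'' (so that its extension is neighborly by the same lemma), rather than an arbitrary lexicographic list. A secondary subtlety is the auxiliary lemma that a lexicographic extension of a uniform neighborly matroid is neighborly; I would prove it either via Proposition~\ref{prop:exactlybeyondle} with a one-term flag, or dually by checking the discrepancy of every cocircuit of the extended dual stays $\le 1$, using Lemma~\ref{lem:circinseparable} to control the cocircuits through the new element via the inseparable pair.
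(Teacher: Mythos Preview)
Your overall plan to exploit Proposition~\ref{prop:allquotientsofle} and the inductive contraction structure is sound and is also what drives the paper's argument, but your execution has two genuine gaps.

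\medskip
\textbf{Part~\eqref{it:newisneigh}.} You start by mentioning the dual check ($\tilde P^*$ balanced) and then abandon it for a direct face argument. That switch is where things go wrong. In the case $p\notin R$, the assertion that ``faces of $P$ survive in the extension'' is not correct in general and your justification (``$p$ lies beneath or beyond facets, never destroying a face'') is false: placing $p$ beyond all facets through a face $R$ destroys $R$. In the case $p\in R$, you claim $P/R'$ is neighborly for any $R'$ with $|R'|\le m-1$, but that is exactly the notion of $R'$ being a \emph{universal} face, which is not assumed; contracting an arbitrary face of a neighborly matroid need not give a neighborly matroid. The paper instead keeps the dual formulation you discarded: it checks that every circuit $X$ of $\tilde P$ is halving, splitting into the cases $X(p)=0$; $X(p)\neq 0$, $X(x_1)=0$ (handled via Lemma~\ref{lem:circinseparable} and contravariance of $p,x_1$); and $X(p)\neq 0$, $X(x_1)\neq 0$, which reduces to the neighborliness of $\tilde P/\{p,x_1\}\simeq (P/T_1)[\cT/T_1]$ and induction on $m$. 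This avoids both of your problematic primal claims.

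\medskip
\textbf{Part~\eqref{it:newunifaces}.} Your reduction via \eqref{eq:Mmoda} and \eqref{eq:Mmodp} to a lexicographic extension of $P/T_j$ is correct and matches the paper's Lemma~\ref{lem:quotientsofextShemersewing}. The gap is the ``auxiliary lemma'' you invoke at the end: it is \emph{not} true that an arbitrary lexicographic extension of a uniform neighborly matroid is neighborly (for $s_1=-$ the extension is not even acyclic; and Proposition~\ref{prop:uniqueflags} shows that even among sewing-type extensions only those through a universal subflag work). What actually happens is that the sign bookkeeping shows the induced extension is again of sewing type, namely $(P/T_j)[\cT/T_j]$, through the \emph{universal} flag $\cT/T_j$ of the neighborly matroid $P/T_j$; one then concludes by induction on $m$, not by a general lemma. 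This is precisely the ``main obstacle'' you anticipated, and it is essential rather than a mere bookkeeping nuisance. The parity condition for $T_j$ arises because the leading sign of the residual list after contracting $T_j$ is $(-1)^j$, so only for even $j$ does one again get a sewing signature; you should make that explicit rather than assert that $\tilde P/T_j$ is ``the lexicographic extension \dots\ hence neighborly''.
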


Combining Remark~\ref{rmk:universalflagsofcyclic} and the Sewing Theorem~\ref{thm:shemersewing}, one can obtain a large family of neighborly polytopes.

\begin{constr}[Sewing: the family~$\cS$]\label{constr:cS}

\hspace*{\fill}
  \begin{itemize}[leftmargin=1cm, rightmargin=.5cm]
  \item Let $P_0:=\cyc{d}{n}$ be an even-dimensional cyclic polytope.
  \item Let $\cF_0$ be a universal flag of $P_0$. It can be found using Remark~\ref{rmk:universalflagsofcyclic}.
  \item For $i=1\dots k$:
  \begin{itemize}[leftmargin=.75cm]
  \item Let $P_i:=P_{i-1}[\cF_{i-1}]$. Then $P_i$ is neighborly by  Theorem~\ref{thm:shemersewing}(\ref{it:newisneigh}).
  \item Theorem~\ref{thm:shemersewing}(\ref{it:newunifaces}) constructs a universal flag $\cF_i$ of $P_i$.
  \end{itemize}
  \item $P:=P_k$ is a neighborly polytope in $\cS$.
 \end{itemize}
\end{constr}

This method generates a family of neighborly polytopes that we call \defn{totally sewn} polytopes
 and denote by \defn{$\cS$}. In contrast to Shemer's original definition of totally sewn polytopes, we do not admit arbitrary universal flags of $P[\cF]$ for sewing, but only those that arise from Theorem~\ref{thm:shemersewing}\eqref{it:newunifaces}.

\subsection{Inseparability: an essential tool}

Before we present our extensions of Shemer's technique, we must introduce an essential (albeit straightforward) tool that will be used extensively in what follows. It is strongly related to the concept of universal edges.

\begin{definition}
Given an oriented matroid~$\cM$ on a ground set $E$, and $\alpha\in\{+1,-1\}$, we say that two elements $p,q\in E$ are \defn{$\alpha$-inseparable} in $\cM$ if \begin{equation}\label{eq:definseparable}X(p)=\alpha X(q)\end{equation}
for each circuit $X\in\ci(\cM)$ with $p,q\in \underline X$. 

In the literature, $(+1)$-inseparable elements are also called \defn{covariant} and $(-1)$-inseparable elements \defn{contravariant} (see~\cite[Section 7.8]{OrientedMatroids1993}). 
\end{definition}

\begin{remark}\label{rmk:insareuni}
 It is not hard to see that if a pair $x, y$ of elements of a neighborly matroid $\cP$ are $(-1)$-inseparable then they form a universal edge of $\cP$. If moreover the rank of $\cP$ is odd, the converse is also true; that is, $x$ and $y$ form a universal edge only if they are $(-1)$-inseparable.
\end{remark}

A first useful property is that inseparability is preserved by duality (with a change of sign).

\begin{lemma}[{\cite[Exercise 7.36]{OrientedMatroids1993}}]\label{lem:insep}
 A pair of elements $p$ and $q$ are $\alpha$-inseparable in $\cM$ if and only if they are $(-\alpha)$-inseparable in $\Gale\cM$.
\end{lemma}

The following lemma about inseparable elements of neighborly and balanced oriented matroids will be also useful later. 
\begin{lemma}\label{lem:balonlycovar}
 All inseparable elements of a a balanced oriented matroid $\cM$ of rank $\rr\geq 2$ with $n$ elements such that $n-\rr-1$ is even must be $(+1)$-inseparable.

 Analogously, all inseparable elements of a neighborly oriented matroid $\cP$ of odd rank $\rd$ with at least $\rd+2$ elements must be $(-1)$-inseparable.
\end{lemma}
\begin{proof}
 Both results are equivalent by duality and Lemma~\ref{lem:insep}. To prove the second claim, observe that if $p$ and $q$ are $\alpha$-inseparable in $\cP$, then they are also $\alpha$-inseparable in $\cP\setminus S$ for any $S$ that contains neither $p$ nor $q$. Hence we can remove elements from $\cP$ until we are left with a neighborly matroid of rank $\rd$ with $\rd+2$ elements. All neighborly matroids of even dimension and corank~$2$ are cyclic $d$-polytopes with $d+3$ vertices (see~\cite[Section~2]{Gale1963}), and those only have $(-1)$-inseparable pairs.
\end{proof}

A final observation is that inseparable elements appear naturally when working with lexicographic extensions.

\begin{lemma}\label{lem:leinseparable}
 If $\cM [p]$ is a lexicographic extension of $\cM$ by $p=[a_1^{\ep_1},\dots,a_k^{\ep_k}]$, then $p$ and~$a_1$ are always $(-\ep_1)$-inseparable. Even more, $p$ and $a_i$ are $(-\ep_i)$-inseparable in $\cM[p]/\{a_1,\dots,a_{i-1}\}$ for $i=1\dots k$, and this property characterizes this single element extension (if $p$ is a loop in $\cM[p]/\{a_1,\dots,a_{k}\}$). 
\end{lemma}

\subsection{Extended Sewing:  flags that contain universal subflags}

We are now almost ready to present our first new construction, a generalized version of the Sewing Theorem for neighborly oriented matroids.
Like \cite[Theorem 2]{Bisztriczky2000}, our Extended Sewing does not depend on the parity of the rank. Moreover, it applies to any flag that contains a universal subflag, as suggested in~\cite[Remark 7.4]{Shemer1982}. 
The analogue of the second part of the Sewing Theorem~\ref{thm:shemersewing} is Proposition~\ref{prop:extnewunifaces}, where we find universal faces of the new neighborly matroid. 

In order to prove that Extended Sewing works, we need the following lemma, which generalizes~\cite[Theorem 3.1]{TrelfordVigh2011}, and the notation $\cF'/F_i=\{F_j'/F_i\}_{j=i+1}^{m}$ where $F_j'/F_i$ is the face of $\cP/F_i$ that represents~$F_j'$. 

\begin{lemma}\label{lem:quotientsofextShemersewing}
Let $\cP$ be a uniform neighborly matroid of rank $\rd$. Let $\cF'=\{F_k'\}_{k=1}^l$ be a flag of $\cP$ that contains a universal subflag $\cF=\{F_j\}_{j=1}^m$, where $m=\ffloor{\rd-1}{2}$ and $F_j=\bigcup_{i=1}^{j}\{x_i,y_i\}$.
 Let $p$ be sewn onto $\cP$ through~$\cF'$.

 If $F_{i-1}\cup \{y_{i}\}$ does not belong to $\cF'$, then
\begin{equation*}
\cP[\cF']/\{F_{i-1},x_{i},p\}\simeq (\cP/F_i)[\cF'/F_i].\end{equation*}
This isomorphism sends $y_{i}$ to the vertex sewn through~$[\cF'/F_i]$, while  the remaining vertices are mapped to their natural counterparts.
\end{lemma}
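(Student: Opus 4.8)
The strategy is to compute the contraction $\cP[\cT']/\{T_{i-1},x_i,p\}$ by iterated contraction, using Proposition~\ref{prop:allquotientsofle} to push each contraction through the lexicographic extension, and then recognize the result as $(\cP/T_i)[\cT'/T_i]$. First I would contract the $2(i-1)$ elements of $T_{i-1}$ one at a time. Each of these elements is some $a_j$ in the lexicographic description $p=[a_1^{s_1},\dots]$ from Definition~\ref{def:sewing}, so by eq.~\eqref{eq:Mmoda} each such contraction commutes with the extension and simply deletes that element from the signature of $p$, leaving (on the matroid $\cP/T_{i-1}$) a lexicographic extension whose signature is obtained from the sewing signature of $\cT'$ by dropping the entries in $T_{i-1}$. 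Because $T_{i-1}$ is an initial segment of every $T_j'$ with $j\ge i-1$, what remains is exactly the sewing signature on $\cP/T_{i-1}$ for the flag $\cT'/T_{i-1}$ with its first $i-1$ pairs removed, i.e. $(\cP/T_{i-1})[\widehat\cT]$ where $\widehat\cT$ is $\cT'/T_{i-1}$ truncated to start at index $i$.

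Next I would contract $x_i$. Since $x_i$ lies in $T_i$ but, by hypothesis, $T_{i-1}\cup\{y_i\}\notin\cT'$, the element $x_i$ is (after removing $T_{i-1}$) the very first entry $a_1$ of the remaining lexicographic signature, with sign $s_1=+$ (it enters at the odd step corresponding to $T_i$ being at an odd position relative to $T_{i-1}$ — this is exactly the bookkeeping in Definition~\ref{def:sewing}, and I must check the parity carefully). Now contracting $a_1=x_i$ is the case $\tilde\cM/a_i$ of eq.~\eqref{eq:Mmoda} with $i=1$: it deletes $x_i$ from the signature, giving $(\cP/(T_{i-1}\cup\{x_i\}))$ with the lexicographic extension whose signature starts at $y_i$. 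The point is that $y_i$ is now the first entry $a_1$ of this new signature.

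Finally I would contract $p$. By eq.~\eqref{eq:Mmodp} of Proposition~\ref{prop:allquotientsofle}, contracting $p$ from a lexicographic extension $[a_1^{s_1},a_2^{s_2},\dots]$ yields $(\cM/a_1)[a_2^{-s_1 s_2},\dots]$, with $a_1$ (here $y_i$) becoming the new extending element via the isomorphism $\varphi$. So $\cP[\cT']/\{T_{i-1},x_i,p\}\simeq (\cP/(T_{i-1}\cup\{x_i,y_i\}))[\text{(signature with signs multiplied by }-s_1)]$, and $T_{i-1}\cup\{x_i,y_i\}=T_i$. The sign flip $-s_1=-(+)=-$ is precisely what converts the sewing signs of $\cT'$ (which at the pairs beyond $T_i$ alternate starting from the position after $T_i$) into the sewing signs of $\cT'/T_i$ (which alternate starting from position $1$), because removing one pair $\{x_i,y_i\}$ shifts all subsequent parities by one. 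Hence the resulting signature is exactly that of sewing through $\cT'/T_i$, so the contraction is $(\cP/T_i)[\cT'/T_i]$, and tracking $\varphi$ shows $y_i$ maps to the sewn vertex while all other elements map identically.

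\textbf{Main obstacle.} The conceptual content is light; the real work is the sign and index bookkeeping. I expect the delicate point to be verifying that after removing $T_{i-1}$ the element $x_i$ genuinely occupies the first slot with the correct sign $+$ (this uses both that $T_{i-1}\cup\{y_i\}\notin\cT'$, so $y_i$ is not forced ahead of $x_i$, and the specific ordering/sign rule of Definition~\ref{def:sewing}), and then checking that the global sign flip coming from eq.~\eqref{eq:Mmodp} matches the parity shift in the definition of $\fF(\cdot)$ when one pair is deleted from the flag. Once the parities are pinned down, applying Proposition~\ref{prop:allquotientsofle} three times (once per contracted block, using \eqref{eq:Mmoda} for the $T_{i-1}$ elements and for $x_i$, then \eqref{eq:Mmodp} for $p$) closes the argument.
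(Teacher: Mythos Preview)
Your approach is the same as the paper's: contract $T_{i-1}$, then $x_i$, then $p$, each time pushing the contraction through the lexicographic extension via Proposition~\ref{prop:allquotientsofle}. The paper carries this out in exactly that order.

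The gap is in your sign analysis. You assert that after contracting $T_{i-1}$ the leading entry $x_i$ carries sign $+$, but the sign of $x_i$ is governed by the position of $T_{i-1}$ \emph{in $\cT'$}, not in $\cT$: if $T_{i-1}=T'_{j-1}$ then $x_i$ gets sign $(-1)^{j+1}$, which may be $-$. The paper deals with this by listing the four possible initial patterns
\[
[x_i^+,y_i^+,x_{i+1}^-,\dots],\quad[x_i^-,y_i^-,x_{i+1}^+,\dots],\quad[x_i^+,y_i^-,x_{i+1}^+,\dots],\quad[x_i^-,y_i^+,x_{i+1}^-,\dots],
\]
the first two arising when $T_i$ is not split in $\cT'$ and the last two when $T_i$ is $x_i$-split (these are the only options since $T_{i-1}\cup\{y_i\}\notin\cT'$). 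In each case one contracts $x_i$ (eq.~\eqref{eq:Mmoda}) and then $p$ (eq.~\eqref{eq:Mmodp}); the sign twist $-s_1s_k$ coming from \eqref{eq:Mmodp} uses the sign of $y_i$, not of $x_i$, and in all four cases the result begins $[x_{i+1}^+,\dots]$, which is $[\cT'/T_i]$.

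Your heuristic ``removing one pair $\{x_i,y_i\}$ shifts all subsequent parities by one'' is also not right as stated: if $T_i$ is $x_i$-split, removing $\{x_i,y_i\}$ deletes \emph{two} faces from $\cT'$, so the parity in $\cT'$ does not shift. The reason the answer still comes out to $[\cT'/T_i]$ is the interaction between the sign of $y_i$ and the twist in \eqref{eq:Mmodp}, which you can only see by checking the cases. You correctly identified this bookkeeping as the main obstacle; you just need to actually do the four-case check rather than assume a single parity.
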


\begin{proof}
By Proposition~\ref{prop:allquotientsofle}, the contraction $\cP[\cF']/F_{i-1}$ is a lexicographic extension of $\cP/F_{i-1}$ whose signature coincides with that of $[\cF']$ by removing the first $2(i-1)$ elements. Hence $\cP[\cF']/F_{i-1}$ must be one of the extensions
\begin{equation*}
\cP[\cF']/F_{i-1} \in
\left.
\begin{cases} 
\quad\cP/F_{i-1}[x_i^+,y_i^+,x_{i+1}^-,\dots],
\\
\quad\cP/F_{i-1}[x_i^-,y_i^-,x_{i+1}^+,\dots],
\\
\quad\cP/F_{i-1}[x_i^+,y_i^-,x_{i+1}^+,\dots],
\\
\quad\cP/F_{i-1}[x_i^-,y_i^+,x_{i+1}^-,\dots]
\end{cases}\right\}.
\end{equation*}

If $F'_{k-1}$ is the face of $\cF'$ corresponding to $F_{i-1}$, and $U'_{k}=F'_{k}\setminus F'_{k-1}$, then the first two cases are possible when $U'_{k}=\{x_i,y_i\}$, and the last two when $U'_{k}= \{x_i\}$ (the case $U'_{k}= \{y_i\}$ is excluded by hypothesis).
We use Proposition~\ref{prop:allquotientsofle} twice on each of these (contracting successively	 $x_i$ and $p$) to get \(\cP[\cF']/\{F_{i-1},x_{i},p\}\simeq (\cP/\{F_{i-1},x_{i},y_i\})[x_{i+1}^+,\dots]=(\cP/F_i)[\cF'/F_i]\).
\end{proof}

We can now state and prove the Extended Sewing Theorem.

\begin{theorem}[The Extended Sewing Theorem]\label{thm:extshemersewing}
Let $\cP$ be a uniform neighborly oriented matroid of rank~$\rd$ with a flag $\cF'=\{F_k'\}_{k=1}^l$ that contains a universal subflag $\cF=\{F_j\}_{j=1}^m$, where $F_j=\bigcup_{i=1}^{j}\{x_i,y_i\}$ and $m=\ffloor{\rd-1}{2}$.
Let $p$ be sewn onto $\cP$ through~$\cF'$. Then $\cP[\cF']$ is a uniform neighborly matroid of rank $\rd$.
\end{theorem}
\begin{proof}
The proof is by induction on $\rd$. Observe for the base case that all acyclic matroids of rank $1$ or $2$ are neighborly.

Assign the labels to $x_1$ and $y_1$ in such a way that the extension $\cP[\cF']$ is either the lexicographic extension 
\(\cP\left[x_1^+,y_1^+,\dots\right]\) or \(\cP\left[x_1^+,y_1^-,\dots\right]\) (depending on whether $F'_1=\{x_1,y_1\}$ or $F'_1=\{x_1\}$). 

We check that $\cP[\cF']$ is neighborly by checking that $\pGale{\cP[\cF']}$ is balanced, \ie we check that every circuit $X$ of $\cP[\cF']$ is balanced. That is, we want to see that $\ffloor{\rd+1}{2}\leq |X^+|\leq \fceil{\rd+1}{2}$, where $X^+=\set{e\in E}{X(e)=+}$. Let $X\in \ci(\cP)$:
\begin{enumerate}
 \item If $X(p)=0$, then $X$ is balanced because it is also a circuit of $\cP$, and $\cP$ is neighborly.
 \item If $X(p)\neq 0$ and $X({x_1})=0$, we use that $p$ and $x_1$ are $(-1)$-inseparable because of Lemma~\ref{lem:leinseparable}. By Lemma~\ref{lem:circinseparable}, there is a circuit $X'\in \ci(\tilde \cP)$ with $X'({x_1})=X(p)$, $X'(p)=0$ and $X'(e)=X(e)$ for all $e\notin\{x_1,p\}$. Observe that $|X^+|=|X'^+|$. Since $X'(p)=0$, $X'$ is balanced by the previous point, and hence so is~$X$.
 \item If $X(p)\neq 0$ and $X({x_1})\neq 0$ then $X(p)=-X({x_1})$ because $p$ and $x_1$ are $(-1)$-inseparable. Observe that the rest of the values of $X$ correspond to a circuit of $\cP[\cF']/\{p,x_1\}$. If $\cP[\cF']/\{p,x_1\}$ is neighborly, we are done.

By Lemma~\ref{lem:quotientsofextShemersewing}, $\cP[\cF']/\{p,x_1\}\simeq \left(\cP/F_1\right)[\cF'/F_1]$. Since the edge $\{x_1,y_1\}$ was universal, the oriented matroid $\cP/F_1$ (of rank $s-2$) is neighborly, and the flag $\cF'/F_1$  contains the universal flag $\cF/F_1$. Therefore, $\cP[\cF']/\{p,x_1\}$ is neighborly by induction.\qedhere
\end{enumerate}
\end{proof}

\begin{figure}[htpb]
\centering
 \begin{tabular}{ccc}
\includegraphics[width=.25\textwidth]{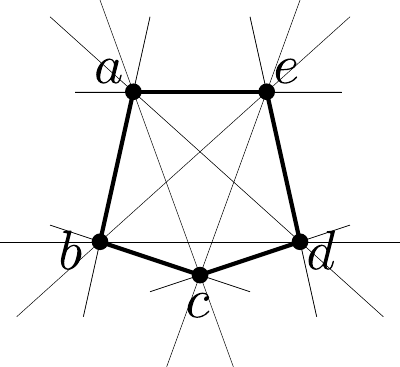}&
\includegraphics[width=.25\textwidth]{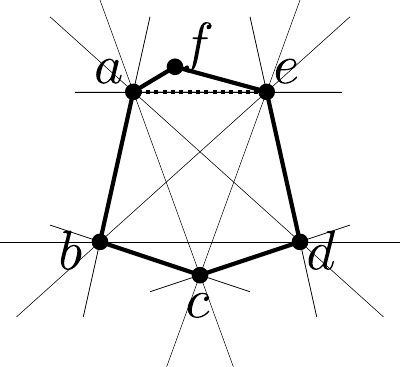}& 
\includegraphics[width=.25\textwidth]{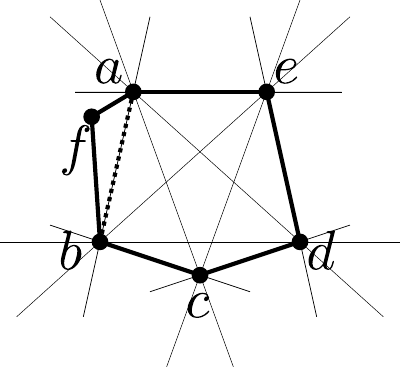}\\
$P$&
$P[a^+,e^+,c^-]$&
$P[a^+,e^-,c^+]$ 
 \end{tabular}
 \caption[Sewing example]{Extended sewing: sewing $f$ onto $\{a,e\}$ (middle), and sewing onto $\{a\}\subset \{a,e\}$ (right). In the first case, $\{a,f\}$ and $\{e,f\}$ become universal faces, while $\{a,e\}$ is not a universal face any more. In the second case, $\{a,f\}$ and $\{a,e\}$ are universal faces, while $\{e,f\}$ is not.  }
 \label{fig:sewing}
\end{figure}

One way to understand this technique is the following. By construction, $p$ is beneath every facet of $\cP$ that does not contain $x_1$. Therefore, every subset $S$ of $\ffloor{s-1}{2}$ elements of $\cP$ that does not contain $x_1$ must still be a face of $\cP[p]$. Hence, to prove the neighborliness of $\cP[p]$, it is enough to study those subsets that contain $x_1$ or~$p$. For those, we use Lemma~\ref{lem:quotientsofextShemersewing}. If $\cF'$ is chosen to contain a universal subflag, then the contraction of $\{x_1,p\}$ is also an Extended Sewing of a neighborly matroid; and thus, neighborly by induction.

A first application of the Extended Sewing Theorem is the construction of cyclic polytopes.

\begin{proposition}[{\cite[Theorem 5.1]{LeeMenzel2010}}]\label{prop:cyclicareextendedsewn}
Let $\cP$ be the oriented matroid of a cyclic polytope $\cyc{d}{n}$ with elements $a_1,\dots,a_n$ labeled in cyclic order, and let~$\cF$ be the flag $\cF=\{a_n\}\subset\{a_{n-1},a_n\}\subset \dots \subset\{a_{n-d+1},\dots,a_n\}$. Then $\cP[\cF]$ is the oriented matroid of the cyclic polytope $\cyc{d}{n+1}$.
\end{proposition}

\subsection{Universal faces created by Extended Sewing}

We can tell many universal faces of the neighborly oriented matroids constructed using the Extended Sewing Theorem~\ref{thm:extshemersewing} thanks to Proposition~\ref{prop:extnewunifaces}, the analogue of the second part of the Sewing Theorem~\ref{thm:shemersewing}. It provides a simple way to compute universal flags of sewn matroids that is explained in Remark~\ref{rmk:extnewuniflags}.

These faces are best described using the following notation for flags that contain a fixed universal subflag.

\begin{definition}
Let $\cP$ be a neighborly matroid of rank~$\rd=2m+1$ and let $\cF'=\{F_k'\}_{k=1}^l$ be a flag of $\cP$ that contains the universal subflag $\cF=\{F_j\}_{j=1}^m$, where $F_j=\bigcup_{i=1}^{j} \{x_i,y_i\}$. Observe that for each $1\leq i\leq j$, $F_{i-1}\cup\{x_i\}$ and $F_{i-1}\cup\{y_i\}$ cannot both belong to~$\cF'$.
We say that $F_{i}\in \cF$ is \defn{$x_i$-split} (resp. \defn{$y_i$-split}) in $\cF'$ if $F_{i-1}\cup\{x_i\}$ (resp. $F_{i-1}\cup\{y_i\}$) belongs to~$\cF'$, and \defn{non-split} if neither $F_{i-1}\cup\{x_i\}$ nor $F_{i-1}\cup\{y_i\}$ belong to $\cF'$.
Moreover, we say that $F_i$ is \defn{even} in $\cF'$ if the number of non-split faces~$F_j$ with $j\leq i$ is even, $F_i$ is \defn{odd} otherwise.
\end{definition}

For example, if $l=2$ and $\cF=(F_1:=	\{x_1,y_1\})\subset (F_2:=\{x_1,y_1,x_2,y_2\})$ is a universal flag, then $F_1$ is $x_1$-split and $F_2$ is non-split in the flag $\cF'= \{x_1\}\subset\{x_1,y_1\}\subset\{x_1,y_1,x_2,y_2\}$. Moreover, $F_1$ is even in $\cF'$ whereas $F_2$ is odd. In comparison, in the flag $\cF''=\{x_1,y_1\}\subset\{x_1,y_1,y_2\}\subset\{x_1,y_1,x_2,y_2\}$, $F_1$ is non-split and $F_2$ is $y_2$-split; and both $F_1$ and $F_2$ are odd.

\begin{remark}
 Theorem~\ref{thm:extshemersewing} not only generalizes the Sewing Theorem (when no face is split), but also includes Barnette's facet-splitting technique~\cite[Theorem 3]{Barnette1981}, which corresponds to the case where all faces of the universal flag are split.
\end{remark}

\begin{proposition}\label{prop:extnewunifaces}
Let $\cP$ be a uniform neighborly oriented matroid of rank~$\rd$ with a flag $\cF'=\{F_k'\}_{k=1}^l$ that contains a universal subflag $\cF=\{F_j\}_{j=1}^m$, where $F_j=\{x_i,y_i\}_{i=1}^{j}$ and $m=\ffloor{\rd-1}{2}$. 
Let $p$ be sewn onto $\cP$ through~$\cF'$.
Then the following are universal faces of $\cP[\cF']$:
\begin{enumerate}
 \item\label{it:extnewuf1} $F_i$, where $1\leq i \leq m$, if $F_i$ is even.
 \item\label{it:extnewuf2} $(F_{j}\setminus{x_i})\cup p$, where $1\leq i\leq j\leq m$, if
      \begin{enumerate}[label={(\roman*)}, leftmargin=*]
	\item\label{it:extnewuf21} $F_i$ is not split and $F_j/F_i$ is even in $\cF'/F_i$, or
	\item\label{it:extnewuf22} $F_i$ is $x_i$-split and $F_j/F_i$ is odd in $\cF'/F_i$, or
	\item\label{it:extnewuf23} $F_i$ is $y_i$-split and $F_j/F_i$ is even in $\cF'/F_i$.
      \end{enumerate}
 \item\label{it:extnewuf3} $(F_{j}\setminus{y_i})\cup p$, where $1\leq i\leq j\leq m$, if
      \begin{enumerate}[label={(\roman*)}, leftmargin=*]
	\item\label{it:extnewuf31} $F_i$ is not split and $F_j/F_i$ is even in $\cF'/F_i$, or
	\item\label{it:extnewuf32} $F_i$ is $x_i$-split and $F_j/F_i$ is even in $\cF'/F_i$, or
	\item\label{it:extnewuf33} $F_i$ is $y_i$-split and $F_j/F_i$ is odd in $\cF'/F_i$.
      \end{enumerate}
\end{enumerate}

\end{proposition}
\begin{proof}
Without loss of generality, we will assume that all split faces are $x_i$-split.
The proof relies on applying, case by case, Proposition~\ref{prop:allquotientsofle} to reduce the contraction to a lexicographic extension that we know to be neighborly because of Theorem~\ref{thm:extshemersewing}.

By Definition~\ref{def:sewing}, there are some elements $a,b$ and some $\ep=\pm$ such that
\begin{equation*}
\cP[\cF']
=
\begin{cases}
\cP[\dots,a^\ep,x_i^{-\ep},y_i^{\ep},b^{-\ep},\dots]& \text{ if $F_i$ is $x_i$-split,}
\\
\cP[\dots,a^\ep,x_i^{-\ep},y_i^{-\ep},b^{\ep},\dots]&\text{ if it is not split.}
\end{cases}
\end{equation*}
Therefore, the sign of $x_i$ in $[\cF']$ is $+$ if and only if $F_{i-1}$ is even. In particular, if $F_i$ is even, then $\cP[\cF']/F_i \simeq (\cP/F_i)[\cF'/F_i]$ and $\cF'/F_i$ is a universal flag of $\cP/F_i$, which is neighborly since $F_i$ is a universal face. This proves point~\ref{it:extnewuf1}.

Moreover, independently of whether $F_i$ is even or odd, 
\begin{equation*}
\cP[\cF']/(F_{i-1}\cup \{p\})\simeq  
\begin{cases}
 \cP/(F_{i-1}\cup \{x_i\})[y_i^{+},x_{i+1}^{-},\dots]& \text{ if $F_i$ is split,}
\\
 \cP/(F_{i-1}\cup \{x_i\})[y_i^{-},x_{i+1}^{+},\dots]& \text{ if it is not.}
\end{cases}
\end{equation*}
Hence, $\cP[\cF']/(F_{i-1}\cup \{x_i,p\})\simeq (\cP/F_i)[\cF'/F_i]$ always. If moreover $F_i$ is not split then $\cP[\cF']/(F_{i-1}\cup \{y_i,p\})\simeq (\cP/F_i)[\cF'/F_i]$. Therefore, in these cases the problem is reduced to finding universal faces of $(\cP/F_i)[\cF'/F_i]$. But we already know that $F_j/F_i$ is a universal face of $(\cP/F_i)[\cF'/F_i]$ when it is even. This proves points~\ref{it:extnewuf2}\ref{it:extnewuf21}, \ref{it:extnewuf2}\ref{it:extnewuf23}, \ref{it:extnewuf3}\ref{it:extnewuf31} and \ref{it:extnewuf3}\ref{it:extnewuf32}.

If $F_i$ is split, then $\tilde \cP/(F_{i-1}\cup \{y_i,p\})\simeq (\cP/F_i)[-\cF'/F_i]$, where $[-\cF'/F_i]$ means the extension by $[\cF'/F_i]$ with the signs reversed. Using the previous observation, we obtain that $(\cP/F_i[-\cF'/F_i])/(F_j/F_i)\simeq (\cP/F_j)[\cF'/F_j]$ when $F_j/F_i$ is odd, and this proves the remaining points~\ref{it:extnewuf2}\ref{it:extnewuf22} and~\ref{it:extnewuf3}\ref{it:extnewuf33}.
\end{proof}

\begin{remark}\label{rmk:extnewuniflags}
In particular, Proposition~\ref{prop:extnewunifaces} provides a simple way to tell universal flags of~$\cP[\cF']$. We start with universal edges:

\begin{itemize}
\item If $F_1$~is not split then $\{x_1,p\}$ and $\{y_1,p\}$ are universal edges of $\cP[\cF']$; 
\item if $F_1$~is $x_1$-split, then $\{x_1,p\}$ and $\{x_1,y_1\}$ are universal edges of $\cP[\cF']$; 
\item finally, if $F_1$~is $y_1$-split, then $\{y_1,p\}$ and $\{x_1,y_1\}$ are universal edges of $\cP[\cF']$. 
\end{itemize}
The contraction of any of these universal edges is isomorphic to $(\cP/F_1)[\cF'/F_1]$, and we can inductively build a universal flag of $\cP[\cF']$.
\end{remark}

The example in Figure~\ref{fig:sewing} can give some intuition on why do these universal edges appear. The next example explores higher dimensional universal faces.

\begin{example}\label{ex:universalfaces}
 Let $\cM$ be a neighborly oriented matroid of rank~$5$ with a universal flag $\cF=F_1\subset F_2$, where $F_1=\{a,b\}$ and $F_2=\{a,b,c,d\}$. Consider the lexicographic extensions by the elements
\begin{align*}
p_1&=[a^+,b^+,c^-,d^-,e^+],\\p_2&=[a^+,b^-,c^+,d^+,e^-],\\p_3&=[a^+,b^+,c^-,d^+,e^-],\text{ and }\\p_4&=[a^+,b^-,c^+,d^-,e^+],
\end{align*}
where $e$ is any element of $\cM$. For $i=1,2,3,4$, each $p_i$ gives  rise to the oriented matroid $\cM_i=\cM[p_i]$, which corresponds to sewing through the flag~$\cF_i$, with
\begin{align*}
\cF_1&=\{a,b\}\subset\{a,b,c,d\},\\
\cF_2&=\{a\}\subset\{a,b\}\subset\{a,b,c,d\},\\
\cF_3&=\{a,b\}\subset\{a,b,c\}\subset\{a,b,c,d\},\text{ and }\\
\cF_4&=\{a\}\subset\{a,b\}\subset\{a,b,c\}\subset\{a,b,c,d\}.
\end{align*}
Observe that $F_1$ is split in $\cF_2$ and $\cF_4$, while $F_2$ is split in $\cF_3$ and $\cF_4$. Moreover, $F_1$ is even in $\cF_2$ and $\cF_4$, and $F_2$ is even in $\cF_1$ and $\cF_4$.
Table~\ref{tb:exunifaces} shows for which~$\cM_i$ each of the following sets of vertices is a universal face.

\begin{table}[htpb]
  \caption{Universal faces in Example~\ref{ex:universalfaces}}	\label{tb:exunifaces}
   \renewcommand{\arraystretch}{1.2}
\centering
\newcolumntype{C}[1]{>{\centering\let\newline\\\arraybackslash\hspace{0pt}}m{#1}}
\begin{tabular}{ccC{.9cm}C{.9cm}C{.9cm}C{.9cm}C{.9cm}C{.9cm}C{.9cm}C{.9cm}}
\hline
	&&$ab$&$p_ib$&$ap_i$&$abcd$&$p_ibcd$&$ap_icd$&$abp_id$&$abcp_i$\\\hline
$\cM_1$&&\cross&\tick&\tick&\tick&\cross&\cross&\tick&\tick\\
$\cM_2$&&\tick&\cross&\tick&\cross&\tick&\cross&\tick&\tick\\
$\cM_3$&&\cross&\tick&\tick&\cross&\tick&\tick&\cross&\tick\\
$\cM_4$&&\tick&\cross&\tick&\tick&\cross&\tick&\cross&\tick\\ 
\hline
\end{tabular}
\end{table}

\end{example}

\subsection{Extended Sewing and Omitting}

Just like in the construction of the family {$\cS$}, we can combine the Extended Sewing Theorem~\ref{thm:extshemersewing}
and Proposition~\ref{prop:extnewunifaces} to obtain a large family~\defn{$\cE$} of neighborly polytopes that contains $\cS$. In fact, since cyclic polytopes belong to~$\cE$ by Proposition~\ref{prop:cyclicareextendedsewn} it suffices to start sewing on a simplex.

\begin{constr}[Extended Sewing: the family $\cE$]\label{constr:cE}

\hspace*{\fill}
  \begin{itemize}[leftmargin=1cm, rightmargin=.5cm]
  \item Let $P_0:=\Delta_d$ be a $d$-dimensional simplex.
  \item Let $\cF_0'$ be a flag of $P_0$ that contains a universal subflag $\cF_0$. $\cF_0$ is built using the fact that all edges of a simplex are universal.
  \item For $i=1\dots k$:
  \begin{itemize}[leftmargin=.75cm]
  \item Let $P_i:=P_{i-1}[\cF_{i-1}']$, which is neighborly by Theorem~\ref{thm:extshemersewing}.
  \item Use Remark~\ref{rmk:extnewuniflags} to find a universal flag $\cF_i$ of $P_i$.
  \item Let $\cF_i'$ be any flag of $P_i$ that contains $\cF_i$ as a subflag.
  \end{itemize}
  \item $P:=P_k$ is a neighborly polytope in $\cE$.
 \end{itemize}
\end{constr}

Moreover, since subpolytopes (convex hulls of subsets of vertices) of neighborly polytopes are neighborly, any polytope obtained from a member of~$\cE$ by omitting some vertices is also neighborly. The polytopes that can be obtained in this way via sewing and omitting form a family that we denote \defn{$\cO$}.

\begin{constr}[Extended Sewing and Omitting: the family~$\cO$]\label{constr:cO}

\hspace*{\fill}
  \begin{itemize}[leftmargin=1cm, rightmargin=.5cm]
  \item Let $Q\in \cE$ be a neighborly polytope constructed using Extended Sewing.
  \item Let $S\subseteq\verts(Q)$ be a subset of vertices of $Q$.
  \item $P:=\conv(S)$ is a neighborly polytope in $\cO$.
 \end{itemize}
\end{constr}

\subsection{Optimality}

We finish this section by showing that for matroids of odd rank, the flags of the Extended Sewing Theorem~\ref{thm:extshemersewing} are the only ones that yield neighborly polytopes. Therefore, in this sense the Sewing Construction cannot be further improved.

\begin{proposition}\label{prop:uniqueflags}
Let $\cP$ be a uniform neighborly oriented matroid of odd rank $\rd\geq 3$ with more than $\rd+1$ elements.
Then $\cP[\cF]$ is neighborly if and only if $\cF$ contains a universal subflag.
\end{proposition}
\begin{proof}
By Theorem~\ref{thm:extshemersewing}, this condition is sufficient.
To find necessary conditions, we use that $\cP[\cF]$ is neighborly if and only if every circuit of $\cP[\cF]$ is balanced. 

The proof is by induction on $\rd$. For the base case $\rd=3$ just observe that neighborly matroids of rank $3$ are polygons, and the only 
flags that yield a polygon with one extra vertex are of the form $\{x\}\subset \{x,y\}$ or just $\{x,y\}$, where $\{x,y\}$ is an edge of the polygon.

Assume then that $\rd>3$. By definition, $\cP[\cF]$ is the lexicographic extension $\cP[p]$, with $p$ sewn through~$\cF$. Therefore, \(p=[a_1^{+},\;a_2^{\ep_2},\dots,\;a_{\rd}^{\ep_\rd}]\). 
Let $X\in\ci(\cP[\cF])$ be a circuit with $\{p,a_1\}\subset\underline X$.
Since $p$ and $a_1$ are $(-1)$-inseparable by Lemma~\ref{lem:leinseparable}, $X(p)=-X({a_1})$. Hence, if $X$ is balanced, so is $X\setminus \{p,a_1\}$. Now $X\setminus \{p,a_1\}$ is a circuit of $\cP[\cF]/\{p,a_1\}$, and  all  circuits of $\cP[\cF]/\{p,a_1\}$ arise this way. Hence $\cP[\cF]/\{p,a_1\}$ is neighborly.

By Proposition~\ref{prop:allquotientsofle}, \begin{equation*}\label{eq:Pquopa1}\cP[\cF]/\{p,a_1\}\simeq \cP/\{a_1,a_2\}[a_3^{-\ep_2\ep_3},\;a_4^{-\ep_2\ep_4},\;,\dots,\;a_{\rd}^{-\ep_2\ep_\rd}],\end{equation*}
where the second extension is by $a_2$. Hence, by Lemma~\ref{lem:leinseparable}, $a_2$ and $a_3$ are $(\ep_2\ep_3)$-inseparable in $\cP[\cF]/\{p,a_1\}$, which is a neighborly matroid of odd rank and corank at least~$2$. By Lemma~\ref{lem:balonlycovar}, $\ep_2\ep_3=-$.

In particular, either $(\ep_2, \ep_3)=(+,-)$, or $(\ep_2, \ep_3)=(-,+)$. The first option implies that $F_1=\{a_1,a_2\}$, and the second one that $F_1=\{a_1\}$ and $F_2=\{a_1,a_2\}$. 

Since $(\cP[\cF]/\{p,a_1\})\setminus a_2\simeq\cP/\{a_1,a_2\}$ by Lemma~\ref{lem:contractdeletele}, if $\cP[\cF]/\{p,a_1\}$ is neighborly, then $\cP/\{a_1,a_2\}$ must be neighborly and hence $F:=\{a_1,a_2\}$ must be a universal edge of $\cP$ that belongs to $\cF$. 

Finally, observe that $\cP[\cF]/F=(\cP/F)[\cF/F]$ is a matroid of rank $\rd -2$. By induction, $\cF/F$ contains a universal subflag. The union of $F$ with each universal face in $\cF/F$ is a universal face of $\cP$ in $\cF$, which finishes the proof.
\end{proof}

\section{The Gale Sewing Construction}\label{sec:thethm}

In this section, we present a different method to construct neighborly matroids. It is also based on lexicographic extensions, but
 works in the dual, that is, it extends balanced matroids to new balanced matroids.
The key ingredient is the Double Extension Theorem~\ref{thm:thethm}, which shows how to perform double element extensions that preserve balancedness. Before proving it, we need a small lemma.
\begin{lemma}\label{lem:quotientsofGalesewnareGalesewn}
Let $\cM$ be a uniform oriented matroid of rank $r$, let $a_1\dots a_r$ be elements of $\cM$ and $\ep_1,\dots,\ep_r$ be signs. If  $p$, $q$, $p'$ and $q'$ are defined as
\begin{align*}
p&=[a_1^{\ep_1},a_2^{\ep_2},\dots,a_r^{\ep_r}],&q&=[p^-,a_1^{-},\dots,a_{r-1}^{-}];\\
p'&=[a_2^{-\ep_1\ep_2},\dots,a_r^{-\ep_1\ep_r}],& q'&=[p'^{-},\dots,a_{r-1}^{-}],
\end{align*}
then \[
\left(\cM[p][q]\right)/q\ \simeq\ \left(\cM/a_1\right)[p'][q'].
\]
\end{lemma}
\begin{proof}
Repeatedly applying Proposition~\ref{prop:allquotientsofle}:
\begin{align*}
\left(\cM[p][q]\right)/q&= \big(\cM\underbrace{[a_1^{\ep_1},a_2^{\ep_2},\dots,a_r^{\ep_r}]}_{p}\underbrace{[p^-,a_1^{-},\dots,a_{r-1}^{-}]}_{q}\big)/q\\
&\stackrel{\varphi}{\simeq} \big(\cM\underbrace{[a_1^{\ep_1},a_2^{\ep_2},\dots,a_r^{\ep_r}]}_{p}/p\big)\underbrace{[a_1^{-},\dots,a_{r-1}^{-}]}_{\varphi(p)=q'}\\
&\stackrel{\psi}{\simeq} \big(\cM/a_1\big)\underbrace{[a_2^{-\ep_1\ep_2},\dots,a_r^{-\ep_1\ep_r}]}_{\psi(a_1)=p'}\underbrace{[\psi(a_1)^{-},\dots,a_{r-1}^{-}]}_{q'}.\qedhere
\end{align*}
\end{proof}

\begin{theorem}[Double Extension Theorem]\label{thm:thethm}
Let $\cM$ be a uniform balanced oriented matroid of rank $r$. For any sequence $a_1\dots a_r$ of elements of~$\cM$ and any sequence $\ep_1,\dots,\ep_r$ of signs,
consider the lexicographic extension 
\begin{itemize}
\item $\cM[p]$ of $\cM$ by $p=[a_1^{\ep_1},a_2^{\ep_2},\dots,a_r^{\ep_r}]$, and 
\item $\cM[p][q]$ of $\cM[p]$ by $q=[p^-,a_1^{-},\dots,a_{r-1}^{-}]$; 
\end{itemize}
then the oriented matroid $\cM[p][q]$ is balanced.
\end{theorem}
\begin{proof}
The proof is by induction on $r$ (it is trivial for $r=0$). For $r\geq1$ we check that every cocircuit $\tilde C$ of $\cM[p][q]$ is balanced. That is, for each cocircuit $\tilde C\in\co(\cM[p][q])$, we prove that $\ffloor{n-r+1}{2}\leq |\tilde C^+|\leq\fceil{n-r+1}{2}$, where $n$ is the number of elements of $\cM[p][q]$ and $\tilde C^+=\set{e\in E}{C(e)=+}$. 

If $\tilde C(p)\neq 0$ and $\tilde C(q)\neq 0$ then, by the definition of lexicographic extension, there is a cocircuit $C$ of~$\cM$ such that $\restriction{\tilde C}{\cM}=C$ and $\tilde C(p)=-\tilde C(q)$. Hence $|\tilde C^+|=|C^+|+1$, and it is balanced because $C$ is a balanced circuit of $\cM$ (observe that $\cM$ has $n-2$ elements).

The cocircuits $\tilde C$ with $\tilde C(p)=0$ correspond to cocircuits of $(\cM[p][q])/p$, and those with $\tilde C(q)=0$ correspond to cocircuits of $(\cM[p][q])/q$. Therefore, it is enough to prove that $(\cM[p][q])/p$ and $(\cM[p][q])/p$ are balanced.
By Proposition~\ref{prop:allquotientsofle} and Lemma~\ref{lem:quotientsofGalesewnareGalesewn}
\[(\cM[p][q])/p\simeq (\cM[p][q])/q\simeq(\cM/a_1)\underbrace{[a_2^{-\ep_1\ep_2},\dots,a_r^{-\ep_1\ep_r}]}_{p'}[{p'}^-,a_2^-,\dots,a_{r-1}^-],\]
which is a double extension of the balanced matroid $\cM/a_1$ of rank $r-1$, and therefore a balanced matroid by induction.
\end{proof}
\begin{figure}[htpb]
\centering
\includegraphics[width=.5\textwidth]{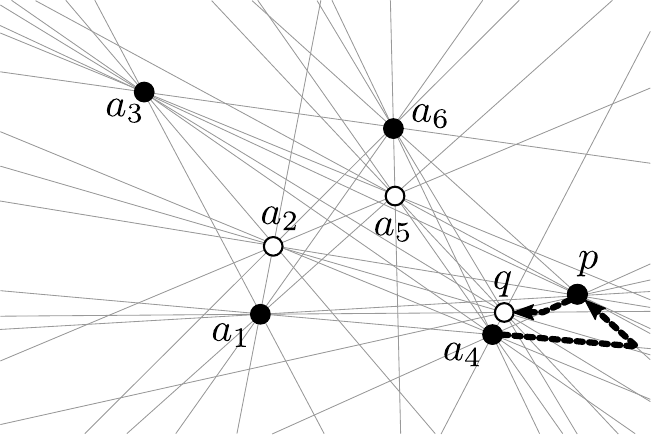}
\caption{The double lexicographic extension of an affine Gale balanced diagram by $p=[a_4^+,a_1^-,a_6^+]$ and $q=[p^-,a_4^-,a_1^-]$, which is also balanced.}\label{fig:galesewingexample}
\end{figure}

If $V$ is a balanced vector configuration, the proof that $V[p][q]$, its lexicographic extension by $p=[a_1^{\ep_1},\dots,a_r^{\ep_r}]$ and $q=[p^-,\dots,a_{r-1}^{-}]$, is also balanced is very easy to understand. Every hyperplane $H$ spanned by a subset of $V$ defines a cocircuit of $V[p][q]$.
The signature of the extension by $q$ implies that if $p\in H^\pm$ then $q\in H^\mp$, and hence $q$ balances the discrepancy created by $p$ on this hyperplane.
The other hyperplanes are checked inductively. Indeed, for a hyperplane $H$ that contains~$p$ but neither~$a_1$ nor~$q$, the fact that $p$ and $a_1$ are inseparable implies that except for~$a_1$, $H$ looks like a hyperplane spanned by $V$ containing~$a_1$.
Hence $q$ must balance the discrepancy created by $a_1$.
For hyperplanes that go through $p$ and $a_1$ but neither $a_2$ nor $q$, $q$ balances the discrepancy created by~$a_2$; and so on.

Figure~\ref{fig:galesewingexample} displays an example of such a double extension on an affine Gale diagram. 
The reader is invited to follow this justification in the picture (for example, by comparing the hyperplanes spanned by $\{a_4,a_i\}$ with the hyperplanes spanned by $\{p,a_i\}$) and to check how all cocircuits in the diagram are balanced.

\begin{corollary}\label{cor:primalGaleSewing}
For any neighborly matroid $\cP$ of rank $\rd$ and $n$ elements there is a neighborly matroid $\tilde \cP$ of rank $\rd+2$ with $n+2$ elements that has an edge $\{x,y\}$ such that $\tilde\cP/\{x,y\}=\cP$.
\end{corollary}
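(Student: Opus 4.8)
This corollary is just the dual restatement of Theorem~\ref{thm:thethm}, so the plan is to pass to the dual oriented matroid, apply one Gale-sewing step there, and translate the conclusion back. First I would set $\cM:=\cP^*$; since a matroid is neighborly precisely when its dual is balanced, $\cM$ is a balanced oriented matroid of rank $r=n-d$ on $n$ elements, and we may assume $\cP$ (hence $\cM$) uniform, so that Theorem~\ref{thm:thethm} applies to $\cM$. Then I would choose any ordered $r$-tuple $(a_1,\dots,a_r)$ of elements of $\cM$ and any sign vector $(s_1,\dots,s_r)\in\{+,-\}^r$ and perform the Gale-sewing double extension: let $p=[a_1^{s_1},\dots,a_r^{s_r}]$ be the corresponding lexicographic extension of $\cM$, and $q=[p^-,a_1^-,\dots,a_{r-1}^-]$ the corresponding lexicographic extension of $\cM\cup p$. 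By Theorem~\ref{thm:thethm}, $\cM\cup p\cup q$ is balanced; it has rank $r$ and $n+2$ elements.

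Next I would dualize back, setting $\tilde\cP:=(\cM\cup p\cup q)^*$. By the same equivalence, $\tilde\cP$ is neighborly, and a rank count gives $\rank(\tilde\cP)=(n+2)-r=d+2$, on $n+2$ elements. Finally I would single out the two new elements, $x:=p$ and $y:=q$. Deleting both of them undoes the two single-element extensions, so $(\cM\cup p\cup q)\setminus\{p,q\}=\cM$; hence, using that deletion and contraction are interchanged by duality (and that contractions commute, so the order of $x$ and $y$ is irrelevant),
\[
\tilde\cP/\{x,y\}=(\cM\cup p\cup q)^*/\{p,q\}=\bigl((\cM\cup p\cup q)\setminus\{p,q\}\bigr)^*=\cM^*=\cP,
\]
which is exactly the claimed identity.

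I do not anticipate a real obstacle: all the mathematical content is already in Theorem~\ref{thm:thethm}, and what is left is bookkeeping. The points that deserve care are: tracking the rank and corank arithmetic so that the two extra elements on the balanced side match a rank increase of two on the neighborly side; making sure the uniformity hypothesis of Theorem~\ref{thm:thethm} is genuinely available (i.e., restricting to uniform $\cP$, which is the case relevant for the constructions that follow); and applying the deletion--contraction duality in the right direction, so that contracting $x$ and $y$ in $\tilde\cP$ returns $\cP$ itself rather than a minor or reorientation of it.
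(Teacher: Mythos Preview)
Your proposal is correct and matches the paper's intent: the paper states the corollary without proof, treating it as an immediate dual reformulation of Theorem~\ref{thm:thethm}, and your argument spells out exactly that translation (dualize, Gale-sew once, dualize back, and use deletion--contraction duality). Your remark about the uniformity hypothesis is apt, since the paper's statement of the corollary omits it while Theorem~\ref{thm:thethm} requires it; this is a minor imprecision in the paper rather than a gap in your reasoning.
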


\begin{remark}
In fact, the proof of Theorem~\ref{thm:thethm} shows a stronger result: For a uniform, not necessarily balanced oriented matroid $\cM$ on which this pair of extensions is performed, the maximal difference between the number of positive and negative elements of a cocircuit (its discrepancy) does not increase.
\end{remark}

This provides the following method to construct balanced matroids (and hence, by duality, to construct neighborly matroids). 
\begin{constr}[Gale Sewing: the family~$\cG$]\label{constr:cG}

\hspace*{\fill}
  \begin{itemize}[leftmargin=1cm, rightmargin=.5cm]
  \item Let $\cM_0$ be the minimal totally cyclic oriented matroid, realized by $\{e_1, \dots, e_r, -\sum_{i=1}^r e_i\}$, where $\left\{e_i\right\}_{1\leq i\leq r}$ is the standard basis.
  \item For $k=1\dots m$:
  \begin{itemize}[leftmargin=.75cm]
  \item Choose different elements $a_{k1},\dots, a_{kr}$ of $\cM_{k-1}$ and choose $\ep_{kj}\in\{+, -\}$ for $j=1\dots r$.
  \item Let $ p_k:=[ a_{k1}^{\ep_{k1}},\dots, a_{kr}^{\ep_{kr}}]$ and $ q_k:=[ p_k^-, a_{k1}^{-},\dots, a_{k(r-1)}^{-}]$.
  \item $\cM_k:=\cM_{k-1}[ p_k][ q_k]$ is balanced because of Theorem~\ref{thm:thethm} and realizable because of Lemma~\ref{lem:realizablele}.
  \end{itemize}
  \item $\cM:=\cM_k$ is a realizable balanced oriented matroid.
  \item $\cP:=\Gale\cM$ is a realizable neighborly oriented matroid. 
  \item Any realization $P$ of $\cP$ is a neighborly polytope in $\cG$.
 \end{itemize}
\end{constr}
We call the double extension of Theorem~\ref{thm:thethm} \defn{Gale Sewing}, and we denote by \defn{$\cG$} the family of combinatorial types of polytopes whose dual is constructed by repeatedly Gale Sewing from $\{e_1,\dots,e_r,-\sum_{i=1}^r e_i\}$. If $P\in\cG$, we will say that $P$ is \defn{Gale sewn}.

\begin{remark}\label{rmk:uflag}
 With the notation of Construction~\ref{constr:cG}, observe that the set $F_j:=\bigcup_{i=0}^{j-1} \{p_{m-i},q_{m-i}\}$ is always a universal face of $\cP$ (that is, $\cP/F_j$ is neighborly), since $\cM\setminus F_j$ is balanced. In particular, $\cF:=\{F_i\}_{i=1}^{m}$ is a universal flag of $\cP$.
\end{remark}

\begin{remark}\label{rmk:odddim}
 In the formulation above, Construction~\ref{constr:cG} only allows for constructing even dimensional neighborly polytopes. To construct odd dimensional polytopes it is enough to do one arbitrary single element extension to one $\cM_i$ for some $0\leq i \leq m$. It is straightforward to check that the matroid obtained after such an extension is balanced (and hence also all its double extensions).
\end{remark}

Cyclic polytopes are a first example of polytopes in $\cG$. The following proposition shows that $\pGale{\cyc{d+1}{n+1}}\simeq \pGale{ \cyc{d}{n}}[a_{n}^-,\dots,a_{d}^-]$. Therefore, every even dimensional cyclic polytope $\cyc{d}{n}$ can be obtained from $\cyc{d-2}{n-2}$ with a double extension in the sense of Theorem~\ref{thm:thethm}:
\[\pGale{ \cyc{d}{n}}\simeq \pGale{ \cyc{d-2}{n-2}}[a_{n-2}^-,\dots,a_{d-2}^-][a_{n-1}^-,\dots,a_{d-1}^-].\]
This implies that cyclic polytopes are in $\cG$ because the base case of Construction~\ref{constr:cG} corresponds to $0$-dimensional cyclic polytopes. Observe that this proposition also explains how to construct odd dimensional cyclic polytopes $\cyc{d}{n}$: their duals correspond to a single lexicographic extension of $\pGale{\cyc{d-1}{n-1}}$.

\begin{proposition}\label{prop:cyclicaregalesewn}
 Let $\cM$ be the dual of the alternating matroid of the cyclic polytope $\cyc{d}{n}$, and let $a_1,a_2,\dots, a_n$ be its elements labeled in cyclic order. Then the dual oriented matroid of $\cyc{d+1}{n+1}$ is $\cM[a_{n+1}]$, the single element extension of $\cM$ by $a_{n+1}=[a_{n}^-,a_{n-1}^-,\dots,a_{d}^-]$.
\end{proposition}
\begin{proof}
We use the following characterization of the circuits of the alternating matroid of rank~$r$
(cf. \cite[Section 9.4]{OrientedMatroids1993}):
the circuits $X$ and $Y$ supported by the $r+1$ elements $x_1<x_2<\dots<x_{r+1}$ (sorted in cyclic order) are those such that $X(x_i)=(-1)^i$ and $Y(x_i)=(-1)^{i+1}$.

If $C$ is a cocircuit of $\cM[a_{n+1}]$ (hence a circuit of its dual) such that $C(a_{n+1})\neq 0$, the signature of the lexicographic extension implies that $C(a_{n+1})$ is opposite to the sign of the largest non-zero element. And thus, by the characterization above, $\cM[a_{n+1}]$ is dual to $\cyc{d+1}{n+1}$. 
\end{proof}

Finally, the following proposition shows that subpolytopes (convex hulls of subsets of vertices) of Gale sewn polytopes are also Gale sewn polytopes. Its proof, which is easy using Proposition~\ref{prop:allquotientsofle} and Lemma~\ref{lem:quotientsofGalesewnareGalesewn}, can be found in Appendix~\ref{sec:appendix}.

\begin{proposition}\label{prop:allquotientsofGalesewnareGalesewn}
 If $P$ is a neighborly polytope in $\cG$, and $a$ is a vertex of~$P$, then $Q=\conv(\verts(P)\setminus a)$ is also a neighborly polytope in $\cG$.
\end{proposition}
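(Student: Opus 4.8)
The plan is to prove Proposition~\ref{prop:allquotientsofGalesewnareGalesewn} by induction on the number of Gale sewing steps used to construct the dual $\cP^*$. The base case is a dual of some $\stc{r}$; deleting an element of $\cP$ corresponds to contracting an element in $\cP^*=\stc{r}$, and a contraction of $\stc{r}$ is a $\stc{r-1}$ (or, if one prefers to keep the rank fixed, one checks directly that the remaining configuration is again a minimal totally cyclic one of the appropriate size), so the base case is in $\cG$ trivially. For the inductive step, write $\cP^* = \cM[x_k][y_k]$ where $\cM = \cM_{k-1}$ is (dual to) a polytope obtained by one fewer Gale sewing step, and $x_k = [e_1^{s_1},\dots,e_r^{s_r}]$, $y_k = [x_k^-,e_1^-,\dots,e_{r-1}^-]$ in the notation of Theorem~\ref{thm:thethm}. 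Deleting an element $e$ from $\cP$ is the same as contracting the corresponding element of $\cP^*$, so I must show that $(\cM[x_k][y_k])/e \in \cG^*$ (meaning its dual is in $\cG$) for every element $e$ of $\cM[x_k][y_k]$.

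Next I would split into cases according to which element $e$ is. If $e = y_k$, then Lemma~\ref{lem:quotientsofGalesewnareGalesewn} gives directly $(\cM[x_k][y_k])/y_k \simeq (\cM/e_1)[p'][q']$, which is a Gale sewing extension of $\cM/e_1$; and $\cM/e_1$ is itself (dual to) a subpolytope of the polytope dual to $\cM$, hence in $\cG$ by the inductive hypothesis, so its Gale sewing extension is in $\cG$ as well. If $e = x_k$, I first use Lemma~\ref{lem:galesewingorder}, equation~\eqref{eq:changepa1}, to rewrite $\cM[x_k][y_k]$ with the roles of $x_k$ and $e_1$ interchanged; under the isomorphism $\psi$ the element $x_k$ is sent to one of $e_1$ (when $s_1=+$) or $q''$ (when $s_1=-$), but in the first case this is an element of the ground set of $\cM$, and we may instead use equation~\eqref{eq:changepq} to also swap $x_k$ and $y_k$ so that in every case the element to be contracted plays the role of the \emph{second} sewn point $q$ — reducing to the already-handled case. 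If $e = e_1$, the analogous manoeuvre via \eqref{eq:changepa1} turns $e_1$ into the role of one of the two sewn points and again reduces to the $y_k$-case. Finally, if $e$ is an element of $\cM$ different from $e_1$, I apply Proposition~\ref{prop:allquotientsofle} twice (contracting $e$ through both lexicographic extensions) to get $(\cM[x_k][y_k])/e \simeq (\cM/e)[p'''][q''']$ for the appropriate truncated signatures; this is a Gale sewing extension of $\cM/e$, and $\cM/e$ is in $\cG$ by the inductive hypothesis, finishing the step.

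The main obstacle I anticipate is purely bookkeeping: one must be careful that after contracting, the truncated signatures $[p''',q''']$ really do have the shape required by the definition of Gale sewing — namely $q$ must be $[p^-, (\text{first } r-2 \text{ base elements})^-]$ relative to the \emph{new} first sewn point — and that the rank has dropped by exactly one so that the cardinalities still match the $\stc{r}$ from which one builds. Verifying that eq.~\eqref{eq:Mmode} of Proposition~\ref{prop:allquotientsofle} applied to both extensions simultaneously yields precisely such a pair (this is exactly the content of Lemma~\ref{lem:quotientsofGalesewnareGalesewn} for the $q$-case, and an easy variant for the generic-$e$ case) is where the care is needed, but no genuinely new idea is required. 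A secondary subtlety is the case analysis for contracting one of $x_k$, $y_k$, $e_1$: here one should phrase the reduction cleanly by invoking Lemma~\ref{lem:galesewingorder} to declare that, up to isomorphism, contracting any one of the three "special" elements $\{e_1, x_k, y_k\}$ is equivalent to contracting $y_k$, so that only the $y_k$-case and the generic-$e$ case need separate treatment.
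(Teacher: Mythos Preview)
Your proposal is correct and follows the same approach as the paper: induction reducing $(\cM[p][q])/e$ to a Gale sewing extension of a contraction of $\cM$, via Lemma~\ref{lem:quotientsofGalesewnareGalesewn}, Lemma~\ref{lem:galesewingorder}, and Proposition~\ref{prop:allquotientsofle}. The paper's case split is marginally cleaner---it treats all $e=a_i$ (not just $a_1$) directly via Proposition~\ref{prop:allquotientsofle} without detouring through Lemma~\ref{lem:galesewingorder}, and for $e=p$ it invokes Lemma~\ref{lem:galesewingorder} once rather than your two-step reduction---but the substance is identical; note also that for $e=e_i$ with $2\le i\le r-1$ you need eq.~\eqref{eq:Mmoda} rather than eq.~\eqref{eq:Mmode}, since $e_i$ appears in the signatures.
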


\subsection{Combinatorial description of the polytopes in $\cG$}

Let $P$ be a simplicial polytope that defines an acyclic uniform oriented matroid $\cP$, and let $\cM=\Gale\cP$ be its dual matroid. The essence of Gale Sewing is to construct a new polytope ${\tilde P}$ whose matroid $\tilde \cP$ is dual to $\tilde\cM=\cM[p]$, a lexicographic extension of $\cM$ by $p=[a_1^{\ep_1},a_2^{\ep_2},\dots,a_k^{\ep_k}]$. In this section we will see that the combinatorics of ${\tilde P}$ are described by \defn{lexicographic triangulations} of $P$.

Let $A=\{a_1,\dots,a_n\}$ be the set of vertices of $P\subset \RR^d$. Let $M$ be the $d\times n$ matrix whose columns list the coordinates of the $a_i$'s:
\[
\renewcommand{\arraystretch}{1.5}
       M := 
       \left[
\begin{matrix}
& \kern.4em\vrule height 2ex\kern.2em & \kern.2em\vrule height 2ex\kern.2em& & \vrule height 2ex\kern.6em &\\
&\kern.2em\,a_1\,&\,a_2\,&\dots&\,a_n\,&\\
& \kern.4em\vrule depth 0ex\kern.2em & \kern.2em\vrule depth 0ex\kern.2em& & \vrule depth 0ex\kern.6em &\\
\end{matrix}
 	\right].	
\]

Then there is some small $\delta>0$ such that the point configuration ${\tilde {A}}$ defined by the columns of the following $(d+1)\times(n+1)$ matrix $\tilde M$ is a realization of the set of vertices of~${\tilde {P}}$:
\[
\renewcommand{\arraystretch}{1.5}
      \tilde M := 
\kbordermatrix{
&\tilde{a}_1&\tilde{a}_2&\tilde{a}_3&\dots&\tilde{a}_k&\tilde{a}_{k+1}&\dots &\tilde{a}_n& \omit\vrule &p \\
&\kern.4em\vrule height 2ex\kern.4em & \kern.4em\vrule height 2ex\kern.4em & \kern.4em\vrule height 2ex\kern.4em& & \vrule height 2ex\kern.6em & \vrule height 2ex\kern.6em & & \kern.4em\vrule height 2ex\kern.4em & \omit\vrule & \kern.4em\vrule height 2ex\kern.4em \\
&\kern.4ema_1&a_2&a_3&\dots&a_k&a_{k+1}&\dots &a_n& \omit\vrule & \veczero \\
&\kern.4em\vrule depth 1ex\kern.4em & \kern.4em\vrule depth 1ex\kern.4em & \kern.4em\vrule depth 1ex\kern.4em& & \vrule depth 1ex\kern.6em & \vrule depth 1ex\kern.6em & & \kern.4em\vrule depth 1ex\kern.4em& \omit\vrule & \kern.4em\vrule depth 1ex\kern.4em \\
\cline{2-11}
&-\ep_1&-\ep_2 \delta &-\ep_3\delta^2& \dots &-\ep_k\delta^{k-1}&0\kern.4em&\dots&0& \omit\vrule &\kern.2em 1\kern.2em
}.
\]

Geometrically, each point $a_i\in A\subset \RR ^d$ is lifted to a point $\tilde {a}_i\in\tilde{A}\subset \RR^{d+1}$ with a height that depends on the signature of the lexicographic extension. Namely, $\tilde{a}_i=\binom{a_i}{ -\ep_i\delta^{i-1}}$ for $i\leq k$ and $\tilde{a}_i=\binom{a_i}{0}$ otherwise. Moreover, $p$ is added to $\tilde{A}$ with coordinates $\binom{\veczero}{1}$.
The vertex figure of $p$ in~${\tilde {P}}$ is combinatorially equivalent to $P$. That is, the faces of ${\tilde{P}}$ that contain $p$ are isomorphic to pyramids over faces of $P$. 
On the other hand, the faces of~${\tilde {P}}$ that do not contain $p$ correspond to faces of a regular subdivision of $P$: the \defn{lexicographic subdivision} of $P$ on $[a_1^{-\ep_1},a_2^{-\ep_2},\dots,a_k^{-\ep_k}]$. When $a_1\dots a_k$ form a basis, this subdivision is a triangulation. A concrete example is depicted in~Figure~\ref{fig:lifting}.

\begin{figure}[htpb]
\begin{center}
\centering
\begin{tabular}{m{.2\textwidth}m{.2\textwidth}m{.2\textwidth}}
 \includegraphics[width=.2\textwidth]{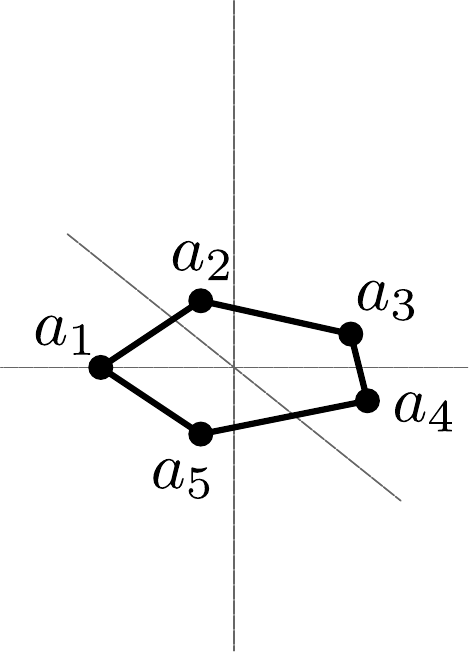}&
 \includegraphics[width=.2\textwidth]{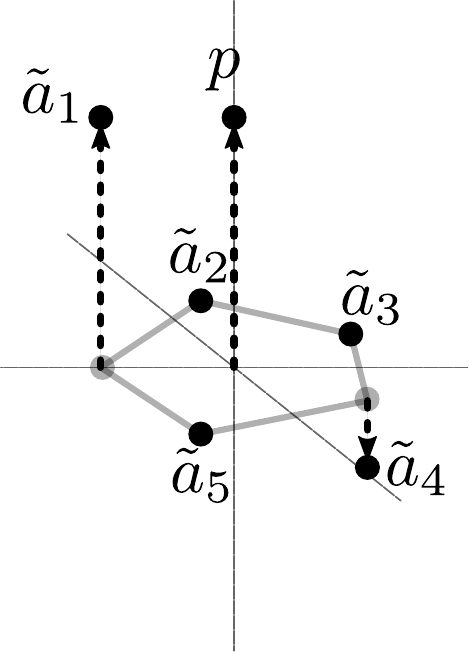}&
 \includegraphics[width=.2\textwidth]{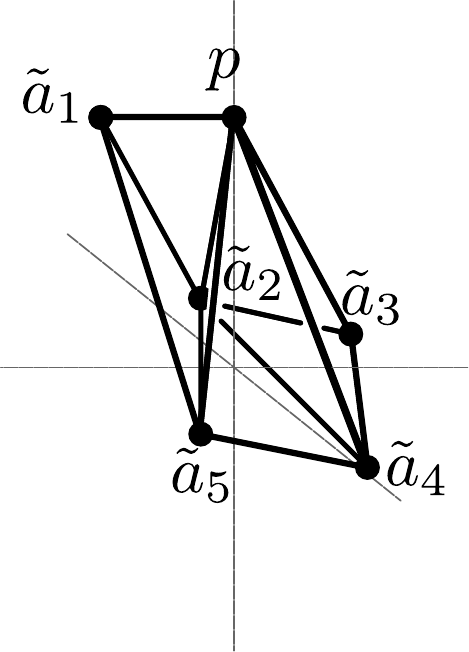}\\
$P=\conv(A)$&Lifting $A$ to $\tilde A$&$\tilde P=\conv(\tilde A)$
\end{tabular}
{\begin{tabular}{m{.22\textwidth}}
 \centering\includegraphics[width=.12\textwidth]{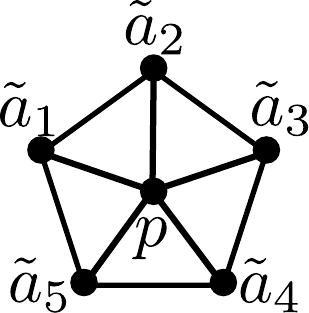}\\
Upper envelope of~$\tilde P$\\
 \centering\includegraphics[width=.12\textwidth]{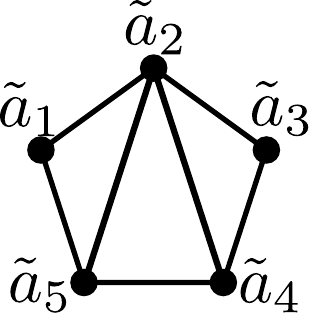}\\
Lower envelope of~$\tilde P$
\end{tabular}}
\end{center}
 \caption{The lifting of a pentagon $P=\conv({A})$ to ${\tilde P}=\conv({\tilde A})$ when $\Gale{{\tilde A}}=\Gale{A}[p]$ and $p=[a_1^-,a_4^+]$. Its upper envelope are pyramids over facets of $P$, while the lower envelope is the lexicographic triangulation of $P$ on $[a_1^+,a_4^-]$.}
 \label{fig:lifting}
\end{figure}

Our formulation of the definition of lexicographic subdivision is based on~\cite{DeLoeraRambauSantosBOOK}. However we use a different ordering, the same as in~\cite{Santos2002}, that mirrors the definition of lexicographic extension (with opposite signs). See also~\cite{Lee1991}.

\begin{definition}
Let $P$ be a $d$-polytope with $n$ vertices $a_1,\dots,a_n$. The \emph{lexicographic subdivision} of $P$ on $[a_1^{\ep_1},a_2^{\ep_2},\dots,a_k^{\ep_k}]$, where $\ep_i=\pm 1$, is defined recursively as follows.
\begin{itemize}
 \item If $\ep_1=+1$ (\emph{pushing}), then the lexicographic subdivision of $P$ is the union of the lexicographic subdivision of $P\setminus a_1$ on $[a_2^{\ep_2},\dots,a_k^{\ep_k}]$, and the simplices joining $a_1$ to the (lexicographically subdivided) faces of $P\setminus a_1$ visible from it.
 \item If $\ep_1=-1$ (\emph{pulling}), then the lexicographic subdivision of $P$ is the unique subdivision in which every maximal cell contains $a_1$ and which, restricted to each proper face $F$ of $P$, coincides with the lexicographic subdivision of that face on $[a_2^{\ep_2},\dots,a_k^{\ep_k}]$.
\end{itemize}
\end{definition}

\begin{remark}
 The resemblance with Sanyal and Ziegler's description of the vertex figures of the neighborly cubical polytopes in~\cite{SanyalZiegler2010} is not a coincidence. Indeed, the Gale duals of those vertex figures are lexicographic extensions of the dual of a fixed neighborly polytope.
\end{remark}

\begin{remark}\label{rmk:inscribable}
 The inscribability of the neighborly polytopes in $\cG$ can be proved with this primal interpretation of Gale Sewing. For this, the key observation in~\cite{GonskaPadrol} is that the pushing triangulation induced by the Double Extension Theorem~\ref{thm:thethm} can always be realized as a Delaunay triangulation.
\end{remark}

\section{Comparing and combining the constructions}\label{sec:comparing}

In this section we compare and combine the construction techniques for neighborly polytopes, which are strongly related. 

\subsection{Extended Sewing and Omitting is included in Gale Sewing}

Our first goal is to prove Corollary~\ref{cor:cOsubsetcG}, that states that if a neighborly polytope~$P$ is built via Extended Sewing and Omitting (Construction~\ref{constr:cO}), then~$P$~can also be built with Gale Sewing (Construction~\ref{constr:cG}).
For that we will need the following theorem, which implies that the contraction and deletion of an element determine an oriented matroid up to the reorientation of that element.

\begin{theorem}[{\cite[Theorem 4.1]{RichterGebertZiegler1994}}]\label{thm:minorsfixmatroid}
Let $\cM'$ and $\cM''$ be two oriented matroids with the same ground set $E$, of respective ranks $\rd$ and $\rd-1$, 
such that $\cov(\cM'')\subseteq\cov(\cM')$. Then there is an oriented matroid $\cM$ with ground set $E\cup\{p\}$ that fulfills $\cM\setminus p =\cM'$ and $\cM/p=\cM''$. The oriented matroid $\cM$ has rank $\rd$ and is unique up to reorientation of $p$.
\end{theorem}

\begin{corollary}\label{cor:minorsfixmatroid}
Let $\cM$ and $\cM$ be oriented matroids on a ground set $E$. If $\cM\setminus p= \cM'\setminus p$ and $\cM/ p= \cM'/ p$, then $\cM$ and $\cM'$ coincide up to the reorientation of $p$.

If additionally there is an element $q\in E$ and some $\alpha=\pm 1$ such that $p$ and $q$ are $\alpha$-inseparable in both $\cM$ and $\cM'$, then $\cM= \cM'$.
\end{corollary}

 \begin{theorem}\label{thm:cOsubsetcG}
For any uniform neighborly matroid $\tilde \cP$ in $\cE$ of rank $\rd$ with $n$ elements, and any universal flag $\tilde \cF=\{\tilde F_i\}_{i=1}^{m}$ of $\tilde \cP$ derived from Remark~\ref{rmk:extnewuniflags}, where $m=\ffloor{\rd-1}{2}$ and $\tilde F_j=\bigcup_{i=0}^{j-1} \{\tilde p_{m-i}, \tilde q_{m-i}\}$, there is a sequence of balanced matroids $\tilde \cM_k$, for $0\leq k\leq m$, such that:
 \begin{enumerate}
  \item $\tilde \cM_m=\Gale {\tilde \cP}$,
  \item $\tilde \cM_0$ has rank $\rr=n-\rd$ and $n-2m$ elements, and
  \item for $0<k\leq m$,  $\tilde \cM_k=\tilde \cM_{k-1}[\tilde p_{k}][\tilde q_{k}]$ is a double extension as in Theorem~\ref{thm:thethm}.
 \end{enumerate}
 \end{theorem}

 \begin{proof}
 The proof is by induction on $n$. The base case is when $n=\rd$. Then both~$\Gale {\tilde \cP}$ and $\tilde \cM_m$ have rank~$0$, and the claims follow trivially. 
 
 If $n\geq \rd$, then $\tilde \cP=\cP[\cF']$ where an element $p$ is sewn onto some~$\cP\in\cE$ through some flag~$\cF'$ that contains a universal subflag $\cF$. By induction hypothesis we can assume that $\cP$, $\cF$ and $\cF'$ fulfill:
 \begin{itemize}
  \item $\cP$ has rank $\rd$ and $n-1$ elements. Its dual $\Gale\cP$ equals $\cM_m$ for a sequence of matroids $\cM_k$ for $0\leq k\leq m$ constructed as follows:
    $\cM_0$ is a uniform balanced matroid of rank $\rr=n-\rd-1$ and $n-2m-1$ elements, and for $0< k\leq m$ \begin{equation}\label{eq:defMk}\cM_k:=\cM_{k-1}[p_k][q_k],\end{equation} for lexicographic extensions defined by
\begin{align}
p_k&:=[a_{k1}^{\ep_{k1}},\dots,a_{kr}^{\ep_{kr}}],& q_k&:=[p_k^-,a_{k1}^{-},\dots,a_{k(r-1)}^{-}];\label{eq:defpkqk}
\end{align}
where the $a_{ij}$ are pairwise distinct elements of $\cM_{i-1}$. 

\item $\cF$ is of the form $\cF=\{F_i\}_{i=1}^{m}$, where $F_j=\bigcup_{i=0}^{j-1} \{p_{m-i},q_{m-i}\}$ (that is, $F_{m-k}=\{p_m,q_m,\dots,p_{k+1},q_{k+1}\}$). 

\item The flag $\cF'$ contains $\cF$ as a subflag. By Lemma~\ref{lem:galesewingorder} we assume without loss of generality that all split faces in $\cF'$ are $q_i$-split.
 \end{itemize}

The proof needs some further notation. Let $\cP_k:=\cP/F_{m-k}$ for $k=0,\dots,m$, and observe that $\cP_k=\Gale{\cM_k}$, for all~$k$, by deletion-contraction duality. Moreover, we define the sets $\tilde F_{j+1}$, all containing the sewn element $p$, as $\tilde F_{j+1}:=F_{j}\cup q_{m-j}\cup p$ (that is $\tilde F_{m-k}=\{p_m,q_m,\dots,p_{k+2},q_{k+2},q_{k+1},p\}$). We denote $\tilde \cP_k=\tilde \cP/\tilde F_{m-k}$ and observe that by Lemma~\ref{lem:quotientsofextShemersewing}, $\tilde \cP_k= \cP_k[\cF'/F_{m-k}]$. Here in $\cP_k[\cF'/F_{m-k}]$, the sewn vertex is $p_{k+1}$, and thus
$\tilde \cP_k\setminus p_{k+1}= \cP_k$.  We occasionally abbreviate $p=p_{m+1}$.

Now, set $\tilde \cM_0=\Gale{\cP_0}$ and for $0<k\leq m$ let 
\begin{align*}
 q_k&:=
\begin{cases} [ p_k^+,{( a_{k1})}^{-\ep_{k1}},\dots,{( a_{kr})}^{-\ep_{kr}}]& \text{if $F_{m-k+1}$ is not split in $\cF'$,}\\
[ p_k^-,{( a_{k1})}^{\ep_{k1}},\dots, {( a_{kr})}^{\ep_{kr}}]& \text{if $F_{m-k+1}$ is split in $\cF'$.}
\end{cases}\\
  p_{k+1}&:=[ q_k^-,p_k^-,{( a_{k1})}^{-},\dots,{( a_{k(r-1)})}^{-}],\\
 \tilde \cM_{k}&:=\tilde \cM_{k-1}[ q_k][ p_{k+1}].
\end{align*}

With this notation, we claim that 
\(
               \tilde \cM_k=\Gale{\tilde\cP_k}
\) 
 (cf. Figure~\ref{fig:ShemerVsMe}).
We prove this claim by induction on $k$, and the base case $k=0$ is true by construction. 

\begin{figure}[htpb]
\begin{center}

\begin{tabular}{m{.18\textwidth}m{.1\textwidth}m{.18\textwidth}m{.37\textwidth}}
\includegraphics[width=.2\textwidth]{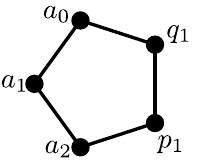}&$ \xleftrightarrow[\Gale{\cP}=\cM_1]{\text{  duality  }}$&
\includegraphics[width=.2\textwidth]{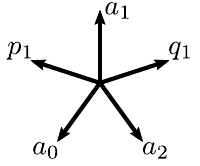}
&
$\cM_0=\{a_0,a_1,a_2\}$\newline $\cM_1=\cM_0[a_2^-,a_1^-][p_1^-,a_2^-]$
\\
\centering$\cP$\\
\centering {\rotatebox{90}{$\xleftarrow[{\, p=[q_1^+,p_1 ^+,a_1^-]\,}]{\text{sewing}}$}} &&&
\\
\centering$\tilde \cP=\cP[p]$\\
\includegraphics[width=.2\textwidth]{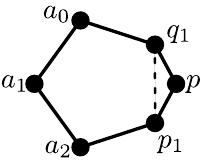}
&$\xleftrightarrow[\Gale{\tilde \cP}\simeq\tilde \cM_1]{\text{  duality  }}$
&\includegraphics[width=.2\textwidth]{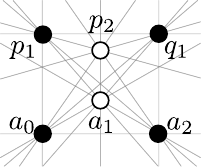}
&$\tilde{\cM}_0=\{{p}_1,{a}_0,{a}_1,{a}_2\}$\newline 
$\tilde{\cM}_1=\tilde{\cM}_0[{p}_1^+,{a}_2^+,{a}_1^+][\tilde{q}_1^-,{p}_1^-,{a}_2^-]$
\end{tabular}
\end{center}
\caption{We reach the lower left figure by two paths: First (starting in the top right), $\cM$~is constructed from $\cM_0$ after Gale Sewing $p_1=[a_2^-,a_1^-]$ and $q_1=[p_1^-,a_2^-]$. The dual of $\cM_1$ is $\cP$ (top left). Then $\tilde{\cP}$ (lower left) is constructed from~$\cP$ by sewing $p$ onto the flag formed by the universal edge $\{p_1,q_1\}$ (which is not split). 
\newline
In the second path (lower right), $\tilde{\cM_1}$~is constructed from $\tilde\cM_0$ by Gale Sewing $\tilde{q}_1=[{p}_1^+,{a}_2^+,{a}_1^+]$ and ${p}_2=[\tilde{q}_1^-,{p}_1^-,{a}_2^-] $; then we dualize to get $\Gale{\tilde{\cM_1}} =\tilde \cP$.
}
 \label{fig:ShemerVsMe}
\end{figure}

Let $k>0$ and assume that $\tilde \cM_{k-1}=\Gale{\tilde\cP_{k-1}}$. The proof uses Corollary~\ref{cor:minorsfixmatroid}
twice 
and relies on the following facts (our claim is the final fact \ref{it:Mk}):
\begin{enumerate}
 [label=\textbf{(\Alph*)}]
 
\item\label{it:Mkmodpk+1} \textbf{\boldmath$\tilde \cM_k/p_{k+1}=\Gale{\tilde\cP_k}/p_{k+1}$.}

 Since by definition $\tilde\cP_k\setminus p_{k+1}=\cP_k$, then  $\Gale{\tilde\cP_k}/p_{k+1}=\Gale{\cP_k}=\cM_k$ and we only need to prove that \begin{equation}\label{eq:quotientsofGaleareShemer}\tilde\cM_k/p_{k+1}= \cM_k.\end{equation}
By Lemma~\ref{lem:quotientsofGalesewnareGalesewn}, 
 $(\tilde \cM_k/p_{k+1})= (\tilde \cM_{k-1}/p_{k})[\tilde a_{k1}^{\ep_{k1}},\dots,\tilde a_{kr}^{\ep_{kr}}][{x'_k}^-,\tilde a_{k1}^{-},\dots,\tilde a_{k(r-1)}^{-}]$.
Then we get \eqref{eq:quotientsofGaleareShemer} combining that $\tilde \cM_{k-1}/p_k= \cM_{k-1}$ (by the induction hypothesis) with the equations \eqref{eq:defMk} and \eqref{eq:defpkqk} that define $\cM_k$.

 \item \label{it:Mkinsep} \textbf{\boldmath$p_{k+1}$ and $q_k$ are $(+1)$-inseparable in $\tilde\cM_k$ and~$\Gale{\tilde\cP_k}$.}

  Follows from Lemma~\ref{lem:leinseparable} and the definitions of $\tilde\cM_k$ and~${\tilde\cP_k}$.

 \item\label{it:Mk-pk+1modqk} \textbf{\boldmath$(\Gale {\tilde \cP_k}\setminus p_{k+1})/ q_{k}=(\tilde \cM_k\setminus p_{k+1})/ q_{k}$.}

 By Lemma~\ref{lem:contractdeletele},
 \[
(\tilde \cM_k\setminus p_{k+1})/ q_k=(\tilde \cM_k /  p_{k+1})\setminus  q_k, \text{ and }  
 (\Gale{\tilde\cP_k}\setminus p_{k+1})/q_k=(\Gale{\tilde\cP_k}/p_{k+1})\setminus q_k.
 \]
Now $(\Gale {\tilde \cP_k}/ p_{k+1})\setminus q_{k}=(\tilde \cM_k/ p_{k+1})\setminus q_{k}$ follows directly from \ref{it:Mkmodpk+1}.

  \item \label{it:Mk-pk+1-qk} \textbf{\boldmath$(\Gale {\tilde \cP_k}\setminus p_{k+1})\setminus q_{k}=(\tilde \cM_k\setminus p_{k+1})\setminus q_{k}$.}
 
 This is direct by the induction hypothesis, since
 \[(\tilde \cM_k\setminus p_{k+1})\setminus q_k =\tilde \cM_{k-1}=\Gale{\tilde\cP_{k-1}}=\Gale{(\tilde\cP_{k}/\{q_k,p_{k+1}\})}=(\Gale{\tilde\cP_{k}}\setminus p_{k+1})\setminus q_k.\]

\item\label{it:Mk-pk+1insep} 
 \textbf{\boldmath$q_k$ and  $p_k$ are $\alpha$-inseparable in $\tilde \cM_k\setminus p_{k+1}$ and $\Gale{(\tilde\cP_{k}/ p_{k+1})}$, where $\alpha:=-1$ if $F_{m-k+1}$ is not split and $\alpha:=+1$ otherwise.}

If $F_{m-k+1}$ is not split then $\tilde q_k$ is $(-1)$-inseparable with $\tilde p_k$ in $\tilde \cM_k\setminus p_{k+1}=\tilde \cM_{k-1}[q_k]$ by construction. 
Moreover, by Proposition~\ref{prop:allquotientsofle} \[\tilde\cP_{k}/ p_{k+1}=\big(\cP_{k}\underbrace{[\cF'/F_{m-k}]}_{p_{k+1}}\big)/p_{k+1}= \big(\cP_{k}/ p_{k}\big)\underbrace{[q_k^-,\dots]}_{p_k}.\]
In this last expression the sewn vertex is $p_k$, which is $(+1)$-inseparable from~$q_k$ by Lemma~\ref{lem:leinseparable}. This means that $q_k$ is $(-1)$-inseparable with $p_k$ in $\Gale{(\tilde\cP_{k}/ p_{k+1})}$ because of Lemma~\ref{lem:insep}.

The proof for the case when $F_{m-k+1}$ is $q_k$-split is analogous.
  
\item \label{it:Mk-pk+1} \textbf{\boldmath $\tilde \cM_k\setminus p_{k+1}=\Gale{\tilde\cP_k}\setminus p_{k+1}$.}

This is a direct consequence of Corollary~\ref{cor:minorsfixmatroid} by \ref{it:Mk-pk+1-qk}, \ref{it:Mk-pk+1modqk} and \ref{it:Mk-pk+1insep}.

\item\label{it:Mk} \textbf{\boldmath $\tilde \cM_k\simeq\Gale{\tilde\cP_k}$.}

This follows also from Corollary~\ref{cor:minorsfixmatroid} by \ref{it:Mkmodpk+1}, \ref{it:Mk-pk+1} and \ref{it:Mkinsep}.
\end{enumerate}

We have already seen that $\tilde \cP=\tilde \cP_m$ is Gale sewn, but we have to test our complete induction hypothesis. Namely, it remains to be checked that for each universal flag of $\tilde \cP$ obtained by Remark~\ref{rmk:extnewuniflags}, $\tilde \cP$ can be obtained 
by Gale Sewing the elements in the order marked by the flag. This is a consequence of Lemma~\ref{lem:galesewingorder}, which allows to change the order of the sewings in $\tilde\cM_m$. We omit the details of this easy computation that concludes the proof of Theorem~\ref{thm:cOsubsetcG}.
\end{proof}

\begin{corollary}\label{cor:cOsubsetcG}
 $\cO\subseteq\cG$.
\end{corollary}
\begin{proof}
 By Proposition~\ref{prop:allquotientsofGalesewnareGalesewn}, to prove $\cO\subseteq\cG$ it suffices to see that $\cE\subseteq \cG$. This follows directly from Theorem~\ref{thm:cOsubsetcG}.
 
 Indeed, let $\tilde \cP\in \cE$. With the notation of Theorem~\ref{thm:cOsubsetcG}, if $\rd$ is odd, then $\tilde \cM_0$ is balanced of rank $r$ with $r+1$ elements, which implies that it is the oriented matroid of $\{e_1, \dots, e_r, -\sum_{i=1}^r e_i\}$. Therefore, $\tilde \cP$ is in $\cG$ because it is built using Construction~\ref{constr:cG}. If $\rd$ is even, then $\tilde \cP$ is in $\cG$ in the sense of Remark~\ref{rmk:odddim}.
\end{proof}

\begin{remark}\label{rmk:doesnotgeneralize}
The fact that $\cE\subsetneq\cG$ implies that in some sense Gale Sewing generalizes ordinary (Extended) Sewing. However, it is not true that the Extended Sewing Theorem~\ref{thm:shemersewing} is a consequence of the Gale Sewing Theorem~\ref{thm:thethm}, because there are neighborly matroids that have universal flags but are not in~$\cG$. Hence one can sew on them but they 
cannot be treated with Theorem~\ref{thm:cOsubsetcG}. 
This will become clear in Section~\ref{sec:nonrealizable}, where we work with $\cM^{10}_{425}$, a non-realizable neighborly matroid that has universal flags. Since Gale Sewing (Construction~\ref{constr:cG}) only builds realizable matroids, this matroid is not in $\cG$ and yet one can sew on it. This shows why both constructions are needed.
\end{remark}

\subsection{Some exact numbers}

We have worked with five families of neighborly polytopes:

\begin{description}
 \item[$\cN$] All neighborly polytopes.
 \item[$\cS$] Totally sewn neighborly polytopes (Sewing, Construction~\ref{constr:cS}).
 \item[$\cE$] Neighborly polytopes constructed by Extended Sewing (Construction~\ref{constr:cE}).
 \item[$\cO$] Neighborly polytopes built by Extended Sewing and Omitting (Construction~\ref{constr:cO}).
 \item[$\cG$] Gale sewn neighborly polytopes (Construction~\ref{constr:cG}).
\end{description}

Table~\ref{tb:numcombtypes} contains the exact number of (unlabeled) combinatorial types of $d$-dimensional neighborly polytopes with $n$ vertices in each of these families for the cases $d=4$ and $n=8,9$ and for $d=6$ and $n=10$. Exact numbers for $\cN$ come from~\cite{AltshulerSteinberg1973} and~\cite{BokowskiShemer1987}, exact numbers for $\cS$ and $\cO$ come from~\cite{Shemer1982}. Numbers for $\cG$ and $\cE$ have been computed with the help of \texttt{polymake}~\cite{polymake}.

\begin{table}[htpb]
\centering
  \caption{Exact number of combinatorial types}	\label{tb:numcombtypes}
  \setlength{\tabcolsep}{15pt}
   \renewcommand{\arraystretch}{1.3}
  \begin{tabular}{ c  c c c  c  c  c  c }
    \hline
      $d$& $n$ && $\cS$ & $\cE$ & $\cO$ & $\cG$ & $\cN$\\
    \hline 
      4 & 8 && 3 & 3 & 3 & 3 & 3 \\ 
      4 & 9 && 18 & 18 & 18 & 18 & 23 \\ 
      6 & 10 && 15 & 26 & 28 & 28 & 37 \\ 
    \hline
  \end{tabular}
\end{table}

In view of Table~\ref{tb:numcombtypes}, the known relationships  between these families are summarized in the following proposition.
\begin{proposition}
\(\cS\subsetneq\cE\subsetneq\cO\subseteq\cG\subsetneq\cN.\) 
\end{proposition}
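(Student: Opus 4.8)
The plan is to establish the chain of inclusions and strict inequalities piece by piece, relying heavily on results already proved in the paper together with the data in Table~\ref{tb:numcombtypes}. The statement to prove is $\cS\subsetneq\cE\subsetneq\cO\subseteq\cG\subsetneq\cN$, so there are five assertions: three inclusions and two proper containments not yet witnessed, plus the strictness of the others.

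First I would handle the inclusions that are essentially definitional or immediate. The inclusion $\cS\subseteq\cE$ holds because the totally sewn polytopes $\cS$ are obtained by sewing only through the universal flags coming from Theorem~\ref{thm:shemersewing}, whereas $\cE$ allows sewing through any flag containing a universal subflag (Theorem~\ref{thm:extshemersewing}); thus every construction producing a member of $\cS$ is a special case of a construction producing a member of $\cE$. The inclusion $\cE\subseteq\cO$ is immediate from the definition of $\cO$ as the polytopes obtained by extended sewing \emph{and then} omitting vertices: taking the empty set of omitted vertices shows $\cE\subseteq\cO$. The inclusion $\cO\subseteq\cG$ is the substantive one: by Proposition~\ref{prop:allquotientsofGalesewnareGalesewn}, $\cG$ is closed under taking subpolytopes (omitting vertices), so it suffices to show $\cE\subseteq\cG$, and this is exactly what Proposition~\ref{prop:Mk=Pkstar} accomplishes, modulo the bookkeeping remark immediately following it (that the universal flags produced by Proposition~\ref{prop:extnewunifaces} after sewing are again of the Gale-sewn form, which follows from Lemma~\ref{lem:galesewingorder}). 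Finally $\cG\subseteq\cN$ holds because Gale sewing starts from the realizable (hence matroid of a) totally cyclic configuration $\stc{r}$ and applies Theorem~\ref{thm:thethm} repeatedly, each step preserving balancedness and realizability, so the dual is a realizable neighborly matroid, i.e.\ a neighborly polytope.

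For the strictness statements I would invoke Table~\ref{tb:numcombtypes}. The row $d=6$, $n=10$ gives $|\cS|=15<26=|\cE|$, so $\cS\subsetneq\cE$; it gives $|\cE|=26<28=|\cO|$, so $\cE\subsetneq\cO$; and it gives $|\cG|=28<37=|\cN|$, so $\cG\subsetneq\cN$. (One should note that the numbers in the table count combinatorial types, and since $\cS\subseteq\cE\subseteq\cO\subseteq\cG\subseteq\cN$ as \emph{sets} of combinatorial types, a strict inequality of cardinalities forces a strict containment of sets.) The only containment left as a plain $\subseteq$ rather than $\subsetneq$ is $\cO\subseteq\cG$, for which the table shows equality in all three computed cases, so no separating example is known.

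The main obstacle is $\cO\subseteq\cG$, and within that, $\cE\subseteq\cG$. Everything else is either a definitional unwinding or a citation of the exact enumeration data. The proof of $\cE\subseteq\cG$ rests on Proposition~\ref{prop:Mk=Pkstar}, whose proof is already given, but to apply it one must be careful about two points: that it suffices to treat $\cE$ (not all of $\cO$) thanks to Proposition~\ref{prop:allquotientsofGalesewnareGalesewn}, and that sewing can be iterated inside $\cG$, which requires knowing the universal flags of the sewn polytope are again of Gale-sewn shape --- this is precisely where Lemma~\ref{lem:galesewingorder} is used to permute the order of the double extensions so that the universal flag described by Proposition~\ref{prop:extnewunifaces} takes the required form $T_j=\bigcup_{i=0}^{j-1}\{x_{m-i},y_{m-i}\}$. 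So the proof I would write is short: assemble the inclusions from the cited propositions, observe the set-containments are genuine, and read off strictness from the $d=6$, $n=10$ row of Table~\ref{tb:numcombtypes}.
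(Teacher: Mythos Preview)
Your proposal is correct and matches the paper's approach: the paper presents this proposition as a summary ``in view of Table~\ref{tb:numcombtypes}'' together with the preceding discussion establishing $\cE\subseteq\cG$ (via Proposition~\ref{prop:Mk=Pkstar} and Lemma~\ref{lem:galesewingorder}) and $\cO\subseteq\cG$ (via Proposition~\ref{prop:allquotientsofGalesewnareGalesewn}), with the strict inequalities read off from the $d=6$, $n=10$ row and the remaining inclusions being definitional.
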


This begs the question:

\begin{question}
Is $\cO=\cG$?
\end{question}

\subsection{Non-realizable neighborly oriented matroids}\label{sec:nonrealizable}
Since the only neighborly matroids of rank~$3$ are cyclic polytopes, there are no non-realizable neighborly matroids of rank~$3$. The sphere ``$\cM^{10}_{425}$'' from Altshuler's list~\cite{Altshuler1977} corresponds to a neighborly matroid of rank~$5$ with $10$ elements. In~\cite{BokowskiGarms1987}, this matroid is shown to be non-realizable, thus proving that non-realizable neighborly matroids exist. Kortenkamp's construction~\cite{Kortenkamp1997} can also be used to build non-realizable neighborly matroids of corank $3$. We combine Theorems~\ref{thm:extshemersewing} and~\ref{thm:thethm} to show that there are many non-realizable neighborly matroids. A lower bound for the cardinality of the number of non-realizable neighborly matroids is derived later in Theorem~\ref{thm:nonrealizablebound}.

\begin{theorem}\label{thm:nonrealizable}
 There exists a non-realizable neighborly matroid of rank $\rd$ with $n$ elements for every $\rd\geq 5$ and $n\geq \rd+5$.
\end{theorem}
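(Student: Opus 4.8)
The plan is to build the non-realizable neighborly matroids by starting from a small non-realizable neighborly matroid and then inflating it in both parameters using the two constructions from the previous sections: Gale sewing (Corollary~\ref{cor:primalGaleSewing}) to increase rank and number of elements by two at a time while preserving neighborliness, and the extended sewing construction (Theorem~\ref{thm:extshemersewing}) to increase only the number of elements by one. The crucial point is that both operations, being instances of lexicographic extensions, preserve non-realizability whenever the \emph{minor} witnessing non-realizability survives: a lexicographic extension of a realizable matroid is realizable, so contrapositively, if some contraction (or deletion) of $\tilde{\cM}$ is non-realizable, then $\tilde{\cM}$ itself is non-realizable. Since the Gale-sewn matroid $\tilde{\cP}$ of Corollary~\ref{cor:primalGaleSewing} satisfies $\tilde{\cP}/\{x,y\}=\cP$, non-realizability of $\cP$ is inherited by $\tilde{\cP}$; and likewise a neighborly matroid obtained by sewing contains the original as a minor.

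First I would fix the seed: the rank-$5$, $10$-element non-realizable neighborly matroid ``$M^{10}_{425}$'' from Altshuler's list, whose non-realizability is established in~\cite{BokowskiGarms1987}, handles the base case $d=5$, $n=10=d+5$. Next I would check that this matroid lies in a family to which our constructions apply; in particular it is uniform and neighborly, so Corollary~\ref{cor:primalGaleSewing} gives a uniform neighborly matroid of rank $7$ with $12$ elements containing it as a contraction minor, hence non-realizable; iterating, for every $m\geq 0$ there is a non-realizable uniform neighborly matroid of rank $5+2m$ with $10+2m=(5+2m)+5$ elements. This settles the diagonal $n=d+5$ for all odd $d\geq 5$. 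For even $d$, I would instead apply Corollary~\ref{cor:primalGaleSewing} starting from a rank-$6$ non-realizable neighborly matroid obtained from the seed by one sewing step (Theorem~\ref{thm:extshemersewing}, using a universal flag of the cyclic-like part — or, more simply, by first sewing $M^{10}_{425}$ once to reach rank $5$ with $11$ elements and then contracting the picture appropriately); the point is that Theorem~\ref{thm:extshemersewing} raises the element count by one while keeping neighborliness and, since the old matroid is a minor of the new one, non-realizability is preserved.

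Then, to fill in all $n\geq d+5$ for each fixed $d$, I would use the extended sewing construction repeatedly: given a non-realizable uniform neighborly matroid of rank $d$ with $n$ elements that possesses a universal flag (which the constructed matroids do, by Proposition~\ref{prop:extnewunifaces} and the analysis in Section~\ref{sec:comparing}), sewing a new vertex through that flag yields a uniform neighborly matroid of rank $d$ with $n+1$ elements that again has a universal flag and still contains the old one as a minor, hence is non-realizable. Combining the rank-raising diagonal steps with these element-raising steps reaches every pair $(d,n)$ with $d\geq 5$ and $n\geq d+5$. The one bookkeeping subtlety is parity: Gale sewing changes rank by $2$, so to reach both odd and even ranks one must supply a non-realizable seed in each parity class; the rank-$5$ Altshuler matroid gives the odd case directly, and one extra sewing (or a direct appeal to a known rank-$6$ non-realizable neighborly matroid with $11$ elements) gives the even case.

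The main obstacle I anticipate is precisely this: verifying that at each stage the matroid to which we want to apply the next construction genuinely admits a universal flag (so that Theorem~\ref{thm:extshemersewing} applies) and genuinely retains the non-realizable minor after the contractions performed by Gale sewing interact with the sewing steps. Tracking which elements of the seed matroid survive as an explicit minor through a chain of lexicographic extensions and contractions — and confirming the surviving minor is exactly a non-realizable one rather than a realizable contraction of it — is the delicate part; the preservation-of-realizability lemma and Proposition~\ref{prop:allquotientsofle} (describing contractions of lexicographic extensions) are the tools that make this manageable, but the argument must be set up so that the witnessing minor is never destroyed.
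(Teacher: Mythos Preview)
Your overall strategy --- start from the non-realizable neighborly rank-$5$ matroid $M^{10}_{425}$, inflate via Gale sewing and extended sewing, and propagate non-realizability through the surviving minor --- is exactly the paper's strategy, and your minor-preservation argument for non-realizability is correct. There are, however, two points where your execution diverges from the paper, one of which is a genuine gap.

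\textbf{The even-rank case is a real gap.} You propose to obtain a rank-$6$ seed ``from the seed by one sewing step (Theorem~\ref{thm:extshemersewing})'' or ``by first sewing $M^{10}_{425}$ once to reach rank $5$ with $11$ elements and then contracting''. Neither works: Theorem~\ref{thm:extshemersewing} adds an element but \emph{does not change the rank}, and contracting lowers the rank rather than raising it. So as written you have no construction producing a rank-$6$ non-realizable neighborly matroid, and hence nothing for even $d$. The paper handles this with a different, very short observation: any (uniform) single-element extension of the dual of a neighborly matroid of odd rank $2k+1$ is again balanced, so its dual is neighborly of rank $2k+2$. The reason is that every circuit $X$ of the new primal not containing the new element $p$ is a vector of the old primal $\cP$ with support $2k+3$; by conformal decomposition it contains a circuit $X_1$ of $\cP$ (which is halving since $\cP$ is neighborly of odd rank), and $X$ differs from $X_1$ only in the one remaining coordinate, so $X$ is halving too. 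Applying this once to each odd-rank matroid already constructed immediately covers all even ranks.

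\textbf{The order of operations.} You Gale-sew first (to climb in rank) and then sew (to add elements), which forces you to produce universal flags in arbitrarily high rank --- the obstacle you correctly flag at the end. The paper reverses the order: it first verifies explicitly that $M^{10}_{425}$ has a universal flag $\{0,1\}\subset\{0,1,2,3\}$ (the contractions are octagons), then sews repeatedly in rank $5$, where Proposition~\ref{prop:extnewunifaces} supplies a new universal flag after every step, to obtain non-realizable neighborly rank-$5$ matroids with any $n\ge 10$; only then does it Gale-sew, which needs no universal flag at all. This ordering makes the universal-flag bookkeeping trivial. Your ordering can be made to work too --- the Gale-sewn pairs $\{x_i,y_i\}$ together with the original flag of $M^{10}_{425}$ do assemble into a full universal flag of the Gale-sewn matroid --- but you would still need to supply that argument, and it is strictly more work than the paper's route.
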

\begin{proof}
We start with $\cM^{10}_{425}$. With the vertex labeling of \cite{Altshuler1977}, $\{0,1\}$, $\{2,3\}$, $\{4,5\}$, $\{6,7\}$ and $\{8,9\}$ are universal edges of $\cM^{10}_{425}$ because the corresponding contractions are polygons with $8$ vertices. In particular, $\{0,1\}\subset \{0,1,2,3\}$ is a universal flag. Hence, applying the Extended Sewing Theorem~\ref{thm:extshemersewing} we get many non-realizable matroids of rank~$5$ with $n$ vertices for any $n\geq 10$.

Now, applying to these matroids the Corollary~\ref{cor:primalGaleSewing} of the Gale Sewing Construction, we get non-realizable oriented matroids of rank $5+2k$ and $n$ vertices for any $k\geq 0$ and any $n\geq 10+2k$.

To get non-realizable matroids of even rank, just observe that any single element extension on the dual of a neighborly matroid of rank $2k+1$ yields the dual of a neighborly matroid of rank $2k+2$.
\end{proof}

All neighborly matroids of rank $2m+1$ that have $n\leq2m+3$ vertices are cyclic polytopes. Moreover, all oriented matroids of rank $5$ with $8$ elements are realizable~\cite[Corollary 8.3.3]{OrientedMatroids1993}. 
Hence the first case (of odd rank) that Theorem~\ref{thm:nonrealizable} does not deal with are neighborly matroids of rank $5$ with $9$ elements.

\section{Many neighborly polytopes}\label{sec:counting}

The aim of this section is to find lower bounds for {$\lnei{n}{d}$}\index{$\lnei{n}{d}$}, the number of combinatorial types of vertex-labeled neighborly polytopes with $n$ vertices in dimension~$d$. 
Since two neighborly polytopes with the same combinatorial type have the same oriented matroid (Theorem~\ref{thm:neigharerigid}), it suffices to bound the number of labeled realizable neighborly matroids. 

Our strategy will consist in using the Gale Sewing technique of Theorem~\ref{thm:thethm} to construct many neighborly polytopes in~$\cG$ for which we can certify that their oriented matroids are all different.

We only deal with polytopes and oriented matroids that are labeled. 
Nevertheless, our bounds are so large as to present the same kind of growth as 
the naive bounds for unlabeled combinatorial types obtained by dividing by $n!$. Namely, $\frac{\lnei{n}{d}}{n!}\geq n^{\frac{d-2}{2} n(1+o(1))}$ for fixed dimension~$d>2$ and $n\rightarrow \infty$.

\subsection{Many lexicographic extensions}
A first step is to compute lower bounds for $\lle{n}{r}$, the smallest number of different labeled lexicographic extensions that any balanced matroid of rank~$r$ with $n$ elements must have. Here, a labeled lexicographic extension of~$\cM$ is a lexicographic extension $\cM[p]$ labeled in such a way that the labels of the elements of~$\cM$ are preserved.

There are $2^{r}\!\frac{n!}{(n-r)!}$ different expressions for lexicographic extensions of a rank~$r$ oriented matroid on $n$ elements, yet not all of them represent different labeled oriented matroids. We aim to avoid counting the same extension twice with two different expressions.

\begin{proposition}\label{prop:lble}
 Let $\cM$ be a rank $r>1$ labeled uniform balanced matroid with $n$ elements. If $n-r-1\ge 2$ is even, then there are at least \begin{equation}\label{eq:bndle1}\lle{n}{r}\geq\frac{2n!}{(n-r+1)!}
\end{equation}
different uniform labeled lexicographic extensions of $\cM$.
\end{proposition}
\begin{proof}
We focus only on those extensions where $\ep_i=+$ for all $i$, and show that they are unambiguous except for the last element $a_r^{\ep_r}$.

For this, observe that if $r>1$ and the lexicographic extensions by $[a_1^{+},\dots,a_r^{+}]$ and $[{a'_1}^{+},\dots,{a'_r}^{+}]$ yield the same oriented matroid,  then for every cocircuit $C\in\co(\cM)$ with $C(a_1)\neq 0$ and $C(a_1')\neq 0$, the signature $\sigma:\co(\cM)\rightarrow\{\pm,0\}$ of the lexicographic extension fulfills $\sigma(C)=C(a_1)$ and $\sigma(C)=C(a_1')$. Thus, if $a_1$ and $ a_1'$ are different, then $a_1$ and $a_1'$ are $(+1)$-inseparable in $\Gale\cM$ and hence, by Lemma~\ref{lem:insep}, $(-1)$-inseparable in~$\cM$.

But balanced matroids of rank $r\geq 2$ and even corank only have $(+1)$-inseparable pairs (see Lemma~\ref{lem:balonlycovar}), which proves that $a_1=a_1'$. Analogously, if $a_i$ and $a_i'$ are the first distinct elements and $i<r$, we can apply the previous argument on the contraction by $\{a_1,\dots,a_{i-1}\}$.

Hence, there are at least $\frac{n!}{(n-r+1)!}$ different choices for the first $r-1$ elements (which give rise to different matroids). For the last element, observe that $\cM/\{a_1,\dots,a_{r-1}\}$ is a matroid of rank~$1$, and that there are exactly two possible different extensions for a matroid of rank~$1$.
\end{proof}

\begin{remark}\label{rmk:lble2}
 In the bound \eqref{eq:bndle1}, we lose a factor of up to $2^{r-1}$ from the real number. This factor is asymptotically much smaller than our bound of $\frac{2n!}{(n-r+1)!}$. 

 In fact, it is not difficult to prove that $\lle{n}{r}\geq2^{r-1}\frac{n!}{(n-1)(n-r)!}$ by giving some cyclic order to the elements of $\cM$ and counting only the lexicographic extensions $[a_1^{\ep_1},\dots, a_r^{\ep_r}]$ that fulfill
\begin{enumerate}[label={(\roman*)}, leftmargin=*]
 \item For $1<i<r$, $a_i^{\ep_i}$ is not $b^{-\alpha \ep_{i-1}}$ if $b<a_{i-1}$ and $b$ and $a_{i-1}$ are $\alpha$-inseparable in $\cM/\{a_1,\dots,a_{i-2}\}$.
 \item For $1<i<r$, $a_i^{\ep_i}$ is not $c^{\alpha\beta \ep_{i-1}}$ when there exists $b$ with $c<b<a_{i-1}$ such that $b$ and $a_{i-1}$ are $\alpha$-inseparable in $\cM/\{a_1,\dots,a_{i-2}\}$ and $c$ is $\beta$-inseparable from $b$ in $\cM/\{a_1,\dots,a_{i-1}\}$.
 \item $a_r$ and $a_{r-1}$ are $\alpha$-inseparable in $\cM/\{a_1,\dots,a_{r-2}\}$, $a_{r-1}>a_r$ and $\ep_r=\alpha \ep_{r-1}$.
\end{enumerate}
 But then the formulas become more complicated and add nothing substantial to the result.
\end{remark}
\begin{remark}
 The hypothesis of balancedness in not necessary in Proposition~\ref{prop:lble} and Remark~\ref{rmk:lble2}, and one can adapt the proofs to obtain lower bounds for the number of lexicographic extensions that any oriented matroid must have.
\end{remark}

\subsection{Many neighborly polytopes in $\cG$}

Once we have bounds for $\lle{n}{r}$, we can obtain bounds for $\lnei{n}{d}$ using the Gale Sewing Construction. But first we do a case where we know the exact number.

\begin{lemma}\label{lem:nlpolygons}
 The number of labeled balanced matroids of rank $r$ with $r+3$ elements is $\frac12{(r+2)!}$.
\end{lemma}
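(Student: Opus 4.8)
The plan is to compute the number of labeled balanced matroids of rank $r$ with $r+3$ elements directly, using duality to pass to the corank-$3$ side. By the correspondence between neighborliness and balancedness (Sturmfels), a labeled balanced matroid $\cM$ of rank $r$ on $n=r+3$ elements is the same as a labeled neighborly matroid $\cM^*$ of rank $3$ on $r+3$ elements. But neighborly matroids of rank $3$ are exactly polygons: a rank-$3$ acyclic oriented matroid in which every element is a vertex, i.e.\ the oriented matroid of a convex $(r+3)$-gon in the plane. So the count reduces to counting labeled convex polygons on $r+3$ vertices, equivalently the number of cyclic orderings of a labeled $(r+3)$-element set up to rotation and reflection (the combinatorial type of a convex polygon is determined by the cyclic sequence of its vertices, read up to the dihedral symmetry).

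First I would make the reduction precise: invoke the duality (Proposition~3.2 of Sturmfels, quoted in the excerpt) to identify balanced matroids of rank $r$ with $r+3$ elements and neighborly matroids of corank $3$ with the same ground set, and then recall that every neighborly oriented matroid of rank $3$ on $m\ge 3$ elements is (the oriented matroid of) a convex polygon --- this is mentioned in the excerpt (e.g.\ in the proof of Proposition~\ref{prop:uniqueflags}, ``neighborly matroids of rank $3$ are polygons''). Second, I would count labeled convex $m$-gons for $m=r+3$: a labeled combinatorial type is a cyclic order on $\{1,\dots,m\}$ modulo the dihedral group of order $2m$, so the number is $\frac{(m-1)!}{2} = \frac{(r+2)!}{2}$ for $m\ge 3$. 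One must check that distinct dihedral classes give non-isomorphic (as labeled oriented matroids) polygons and that there is no further identification; this is standard since the oriented matroid of a convex polygon records exactly the cyclic order of vertices (all triples of consecutive-or-not vertices have orientation determined by position on the circle).

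The only genuine subtlety is the edge case handling: for $r=1$ we have $m=4$ and the formula gives $\frac{3!}{2}=3$; for small $r$ one should make sure the dihedral-group argument is uniform and that ``polygon'' includes the relevant low cases correctly (for $m=3$, $\frac{2!}{2}=1$, the triangle, matching the unique rank-$3$ neighborly matroid on $3$ elements, i.e.\ the simplex $\stc{2}$). Since the lemma is stated for general $r$, I would just note that the dihedral count $\frac{(m-1)!}{2}$ is valid for all $m\ge 3$ and substitute $m=r+3$.

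I do not expect a serious obstacle here: the main point is simply to recognize that rank-$3$ neighborly oriented matroids are precisely labeled convex polygons and that these are counted by cyclic orders modulo the dihedral group. The slightly delicate part is confirming that the oriented-matroid isomorphism relation on convex polygons coincides exactly with the dihedral equivalence of cyclic orders --- i.e.\ that no two cyclically-inequivalent labelings give isomorphic oriented matroids and that reflection does produce an isomorphic one --- but this is a well-known and elementary fact about the ``alternating oriented matroid'' structure of points in convex position.
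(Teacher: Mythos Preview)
Your proposal is correct and is exactly the paper's argument: reduce via duality to neighborly rank-$3$ matroids, identify these with labeled convex $(r+3)$-gons, and count cyclic orders modulo the dihedral group to get $\tfrac{(r+2)!}{2}$. The paper states this in one line; your version simply fills in the details. (One minor slip: the triangle is the simplex $\Delta_2$, not $\stc{2}$, which in this paper denotes the totally cyclic configuration.)
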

\begin{proof}
 Balanced matroids of rank $r$ with $r+3$ elements are dual to polygons with $r+3$ vertices in $\RR^2$. 
There are clearly $\frac12{(r+2)!}$ different combinatorial types of labeled polygons with $r+3$ vertices.
\end{proof}

For our next proof, we need the following result concerning the \emph{inseparability graph} $\IG(\cM)$ of an oriented matroid~$\cM$, which is defined to be the graph that has the elements of $\cM$ as vertices and the pairs of inseparable elements as edges.
\begin{theorem}[{\cite[Theorem 1.1]{CordovilDuchet1990}}]\label{thm:IG}
 Let $\cM$ be a rank $r$ uniform oriented matroid with $n$ elements.
\begin{itemize}
 \item If $r\leq 1$ or $r\geq n-1$, then $\IG ( \cM )$ is the complete graph $K_n$.
 \item If $r=2$ or $r=n-2$, then $\IG(\cM)$ is an $n$-cycle.
 \item If $2<r<n-2$, then $\IG(\cM)$ is either a $n$-cycle, or a disjoint union of chains.
\end{itemize}
\end{theorem}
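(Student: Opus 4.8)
The plan is to establish the three alternatives one at a time, each time reducing to the structure of uniform oriented matroids of rank~$2$ and, dually, of corank~$2$. For the boundary cases: if $r\le 1$ then no circuit of $\cM$ has more than two elements, and after reorienting so that every two‑element circuit carries signs $(+,-)$, every pair of elements is contravariant, so $IG(\cM)$ is the complete graph (reorienting one element only swaps the covariant and contravariant pairs through it, leaving $IG$ unchanged, so this settles every uniform rank‑$1$ matroid). If $r=2$, then up to reorientation and relabeling there is a unique uniform oriented matroid of rank~$2$ on $n$ elements --- the alternating one --- and a direct computation with its Radon circuits (in any three points the middle one carries the sign opposite to the outer two) shows that the inseparable pairs are exactly the cyclically consecutive pairs, so $IG(\cM)=C_n$. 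The cases $r\ge n-1$ and $r=n-2$ follow by duality, since a pair is covariant in $\cM$ precisely when it is contravariant in $\cM^*$, whence $IG(\cM)=IG(\cM^*)$.

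For the interior case $2<r<n-2$ I would first record an inheritance lemma for uniform oriented matroids: if $p,q$ are $\alpha$-inseparable in $\cM$ and $p,q\notin S\cup T$, then $p,q$ are $\alpha$-inseparable in $\cM\setminus S/T$. For deletion this is immediate, the circuits of $\cM\setminus S$ through $p,q$ being a subset of those of $\cM$; for contraction of one element $g$, uniformity forces every circuit of $\cM/g$ through $p,q$ to be the contraction of a circuit of $\cM$ through $p,q,g$, with the signs on $p,q$ inherited, and one iterates and dualizes. With this, $IG(\cM)$ has maximum degree at most~$2$: were $p$ inseparable from three distinct elements $a_1,a_2,a_3$, delete a set $S$ of $n-r-2$ elements avoiding $\{p,a_1,a_2,a_3\}$; by the lemma $p$ stays inseparable from all three in $\cM\setminus S$, a uniform matroid of rank $r$ and corank $2$, where $IG(\cM\setminus S)=C_{r+2}$ by the boundary cases and $\deg p\le 2$, a contradiction. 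Thus $IG(\cM)$ is a disjoint union of paths and cycles, and it remains only to show it contains no cycle of length $<n$: with the degree bound, this forces $IG(\cM)$ to be a single $n$-cycle if it contains any cycle, and a disjoint union of paths otherwise.

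So let $C\subseteq IG(\cM)$ be a cycle on $\ell<n$ vertices; being a cycle, it is a whole component. Pick $f\notin V(C)$, set $W=V(C)\cup\{f\}$, and pass to the uniform minor on the set $W$ obtained by deleting the elements outside $W$ when $\ell\ge r+1$ (the minor then has rank $r$, corank $\ell+1-r\ge 2$), by contracting them when $\ell\ge n-r+1$ (rank $\ell+1-(n-r)\ge 2$, corank $n-r\ge 3$), and, in the remaining range $\ell\le\min(r,n-r)$, by deleting enough elements to reach corank~$2$ while keeping $V(C)$, so that the resulting $C_{r+2}$ has girth $r+2>\ell$; these cases cover all $\ell$. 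By the inheritance lemma the edges of $C$ survive in the minor, which is either of corank~$2$ --- then its inseparability graph is a cycle strictly longer than $\ell$ and so contains no $\ell$-cycle, a contradiction --- or again an interior matroid on fewer than $n$ elements --- then by induction on $n$ its inseparability graph is a cycle or a union of paths, yet contains the $\ell$-cycle $C$ with $\ell$ less than its vertex count, the same contradiction. (The induction bottoms out at $n\le 5$, where $2<r<n-2$ is impossible and every minor produced is a boundary case.) The one configuration this does not reach is $\ell=n-1$ --- a Hamiltonian-minus-one cycle plus a single isolated element $f$ --- since then one cannot remove an element while keeping $V(C)$ and an extra vertex, and both $\cM\setminus f$ and $\cM/f$ have inseparability graph $C_{n-1}$, so no contradiction is immediate. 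I expect this to be the main obstacle; closing it seems to require a finer argument, \eg using Theorem~\ref{thm:minorsfixmatroid} to reconstruct $\cM$ from the pair $(\cM\setminus f,\cM/f)$ and showing that an element inseparable from none of the others is incompatible with both minors being $(n-1)$-cycles, or a direct analysis through single-element (lexicographic) extensions.
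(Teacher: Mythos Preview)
The paper does not prove this theorem at all: it is quoted verbatim from Cordovil--Duchet \cite{CordovilDuchet1990} and used as a black box in the proof of Lemma~\ref{lem:lbbm}. There is therefore no ``paper's own proof'' to compare your attempt against.

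As for the attempt itself, the boundary cases and the degree bound are fine, and the inheritance of inseparability under uniform minors is standard. The real issue is the one you already flag: when $\ell=n-1$ your set $W=V(C)\cup\{f\}$ is all of $E$, so passing to a minor on $W$ does nothing and the induction stalls. This is not a technicality that a reconstruction argument via Theorem~\ref{thm:minorsfixmatroid} will easily patch---knowing that $\cM\setminus f$ and $\cM/f$ both have inseparability graph $C_{n-1}$ does not by itself force $f$ to be inseparable from some element of $\cM$, since inseparability in a minor does not lift back to $\cM$. Cordovil and Duchet's original proof handles the cycle case by a different route (characterising when $IG(\cM)$ is an $n$-cycle in terms of $\cM$ being an ``alternating'' matroid, i.e.\ inseparably equivalent to a cyclic polytope), rather than by excluding short cycles one length at a time; if you want a self-contained argument you will likely need a similar structural characterisation rather than a pure minor-and-induction scheme.
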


\begin{lemma}\label{lem:lbbm}
For $r\geq2$ and $m\geq2$, the number of labeled balanced matroids of rank $r$ with $r+1+2m$ elements is  $\lnei{2m+r+1}{2m}$ and fulfills
\begin{equation}\label{eq:lbbm}
\lnei{2m+r+1}{2m}\geq \lnei{2m+r-1}{2m-2}\frac{r+2m}{2}\lle{r+2m-1}{r}.
\end{equation}
\end{lemma}
\begin{proof}
 The characterization is direct by duality. For the bound, choose a balanced matroid~$\cM$ of rank $r$ with $r+1+2(m-1)$ elements such that each element has a label in the set $\{1,\dots,r+1+2(m-1)\}$. And let $\cM[p]$ be a labeled lexicographic extension of $\cM$ by $p=[a_1^{\ep_1},\dots,a_r^{\ep_r}]$. Finally let $\cM[p][q]$ be the extension of $\cM[p]$ by $q=[p^-,a_1^{-},\dots,a_{r-1}^{-}]$, which is balanced by the Double Extension Theorem~\ref{thm:thethm}. 
 
 We consider all the relabelings of $\cM[p][q]$ such that $q$ gets label $r+2m+1$ and the labeling of $\cM[p][q]$ on~$\cM$ preserves the relative order of the original labeling of~$\cM$. 
 
 We claim that each labeled matroid obtained this way is constructed at most twice. Indeed, observe that $p$ and $q$ are inseparable because of Lemma~\ref{lem:leinseparable}. Moreover, by Theorem~\ref{thm:IG}, $q$ is inseparable from at most two elements in $\cM[p][q]$ because $2\le r$ and $2\le m$. Since the label of $q$ is fixed, $\cM[p][q]$ might have been counted twice if $q$ is inseparable from two elements in $\cM[p][q]$. 
 
 Summing up, we can choose among $\lnei{2m+r-1}{2m-2}$ matroids $\cM$, $\lle{r+2m-1}{r}$ extensions $\cM[p]$, and $(r+2m)$ labels for $p$ to construct at least \[(r+2m)\lnei{2m+r-1}{2m-2}\lle{r+2m-1}{r}\] labeled balanced oriented matroids, where each matroid is counted at most twice. This yields the claimed formula. 
\end{proof}

This result allows us to give our first explicit lower bound on the number of neighborly polytopes. 
\begin{proposition}\label{prop:brutebound}
 The number of labeled neighborly polytopes in even dimension $d=2m\geq 2$ with $n=r+d+1$ vertices fulfills 
\begin{equation}\label{eq:lblnei1}
  \lnei{2m+r+1}{2m}\geq\prod_{i=1}^{m} {\frac{(r+2i)!}{(2i)!}}
\end{equation}
\end{proposition}
\begin{proof}
Observe that by rigidity (Theorem~\ref{thm:neigharerigid}), counting labeled neighborly polytopes is equivalent to counting labeled neighborly oriented matroids. By duality, this is in turn equivalent to counting balanced oriented matroids. This we do.

Lemma~\ref{lem:nlpolygons} proves the required formula in the initial case $m=1$, and yields $\lnei{2+\rr+1}{2}=\frac12(\rr+2)!$. For $m\ge2$, we observe that by Proposition~\ref{prop:lble},
\[\frac{r+2m}{2}\lle{r+2m-1}{r}\geq 
\frac{(\rr+2m)!}{(2m)!}.\]
Finally, we apply Lemma~\ref{lem:lbbm} to obtain~\eqref{eq:lblnei1}.
\end{proof}

Although Proposition~\ref{prop:brutebound} provides us with the desired bound, it is hard to understand its order of magnitude at first sight. This is the reason why we present the following simplified bound~\eqref{eq:thebound}.
\begin{theorem}\label{thm:lblnei}
The number of labeled neighborly polytopes in even dimension $d$ with $n$ vertices fulfills 
\begin{equation}\tag{\ref{eq:thebound}}
\lnei{r+d+1}{d}\geq \frac{\left( r+d \right) ^{\left( \frac{r}{2}+\frac{d}{2} \right) ^{2}}}{{r}^{{(\frac{r}{2})}^{2}}{d}^{{(\frac{d}{2})}^{2}}{\e^{3\frac{r}{2}\frac{d}{2}}}},
\end{equation}
that is, 
\begin{equation*}
 \lnei{n}{d}\geq \frac{\left(n-1\right) ^{\left( \frac{n-1}{2}\right) ^{2}}}{{(n-d-1)}^{{\left(\frac{n-d-1}{2}\right)}^{2}} d  ^{{(\frac{d}{2})}^{2}}{\e^{\frac{3d(n-d-1)}{4}}}}.
\end{equation*}
\end{theorem}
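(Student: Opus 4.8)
The two displayed inequalities are the same statement: substituting $r=n-d-1$ turns the first into the second (then $r+d=n-1$ and $3\tfrac r2\tfrac d2=\tfrac{3(n-d-1)d}{4}$), so it suffices to prove the first one. Write $d=2m$ (the case produced by Gale sewing, to which \eqref{eq:lblnei1} applies). All that is left is to extract the bound from
\[
\lnei{r+d+1}{d}\ \geq\ \prod_{i=1}^{m}\frac{(r+2i)!}{(2i)!}\qquad(\text{this is }\eqref{eq:lblnei1}),
\]
which is a computation with logarithms.

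\textbf{Reduction to a sum.} Take logarithms. The $i$-th factor is $\log\frac{(r+2i)!}{(2i)!}=\sum_{j=1}^{r}\log(2i+j)$, and since $\log$ is increasing, $\sum_{j=1}^{r}\log(2i+j)\geq\int_{2i}^{2i+r}\log t\,dt=(r+2i)\log(r+2i)-(2i)\log(2i)-r$. Hence, with $\Sigma:=\sum_{i=1}^{m}\bigl[(r+2i)\log(r+2i)-(2i)\log(2i)\bigr]$,
\[
\log\prod_{i=1}^{m}\frac{(r+2i)!}{(2i)!}\ \geq\ \Sigma-\tfrac{rd}{2}.
\]

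\textbf{An overlap-integral estimate for $\Sigma$.} Using the antiderivative $t\log t=\int(\log t+1)\,dt$, each summand equals $\int_{2i}^{2i+r}(\log t+1)\,dt$, so $\Sigma=\int_{0}^{\infty}(\log t+1)\,\mu(t)\,dt$, where $\mu(t):=\#\{\,1\le i\le m:\ 2i\le t\le 2i+r\,\}$ counts how many of the intervals $[2i,2i+r]$ cover $t$. I would compare $\mu$ to its ``continuous'' analogue $\nu(t):=\max\bigl(0,\tfrac12(\min(d,t)-\max(0,t-r))\bigr)$, the piecewise‑linear unimodal profile that ramps up with slope $\tfrac12$ on $[0,\min(r,d)]$, is flat on $[\min(r,d),\max(r,d)]$, and ramps down on $[\max(r,d),r+d]$. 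Two facts are needed. First, $\mu$ arises from $\nu$ only by an integer rounding that pushes mass to the right: one checks that $G(t):=\int_{0}^{t}(\mu-\nu)\,ds\le 0$ for all $t$ (on the up‑ramp $\mu$ is a floor of $\nu$, so $G$ becomes negative; on the bulk $\mu-\nu$ averages $0$ with amplitude below $1$, so $G$ stays negative; on the down‑ramp $\mu-\nu\ge 0$, so $G$ increases back to $0$, using $\int_{0}^{\infty}\mu=mr=\tfrac{rd}{2}=\int_{0}^{\infty}\nu$). Integration by parts then gives $\int_{0}^{\infty}(\log t+1)(\mu-\nu)\,dt=-\int_{0}^{\infty}G(t)/t\,dt\ge 0$, i.e.\ $\Sigma\ge\int_{0}^{\infty}(\log t+1)\nu(t)\,dt$. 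Second, integrating over the three linear pieces of $\nu$ yields the closed form
\[
\int_{0}^{\infty}(\log t+1)\,\nu(t)\,dt\ =\ \frac{(r+d)^{2}}{4}\log(r+d)-\frac{r^{2}}{4}\log r-\frac{d^{2}}{4}\log d-\frac{rd}{4},
\]
which is exactly $\tfrac{rd}{2}$ larger than the logarithm of the claimed right‑hand side $\bigl(r+d\bigr)^{(r/2+d/2)^{2}}r^{-(r/2)^{2}}d^{-(d/2)^{2}}\mathrm e^{-3(r/2)(d/2)}$.

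\textbf{Conclusion and obstacle.} Combining the three displays, $\log\prod_{i=1}^{m}\frac{(r+2i)!}{(2i)!}\ge\Sigma-\tfrac{rd}{2}\ge\int_{0}^{\infty}(\log t+1)\nu-\tfrac{rd}{2}=\log(\text{RHS})$, which is the theorem. I expect the only delicate point to be the sign claim $G\le 0$: this requires tracking the parity of $r$ and the boundary behaviour of $\mu-\nu$ at $t=0$, $t=\min(r,d)$ and $t=\max(r,d)$, the rest being the mechanical evaluation of $\int(\log t+1)\nu$ recorded above. (For odd $d$ the same scheme applies to $\prod_{i=1}^{\lfloor d/2\rfloor}$ of the corresponding factors, or one may quote the even case together with the monotonicity of $\lnei{\cdot}{\cdot}$.)
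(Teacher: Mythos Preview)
Your argument for even $d$ is correct, but it is considerably more elaborate than what the paper does. The paper bounds the double sum in one step,
\[
\sum_{i=1}^{m}\sum_{j=1}^{r}\ln(2i+j)\ \ge\ \int_{0}^{m}\!\!\int_{0}^{r}\ln(2i+j)\,dj\,di,
\]
using only that $\ln(2i+j)$ is increasing in each variable, and then evaluates the double integral directly to obtain the stated closed form. Your route---first replacing the $j$-sum by an integral, then recasting $\Sigma$ as $\int(\log t+1)\mu(t)\,dt$, and finally comparing $\mu$ to the trapezoid $\nu$ via integration by parts---arrives at exactly the same integral (indeed $\int(\log t+1)\nu\,dt=\int_{0}^{m}\!\int_{0}^{r}\ln(2i+j)\,dj\,di+\tfrac{rd}{2}$), but through an unnecessary detour. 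Incidentally, the ``delicate'' inequality $G\le 0$ you flag is not delicate at all: for fixed $t$, the function $h(s)=\bigl|[2s,2s+r]\cap[0,t]\bigr|$ is nonincreasing in $s$, so $\sum_{i=1}^{m}h(i)\le\int_{0}^{m}h(s)\,ds$, which is precisely $\int_{0}^{t}\mu\le\int_{0}^{t}\nu$.

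The genuine gap is your treatment of odd $d$. Neither of your two suggestions works as stated. Inequality \eqref{eq:lblnei1} is only available for $d=2m$, so ``the same scheme applied to $\prod_{i=1}^{\lfloor d/2\rfloor}$'' has no starting point. And the monotonicity $\lnei{n}{d}\ge\lnei{n-1}{d-1}$ (say via pyramids) only yields the even bound with parameters $(r,d-1)$, which is not obviously at least the claimed bound with parameters $(r,d)$. The paper handles odd $d=2m+1$ by invoking one further lexicographic extension (any such extension of a balanced matroid stays balanced), obtaining
\[
\ln\lnei{r+2m+2}{2m+1}\ \ge\ \int_{0}^{m}\!\!\int_{0}^{r}\ln(2i+j)\,dj\,di\ +\ \int_{0}^{r}\ln(2m+2+j)\,dj,
\]
and then observing that the extra term dominates $\int_{m}^{m+1/2}\!\int_{0}^{r}\ln(2i+j)\,dj\,di$, so the double integral extends to $i\in[0,d/2]$ and the same closed form follows. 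You should supply this (or an equivalent) argument rather than the parenthetical.
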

\begin{proof}
We start from Equation~\eqref{eq:lblnei1}, and bound the natural logarithm of $\lnei{r+1+2m}{2m}$. Using the fact that $\int_{a-1}^{b} f(s)\  \mathrm{d}s \le \sum_{i=a}^{b} f(i)$ for any increasing function $f$, we obtain
\begin{align*}
\ln\left( \lnei{r+1+2m}{2m}\right)\geq& \ln\left(\prod_{i=1}^{m} \frac{(\rr+2i)!}{(2i)!}
\right)=\sum_{i=1}^{m} {\sum_{j=1}^{r}{ \ln\left(2i+j\right)}}\\
\geq&\int_{i=0}^{m} {\int_{j=0}^{r}{ \ln\left(2i+j\right) \mathrm{d}j} \mathrm{d}i}\\
=&  \frac{\left( 2m+r \right) ^{2}\ln  \left( 2m+r
 \right)-{r}^{2}\ln  \left( r \right)}{4} -{m}^{2} \ln  \left( 2m
 \right) -\frac{3mr}{2}.
\end{align*}
Hence
\begin{align*}
\lnei{r+1+2m}{2m}\geq&\frac{\left( 2m+r
 \right) ^{\frac14 \left( 2m+r \right) ^{2}}}{{r}^{\frac14{r}^{2}}\left( 2m \right) ^{{m}^{2}}{\e^{\frac32mr}}},
\end{align*}
and we conclude that
\begin{equation*}
\lnei{r+d+1}{d}\geq \frac{\left( r+d \right) ^{\left( \frac{r}{2}+\frac{d}{2} \right) ^{2}}}{{r}^{{(\frac{r}{2})}^{2}}{d}^{{(\frac{d}{2})}^{2}}{\e^{3\frac{r}{2}\frac{d}{2}}}}.
\end{equation*}
\end{proof}

The following corollary is a further simplification of the bound. It has the form~$n^{\frac{dn}{2}(1+o(1))}$ when $d$ is fixed and $n\rightarrow \infty$. 
\begin{corollary}
The number of labeled neighborly polytopes in even dimension $d$ with $n$ vertices fulfills 
\[\lnei{n}{d}\geq  \left( \frac{n-1}{\e^{3/2}}\right)^{\frac12(n-d-1)d}.\]
\end{corollary}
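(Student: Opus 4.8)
I would derive the corollary directly from Theorem~\ref{thm:lblnei} by a simple estimation of the explicit formula
\[
\lnei{r+d+1}{d}\geq \frac{\left( r+d \right) ^{\left( \frac{r}{2}+\frac{d}{2} \right) ^{2}}}{{r}^{{(\frac{r}{2})}^{2}}{d}^{{(\frac{d}{2})}^{2}}{{\rm e}^{3\frac{r}{2}\frac{d}{2}}}}
\]
under the hypothesis $n>2d$, i.e.\ $r=n-d-1>d-1$. Writing $n-1=r+d$, the exponent in the numerator is $\left(\frac{n-1}{2}\right)^2$, so the claim $\lnei{n}{d}\geq\left(\frac{n-1}{\sqrt{{\rm e}^3}}\right)^{d(n-1)/2}$ is equivalent, after taking logarithms, to
\[
\left(\tfrac{n-1}{2}\right)^{2}\ln(n-1)-\left(\tfrac{r}{2}\right)^{2}\ln r-\left(\tfrac{d}{2}\right)^{2}\ln d-\tfrac{3rd}{4}\ \geq\ \tfrac{d(n-1)}{2}\bigl(\ln(n-1)-\tfrac32\bigr).
\]
So the whole corollary reduces to one inequality about logarithms.

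\textbf{Key steps.} First I would cancel the ${\rm e}^{-3rd/4}$ factor against the $-\tfrac32\cdot\tfrac{d(n-1)}{2}=-\tfrac{3d(n-1)}{4}$ term on the right; since $r=n-1-d\le n-1$, we have $\tfrac{3rd}{4}\le\tfrac{3d(n-1)}{4}$, so these exponential contributions are already in our favour and can be dropped. What remains is to show
\[
\left(\tfrac{n-1}{2}\right)^{2}\ln(n-1)-\tfrac{d(n-1)}{2}\ln(n-1)\ \geq\ \left(\tfrac{r}{2}\right)^{2}\ln r+\left(\tfrac{d}{2}\right)^{2}\ln d.
\]
The left-hand side equals $\tfrac{n-1}{2}\left(\tfrac{n-1}{2}-d\right)\ln(n-1)=\tfrac{n-1}{2}\cdot\tfrac{r-d+1}{2}\cdot\ln(n-1)$. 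Since $n>2d$ forces $r\ge d$, one has $r-d+1\ge 1$, and more usefully $\tfrac{r-d+1}{2}\cdot\tfrac{n-1}{2}\ge\tfrac14\bigl(r^2+d^2\bigr)$ when $r\ge d$ (this is a one-line algebraic check: $(r-d+1)(r+d)\ge r^2+d^2$ iff $r\ge d^2-rd+d$... I would verify the sharp form or, more safely, use the cruder bound below). Then, bounding $\ln(n-1)=\ln(r+d)\ge\ln r$ and $\ge\ln d$, the left side dominates $\bigl(\tfrac{r}{2}\bigr)^2\ln r+\bigl(\tfrac{d}{2}\bigr)^2\ln d$ term by term. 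Assembling these estimates gives the corollary.

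\textbf{Main obstacle.} The only delicate point is the purely arithmetic inequality $\tfrac{n-1}{2}\cdot\tfrac{r-d+1}{2}\ge\tfrac14(r^2+d^2)$ under $r\ge d$; it is true but a little lossy near $r=d$, so I would instead split the needed quantity $\bigl(\tfrac{n-1}{2}\bigr)^2-\tfrac{d(n-1)}{2}$ as a sum of $\bigl(\tfrac r2\bigr)^2$ and $\bigl(\tfrac d2\bigr)^2$ plus a nonnegative remainder — checking that $\bigl(\tfrac{r+d}{2}\bigr)^2-\tfrac{d(r+d)}{2}-\bigl(\tfrac r2\bigr)^2-\bigl(\tfrac d2\bigr)^2=\tfrac{(r+d)(r-d)}{4}-\tfrac{rd}{2}-\tfrac{d^2}{4}+\tfrac{d^2}{2}$ simplifies to something $\ge 0$ for $r\ge d$ — and then the logarithmic comparison is immediate. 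No conceptual difficulty arises; this is routine once the bookkeeping is set up, and I would present it as a short computation following Theorem~\ref{thm:lblnei}.
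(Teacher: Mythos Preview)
Your reduction to a single logarithmic inequality is the natural approach, but the algebra that follows contains errors that make the argument collapse.

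First, a slip: with $n-1=r+d$ one has $\tfrac{n-1}{2}-d=\tfrac{r-d}{2}$, not $\tfrac{r-d+1}{2}$. Hence the left-hand side of your residual inequality is
\[
\left(\tfrac{n-1}{2}\right)^2\ln(n-1)-\tfrac{d(n-1)}{2}\ln(n-1)=\tfrac{(r+d)(r-d)}{4}\ln(n-1)=\tfrac{r^2-d^2}{4}\ln(n-1).
\]
This already kills your proposed ``splitting'': a direct computation gives
\[
\left(\tfrac{r+d}{2}\right)^{2}-\tfrac{d(r+d)}{2}-\left(\tfrac r2\right)^{2}-\left(\tfrac d2\right)^{2}=-\tfrac{d^2}{2},
\]
so the remainder is strictly negative for every $d\ge 1$, not $\ge 0$ as you assert. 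Likewise your earlier claim $\tfrac{(r-d)(r+d)}{4}\ge\tfrac{r^2+d^2}{4}$ is equivalent to $-d^2\ge d^2$ and is simply false.

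Second, the decision to ``drop'' the exponential contribution is fatal near the boundary. The amount you discard is $\tfrac{3d(n-1)}{4}-\tfrac{3rd}{4}=\tfrac{3d^2}{4}$, and this is exactly what is needed to compensate the deficit $-\tfrac{d^2}{2}$ above. Without it, at $r=d$ you would be asking for
\[
0=\tfrac{r^2-d^2}{4}\ln(n-1)\ \geq\ \tfrac{r^2}{4}\ln r+\tfrac{d^2}{4}\ln d=\tfrac{d^2}{2}\ln d,
\]
which fails for every $d\ge 2$. Even retaining the $\tfrac{3d^2}{4}$, the inequality at $r=d$ reduces to $\tfrac{3d^2}{4}\ge\tfrac{d^2}{2}\ln d$, i.e.\ $d\le e^{3/2}$; so for $d\ge 5$ and $n=2d+1$ the displayed bound of Theorem~\ref{thm:lblnei} is already weaker than the corollary's right-hand side, and no amount of rearrangement will close the gap. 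A correct derivation has to either allow a slightly larger cushion between $n$ and $2d$, or go back to the sharper product formula~\eqref{eq:lblnei1} rather than its integral relaxation.
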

\begin{proof}
Since ${r}^{{(\frac{r}{2})}^{2}}{d}^{{(\frac{d}{2})}^{2}}\leq{(r+d)}^{{(\frac{r}{2})}^{2}+{(\frac{d}{2})}^{2}}$ we obtain
\[
\lnei{r+d+1}{d}\geq \frac{\left( r+d \right) ^{\left( \frac{r}{2}+\frac{d}{2} \right) ^{2}}}{{r}^{{(\frac{r}{2})}^{2}}{d}^{{(\frac{d}{2})}^{2}}{\e^{3\frac{r}{2}\frac{d}{2}}}}\geq  \frac{\left( r+d \right) ^{\frac{rd}{2}}}{{\e^{\frac{3{rd}}{4}}}}.\qedhere
\]
\end{proof}

Observe that this bound is not only useful for neighborly polytopes whose number of vertices is very large with respect to the dimension, but also for neighborly polytopes with fixed corank and large dimension.

A final observation is that we can translate these bounds for even dimensional neighborly polytopes to bounds for neighborly polytopes in odd dimension just by taking pyramids, because a pyramid over an even dimensional neighborly polytope is always neighborly. (If simpliciality was needed, any extension in general position of the dual of an even-dimensional neighborly polytope would work too.)

\begin{corollary}\belowdisplayskip=-12pt \label{cor:oddneighpoly}
The number of labeled neighborly polytopes in odd dimension $d$ with $n$ vertices fulfills 
\begin{align*}
\lnei{n}{d}\geq\lnei{n\!-\!1}{d\!-\!1}&\geq
\frac{\left(n-2\right) ^{\left( \frac{n-2}{2}\right) ^{2}}}{{(n-d-1)}^{{\left(\frac{n-d-1}{2}\right)}^{2}} {(d-1)}^{{(\frac{d-1}{2})}^{2}}{\e^{\frac{3(d-1)(n-d-1)}{4}}}}\\
&\geq  \left( \frac{n-2}{\e^{3/2}}\right)^{\frac12(n-d-1)(d-1)}.
\end{align*}
\end{corollary}

\subsection{Many non-realizable neighborly matroids}

Exactly the same reasoning that leads to the bounds in Theorem~\ref{thm:lblnei} can be applied to give lower bounds for non-realizable neighborly matroids. From now on, let 
$\lnr{n}{r}$ represent the number of labeled non-realizable neighborly oriented matroids of rank $r$ with $n$ elements.

\begin{theorem}\label{thm:nonrealizablebound}
 The number of labeled non-realizable neighborly oriented matroids of odd rank~$\rd$ with $n$ elements is at least
\begin{equation*}
\lnr{n}{s}\geq \frac{\left( n-1 \right) ^{\frac12{(\rd-5)(n-\rd)}}}{\left(\frac{n-\rd+4}{2}\right) ^{4}\e^{\frac34(\rd-5)(n-\rd)}}.
\end{equation*}
\end{theorem}
\begin{proof}[Proof sketch]
The principal observation is that an analogue of the inequality~\eqref{eq:lbbm} of Lemma~\ref{lem:lbbm} applies. That is, if $r\geq2$, $m\geq2$ and $n=2m+r+1$, then
\begin{equation*}
\lnr{n}{2m+1}\geq \lnr{n-2}{2m-1}\frac{n-1}{2}\lle{n-2}{r}.
\end{equation*}
This uses the Double Extension Theorem~\ref{thm:thethm} and the fact that all the lexicographic extensions of a non-realizable matroid are non-realizable.

Moreover, by Theorem~\ref{thm:nonrealizable}, $\lnr{r+5}{5}\geq 1$ for all $r\geq5$.
Which means that for $m\geq3$ we can mimic the proof of Theorem~\ref{thm:lblnei} to get 
\begin{align*}
\lnr{r+1+2m}{2m+1}\geq& \prod_{i=3}^{m} {\prod_{j=1}^{r}{ \left(2i+j\right)}}\\\geq&\exp\left(\int_{i=2}^{m} {\int_{j=0}^{r}{ \ln\left(2i+j\right) \mathrm{d}j} \mathrm{d}i}\right)\\
=&\frac{2^8\left( 2m+r \right) ^{\frac{\left( 2m+r \right) ^
{2}}{4}}}{ \left( r+4 \right) ^{\frac{\left( r+4 \right) ^{2}}{4}} {(2m)}^{{m}^{2
}}{\e^{\frac{3\left( m-2 \right)r}{2} }}}
\geq\frac{\left( 2m+r \right) ^{(m-2)r}}{\left(\frac{r+4}{2}\right) ^{4}\e^{\frac{3 \left( m-2 \right)r}{2} }}.\qedhere
\end{align*}
{}
\end{proof}

\medskip
\section*{Acknowledgements}

I would like to thank Uli Wagner for stimulating discussions that originated this research and Guillem Perarnau and Juanjo Ru\'e for fruitful conversations. Moreover, I am indebted to Julian Pfeifle for his advice, comments and corrections that greatly helped to improve the presentation. I also want to express my gratitude to an anonymous referee who provided a lot of useful observations and suggestions. 

\appendix

\section{Appendix}\label{sec:appendix}
This appendix contains the proofs of Propositions~\ref{prop:allquotientsofle} and~\ref{prop:allquotientsofGalesewnareGalesewn}, as well as some intermediate lemmas.

For the proof of Proposition~\ref{prop:allquotientsofle} we need a pair of results. 
The first one concerns inseparable elements, and shows the relation between circuits/cocircuits through~$x$ and circuits/cocircuits through~$y$ when $x$ and $y$ are inseparable. 

\begin{lemma}\label{lem:circinseparable}
 Let $\cM$ be a uniform oriented matroid with two $\alpha$-inseparable elements~$x$ and~$y$.
\begin{enumerate}
 \item For every circuit $X\in\ci(\cM)$ with $X(x)=0$ and $X(y)\neq 0$, there is a circuit $X'\in\ci(\cM)$ with $X'(x)=-\alpha X(y)$, $X'(y)=0$ and $X'(e)=X(e)$ for all $e\notin\{x,y\}$;
 \item For every cocircuit $C\in\co(\cM)$ with $C(x)=0$ and $C(y)\neq 0$, there is a cocircuit $C'\in\co(\cM)$ with $C'(x)=\alpha C(y)$, $C'(y)=0$ and $C'(e)=C(e)$ for all $e\notin\{x,y\}$.
\end{enumerate}

\end{lemma}
\begin{proof}
Both statements are equivalent by duality. We prove the first one.

Let $X'\in \ci(\cM)$ be the circuit with support $\underline X'=\underline X \setminus y \cup x$ and such that $X'({x})=-\alpha X(y)$. This circuit exists because $\cM$ is uniform. We will see that $X'({e})= X(e)$ for all $e\in \underline X'\setminus x$.
Let $e\in \underline{X}\setminus y$, and let $C$ be the cocircuit of~$\cM$ with $\underline C=E\setminus(\underline{X}\setminus y\setminus e)$. That makes $\underline C\cap \underline X=\{e,y\}$. Since $y$ and $x$ are $\alpha$-inseparable, $C(x)=-\alpha C(y)$, and by circuit-cocircuit orthogonality,
\[X(y)X(e)=-C(y)C(e)=\alpha C({x})C(e).\]
But $\underline C\cap \underline X'=\{e,x\}$, and hence, again by orthogonality, \(X'(x)X'(e)=-C({x})C(e).\)
The conclusion now follows from $X'({x})=-\alpha X(y)$.
\end{proof}

In this lemma, the hypothesis of uniformity is important, since the result does not hold in general.

The second lemma concerns the simultaneous contraction and deletion of $p$ and $a_1$.

\begin{lemma}\label{lem:contractdeletele}
If $\cM$ is uniform and  $p=[a_1^{\ep_1},\dots]$, then
 \[\cM/a_1 = ( \cM[p]\setminus p)/a_1= (\cM[p]\setminus a_1)/p .\]

\end{lemma}
\begin{proof}
The first equality is direct. The second one follows from Lemma~\ref{lem:circinseparable}. Indeed, every cocircuit of $( \cM[p]/a_1)\setminus p$ corresponds to a cocircuit~$C$ of~$\cM[p]$ with $C(a_1)=0$ and $C(p)\neq 0$. By Lemma~\ref{lem:circinseparable}, the
values of $C$ on $e\notin\{a_1,p\}$ coincide with the values of $C'$ on $e\notin\{a_1,p\}$, where $C'$ is a cocircuit of $\cM[p]$ with $C'(a_1)\neq 0$ and $C'(p)=0$. That is, $C'$ corresponds to a cocircuit of $( \cM[p]/p)\setminus a_1$.
\end{proof}

We are now ready to prove Proposition~\ref{prop:allquotientsofle}, and restate it here for the reader's convenience.

\smallskip
\noindent\textbf{Proposition \ref{prop:allquotientsofle}}
\emph{
Let $\cM$ be a uniform oriented matroid of rank $\rr$ on a ground set~$E$, and let 
$\cM[p]$ be the lexicographic extension of $\cM$ by $p=[a_1^{\ep_1},a_2^{\ep_2},\dots,a_\rr^{\ep_\rr}]$.
Then
\begin{align}
\cM[p]/p\ &\stackrel{\varphi}{\simeq}\ (\cM/a_1)[a_2^{-\ep_1\ep_2},\dots,a_\rr^{-\ep_1\ep_\rr}],\tag{\ref{eq:Mmodp}}\\
\cM[p]/a_i\ &=\ (\cM/a_i)[a_1^{\ep_1},\dots,a_{i-1}^{\ep_{i-1}},a_{i+1}^{\ep_{i+1}},\dots,a_\rr^{\ep_\rr}], \text{ and }\tag{\ref{eq:Mmoda}}\\
\cM[p]/e\ &=\ (\cM/e)[a_1^{\ep_1},a_2^{\ep_2},\dots,a_{\rr-1}^{\ep_{\rr-1}}];\tag{\ref{eq:Mmode}}
\end{align}
where $e\in E$ is any element different from $p$ and any $a_i$. The isomorphism $\varphi$ in~\eqref{eq:Mmodp} is $\varphi(e)=e$ for all $e\in E\setminus \{p,a_1\}$ and $\varphi(a_1)=[a_2^{-\ep_1\ep_2},\dots,a_\rr^{-\ep_1\ep_\rr}]$; the latter is the extending element.
}
\begin{proof}
The proof of \eqref{eq:Mmoda} and \eqref{eq:Mmode} is direct 
just by observing the signature of $p$ in~$\cM[p]$.

To prove \eqref{eq:Mmodp}, observe that $(\cM[p]/p)\setminus a_1=\cM/ a_1$ by Lemma~\ref{lem:contractdeletele}. Therefore, we only need to prove that the signature of the extension of $(\cM[p]/p)\setminus a_1$ by $a_1$ coincides with that of the lexicographic extension by $[a_2^{-\ep_1\ep_2},\dots,a_\rr^{-\ep_1\ep_\rr}]$. 
That is, let $C\in \co(\cM[p])$ be a cocircuit of $\cM[p]$ with $C(p)=0$ and $C(a_1)\neq 0$ and let $k>1$ be minimal with $C(a_k)\neq 0$. We want to see that $C(a_1)=-\ep_1\ep_kC(a_k)$. 

Because $a_1$ and $p$ are $(-\ep_1)$-inseparable, Lemma~\ref{lem:circinseparable} yields a cocircuit $C'\in \co(\cM[p])$ with $C'(p)=-\ep_1C(a_1)$ and $C'(a_1)=0$ and such that $k$ is minimal with $C'(a_k)\neq 0$. Moreover $C'(a_k)=C(a_k)$ and by the signature of the lexicographic extension $C'(p)=\ep_kC'(a_k)=\ep_kC(a_k)$. The claim follows from comparing both expressions for $C'(p)$.
\end{proof}

The proof of Proposition~\ref{prop:allquotientsofGalesewnareGalesewn} uses Proposition~\ref{prop:allquotientsofle},  Lemma~\ref{lem:quotientsofGalesewnareGalesewn} and Lemma~\ref{lem:galesewingorder} below to deduce that subpolytopes (convex hull of subsets of vertices) of Gale sewn polytopes are also Gale sewn. 

Lemma~\ref{lem:galesewingorder} shows that when Gale sewing, the roles of $a_1$, $p$ and $q$ can be exchanged.
Indeed, the isomorphism in~\eqref{eq:changepq} implies that we can switch the roles of $p$ and $q$, while the isomorphism in~\eqref{eq:changepa1} shows how $a_1$ can also be considered as one of the sewn elements.  

\begin{lemma}\label{lem:galesewingorder}
Let $\cM$ be a uniform oriented matroid on a ground set $E$, and consider the lexicographic extensions by
\begin{align*}
 p&=[a_1^{\ep_1},\dots,a_r^{\ep_r}],& q&=[p^-,a_1^{-},\dots,a_{r-1}^{-}];\\
 p'&=[a_1^{-\ep_1},\dots,a_r^{-\ep_r}],& q'&=[p'^-,a_1^{-},\dots,a_{r-1}^{-}];\\
 p''&=[a_1^{+},a_2^{-\ep_1\ep_2},\dots,a_r^{-\ep_1\ep_r}],& q''&=[p''^-,a_1^{-},\dots,a_{r-1}^{-}].
\end{align*}
Then
 \begin{align}
\cM[p][q]&\stackrel{\varphi}{\simeq}\cM[p'][q'],\text{ and }\label{eq:changepq}\\  \cM[p][q]&\stackrel{\psi}{\simeq}\cM[p''][q''],  \label{eq:changepa1} 
 \end{align}
where the bijection $\varphi:E\cup\{p,q\}\rightarrow E\cup\{p',q'\}$ is 
\[\varphi(p)=q',\, \varphi(q)=p'\text{ and }\varphi(e)=e\text{ for }e\in E;\]
and $\psi:E\cup\{p,q\}\rightarrow E\cup\{p'',q''\}$ is defined as
\[\psi(p)=\begin{cases}a_1 \text{ if }\ep_1=+,\\q''\text{ if }\ep_1=-\end{cases}\!\!\!\!\!\!,\,\,
\psi(q)=\begin{cases}q'' \text{ if }\ep_1=+,\\a_1\text{ if }\ep_1=- \end{cases}\!\!\!\!\!\!,\,\,
\psi(a_1)=p''\text{ and }
\psi(e)=e\text{ for }e\in E\setminus\{a_1\}.
\]
\end{lemma}
\begin{proof}
We start proving that $\cM[p][q]\stackrel{\varphi}{\simeq}\cM[p'][q']$. For every cocircuit $C\in\co(\cM[p][q])$ we want to find a cocircuit $C'\in\co(\cM[p'][q'])$ with $C'(\varphi(a))=C(a)$ for all $a\in E\cup\{p,q\}$. That is $C'(p')=C(q)$, $C'(q')=C(p)$ and $C'(e)=C(e)$ for $e\in E$. Let $D$ be the restriction of $C$ to~$E$.

If $C(q)\neq 0$ and $C(p)\neq 0$, let $i$ be minimal with $D(a_i)\neq 0$. By construction, $C(p)=\ep_iD(a_i)$ and $C(q)=-C(p)=-\ep_iD(a_i)$. By the definition of $\cM[p'][q']$, there is a cocircuit $C'$ that expands $D$, with $C'(p')=-\ep_iD(a_i)=C(q)$ and $C'(q')=-C(p')=\ep_iD(a_i)=C(p)$.

To deal with the case when $C(q)= 0$ or $C(p)= 0$, we use Proposition~\ref{prop:allquotientsofle} to see that $\cM[p][q]/p{\simeq}\cM[p'][q']/q'$ and $\cM[p][q]/q{\simeq}\cM[p'][q']/p'$.

To prove that $\cM[p][q]\stackrel{\psi}{\simeq}\cM[p''][q'']$ we assume that $\ep_1=+$ (otherwise use~\eqref{eq:changepq} to exchange $p$ with $q$).
 In this case we prove that \(\cM[p]\simeq\cM[p'']\), which implies~\eqref{eq:changepa1} because when $p$ and $a_1$ are $(-1)$-inseparable the lexicographic extensions by $[p^-,a_1^{-},\dots,a_{r-1}^{-}]$ and $[a_1^-,p^-,\dots,a_{r-1}^{-}]$ coincide.

For every cocircuit $C\in\co(\cM[p])$ we want to find a cocircuit $C''\in\co(\cM[p''])$ with $C''(p'')=C(a_1)$, $C''(a_1)=C(p'')$ and $C''(e)=C(e)$ for $e\in E\setminus\{a_1\}$. Again, let $D$ be the restriction of $C$ to $E$.

If $C(p)\neq 0$ and $C(a_1)\neq 0$, then $C(p)=C(a_1)=D(a_1)$. Moreover, $D$ is also expanded to a cocircuit $C''$ of $\cM[p'']$ with $C''(p'')=C''(a_1)=D(a_1)$. For circuits with $C(a_1)=0$, observe that $\cM[p]/a_1=\cM[a_2^{\ep_2},\dots,a_r^{\ep_r}]\simeq\cM[p'']/p''$ by Proposition~\ref{prop:allquotientsofle}. Finally, if $C(p)= 0$ then, again by Proposition~\ref{prop:allquotientsofle}, $\cM[p]/p\simeq\cM[a_2^{-\ep_2},\dots,a_r^{-\ep_r}]=\cM[p'']/a_1$.\end{proof}

 With this lemma we have the last ingredient needed to prove that all the subpolytopes of a Gale sewn polytope are Gale sewn.

\smallskip
\noindent\textbf{Proposition \ref{prop:allquotientsofGalesewnareGalesewn}}
\emph{
 If $P$ is a neighborly polytope in $\cG$, and $a$ is a vertex of~$P$, then $Q=\conv(\verts(P)\setminus a)$ is also a neighborly polytope in $\cG$.
}
\begin{proof}
Let $\cP$ be the oriented matroid of $P$ and $e$ the element of $\cP$ corresponding to the vertex $a$. Observe that $\cP\setminus e$ is the oriented matroid of~$Q$.
The proof is by induction on the rank of $\cP$. When $\cP$ has rank $0$ then $\Gale\cP\simeq\{e_1, \dots, e_r, -\sum_{i=1}^r e_i\}$ and $\pGale{\cP\setminus e}\simeq \{e_1, \dots, e_r, -\sum_{i=1}^r e_i\}/e\simeq\{e_1, \dots, e_{r-1}, -\sum_{i=1}^{r-1} e_i\}$.

Otherwise, let $\cM=\Gale\cP$. That $\cP$ belongs to $\cG$ means that there is a matroid $\cN$ whose dual~$\Gale\cN$ is in $\cG$, such that $\cM=\cN[p][q]$ where $p=[a_1^{\ep_1},a_2^{\ep_2},\dots,a_r^{\ep_r}]$ and $q=[p^-,a_1^{-},\dots,a_{r-1}^{-}]$.

We will prove that for every $e\in\cP$, there is some $\tilde e\in \cN$ fulfilling
 \begin{equation}\label{eq:contracte}\pGale{\cP\setminus e}\simeq (\cN/ \tilde e)[\tilde p][\tilde q] =\pGale{\Gale\cN\setminus \tilde e}[\tilde p][\tilde q],\end{equation} 
where $\tilde p=[\tilde a_1^{\tilde \ep_1},\tilde a_2^{\tilde \ep_2},\dots,\tilde a_r^{\tilde \ep_r}]$ and $\tilde q=[\tilde p^-,\tilde a_1^{-},\dots,\tilde a_{r-1}^{-}]$ for some $\tilde a_i$'s and~$\tilde \ep_i$'s. Since $\rank\left( \Gale\cN\right)=\rank\left( \cP\right) -2$, by the induction hypothesis $(\Gale\cN\setminus e')\in \cG$ and our claim follows directly from \eqref{eq:contracte}. 

 If $e=q$, then by Lemma~\ref{lem:quotientsofGalesewnareGalesewn} we know that $\pGale{\cP\setminus e}=\left(\cN[p][q]\right)/q\simeq\left(\cN/a_1\right)[\tilde p][\tilde q]$, where $\tilde p=[a_2^{-\ep_1\ep_2},\dots,a_r^{-\ep_1\ep_r}]$ and $\tilde q=[\tilde p^{-},\dots,a_{r-1}^{-}]$. The case $e=p$ is analogous because of Lemma~\ref{lem:galesewingorder}.
 If $e=a_i$, then $\pGale{\cP\setminus e}\simeq\left(\cN/a_i\right)[\tilde p][\tilde q]$, 
where $\tilde p$ and $\tilde q$ have the same signature as $\tilde p$ and $\tilde q$ but omitting the element $a_i$.
 For the remaining elements $e$, $\pGale{\cP\setminus e}\simeq\left(\cN/e\right)[\tilde p][\tilde q]$.
\end{proof}

\bibliographystyle{plain}
\bibliography{ManyNeighborly} 

\begin{thebibliography}{10}

\bibitem{Alon1986}
Noga Alon.
\newblock {The number of polytopes, configurations and real matroids}.
\newblock {\em Mathematika}, 33(1):62--71, 1986.

\bibitem{Altshuler1977}
Amos Altshuler.
\newblock {Neighborly 4-polytopes and neighborly combinatorial 3-manifolds with
  ten vertices.}
\newblock {\em Can. J. Math.}, 29:225--420, 1977.

\bibitem{AltshulerSteinberg1973}
Amos Altshuler and Leon Steinberg.
\newblock {Neighborly 4-polytopes with 9 vertices.}
\newblock {\em J. Comb. Theory, Ser. A}, 15:270--287, 1973.

\bibitem{Barnette1981}
David Barnette.
\newblock {A family of neighborly polytopes.}
\newblock {\em Isr. J. Math.}, 39:127--140, 1981.

\bibitem{Bisztriczky2000}
Tibor Bisztriczky.
\newblock {On sewing neighbourly polytopes.}
\newblock {\em Note Mat.}, 20(1):73--80, 2000/01.

\bibitem{OrientedMatroids1993}
Anders Bj{\"o}rner, Michel {Las Vergnas}, Bernd Sturmfels, Neil White, and
  G{\"u}nter~M. Ziegler.
\newblock {\em {Oriented matroids.}}
\newblock {Encyclopedia of Mathematics and Its Applications. 46. Cambridge:
  Cambridge University Press. 516 p. }, 1993.

\bibitem{BokowskiGarms1987}
J{\"u}rgen Bokowski and Klaus Garms.
\newblock {Altshuler's sphere $M\sp{10}\sb{425}$ is not polytopal.}
\newblock {\em Eur. J. Comb.}, 8:227--229, 1987.

\bibitem{BokowskiShemer1987}
J{\"u}rgen Bokowski and Ido Shemer.
\newblock {Neighborly 6-polytopes with 10 vertices.}
\newblock {\em Isr. J. Math.}, 58:103--124, 1987.

\bibitem{CordovilDuchet1990}
Raul Cordovil and Pierre Duchet.
\newblock {On sign-invariance graphs of uniform oriented matroids.}
\newblock {\em Discrete Math.}, 79(3):251--257, 1990.

\bibitem{CordovilDuchet2000}
Raul Cordovil and Pierre Duchet.
\newblock {Cyclic polytopes and oriented matroids.}
\newblock {\em Eur. J. Comb.}, 21(1):49--64, 2000.

\bibitem{DeLoeraRambauSantosBOOK}
Jes{\'u}s~A. {De Loera}, J{\"o}rg Rambau, and Francisco Santos.
\newblock {\em {Triangulations}}, volume~25 of {\em {Algorithms and Computation
  in Mathematics}}.
\newblock Springer-Verlag, Berlin, 2010.
\newblock Structures for algorithms and applications.

\bibitem{Devyatov2011}
Rostislav~A. Devyatov.
\newblock {Neighborly polytopes with a small number of vertices}.
\newblock {\em Mat. Sb.}, 202(10):31--54, 2011.

\bibitem{Gale1963}
David Gale.
\newblock {Neighborly and cyclic polytopes}.
\newblock In {\em {Proc. {S}ympos. {P}ure {M}ath., {V}ol. {VII}}}, pages
  225--232. Amer. Math. Soc., Providence, R.I., 1963.

\bibitem{polymake}
Ewgenij Gawrilow and Michael Joswig.
\newblock {polymake: a Framework for Analyzing Convex Polytopes}.
\newblock In Gil Kalai and G{\"u}nter~M. Ziegler, editors, {\em {Polytopes ---
  Combinatorics and Computation}}, pages 43--74. Birkh{\"a}user, 2000.

\bibitem{GonskaPadrol}
Bernd Gonska and Arnau Padrol.
\newblock {Neighborly inscribed polytopes and {D}elaunay triangulations}.
\newblock Preprint. {\tt arXiv:1308.5798}, 2013.

\bibitem{GoodmanPollack1986}
Jacob~E. Goodman and Richard Pollack.
\newblock {Upper bounds for configurations and polytopes in {${\bf R}^d$}.}
\newblock {\em Discrete Comput. Geom.}, 1:219--227, 1986.

\bibitem{GruenbaumEtal2003}
Branko Gr{\"u}nbaum.
\newblock {\em {Convex polytopes}}, volume 221 of {\em {Graduate Texts in
  Mathematics}}.
\newblock Springer-Verlag, New York, second edition, 2003.
\newblock Prepared and with a preface by Volker Kaibel, Victor Klee and
  G{\"u}nter M. Ziegler.

\bibitem{Kortenkamp1997}
Ulrich~H. Kortenkamp.
\newblock {Every simplicial polytope with at most $d+4$ vertices is a quotient
  of a neighborly polytope.}
\newblock {\em Discrete Comput. Geom.}, 18(4):455--462, 1997.

\bibitem{LasVergnas1978}
Michel {Las Vergnas}.
\newblock {Extensions ponctuelles d'une g{\'e}om{\'e}trie combinatoire
  orient{\'e}e}.
\newblock In {\em {Probl{\`e}mes combinatoires et th{\'e}orie des graphes
  ({C}olloq. {I}nternat. {CNRS}, {U}niv. {O}rsay, {O}rsay, 1976)}}, volume 260
  of {\em {Colloq. Internat. CNRS}}, pages 265--270. CNRS, Paris, 1978.

\bibitem{Lee1991}
Carl~W. Lee.
\newblock {Regular triangulations of convex polytopes}.
\newblock In {\em {Applied geometry and discrete mathematics}}, volume~4 of
  {\em {DIMACS Ser. Discrete Math. Theoret. Comput. Sci.}}, pages 443--456.
  Amer. Math. Soc., Providence, RI, 1991.

\bibitem{LeeMenzel2010}
Carl~W. Lee and Matthew~M. Menzel.
\newblock {A generalized sewing construction for polytopes.}
\newblock {\em Isr. J. Math.}, 176:241--267, 2010.

\bibitem{McMullen1970}
Peter McMullen.
\newblock {The maximum numbers of faces of a convex polytope.}
\newblock {\em Mathematika, Lond.}, 17:179--184, 1970.

\bibitem{RichterGebertZiegler1994}
J{\"u}rgen Richter-Gebert and G{\"u}nter~M. Ziegler.
\newblock {Zonotopal tilings and the {B}ohne-{D}ress theorem}.
\newblock In {\em {Jerusalem combinatorics '93}}, volume 178 of {\em {Contemp.
  Math.}}, pages 211--232. Amer. Math. Soc., 1994.

\bibitem{Santos2002}
Francisco Santos.
\newblock {Triangulations of oriented matroids.}
\newblock {\em Mem. Am. Math. Soc.}, 741:80 p., 2002.

\bibitem{SanyalZiegler2010}
Raman Sanyal and G{\"u}nter~M. Ziegler.
\newblock {Construction and analysis of projected deformed products.}
\newblock {\em Discrete Comput. Geom.}, 43(2):412--435, 2010.

\bibitem{Shemer1982}
Ido Shemer.
\newblock {Neighborly polytopes.}
\newblock {\em Isr. J. Math.}, 43:291--314, 1982.

\bibitem{Sturmfels1988}
Bernd Sturmfels.
\newblock {Neighborly polytopes and oriented matroids.}
\newblock {\em Eur. J. Comb.}, 9(6):537--546, 1988.

\bibitem{Sturmfels1988b}
Bernd Sturmfels.
\newblock {Some applications of affine Gale diagrams to polytopes with few
  vertices.}
\newblock {\em SIAM J. Discrete Math.}, 1(1):121--133, 1988.

\bibitem{TrelfordVigh2011}
Ryan Trelford and Viktor Vigh.
\newblock {How to sew in practice?}
\newblock Preprint. {\tt arXiv:1102.4862v1}, 2011.

\bibitem{LecturesOnPolytopes1995}
G{\"u}nter~M. Ziegler.
\newblock {\em {Lectures on polytopes}}, volume 152 of {\em {Graduate Texts in
  Mathematics}}.
\newblock Springer-Verlag, New York, 1995.

\end{thebibliography}
\end{document}